\documentclass{amsart}

\usepackage{amsmath,latexsym,amssymb,amsthm,graphicx}
\usepackage{hyperref}
\usepackage[all]{xy}
\usepackage{enumitem}
\setenumerate[1]{label=(\thesection.\arabic*)}

\numberwithin{equation}{section}

\newtheorem{theorem}{Theorem}[section]
\newtheorem{lemma}[theorem]{Lemma}

\newtheorem{corollary}[theorem]{Corollary}
\newtheorem{proposition}[theorem]{Proposition}

\theoremstyle{definition}
\newtheorem{remark}[theorem]{Remark}
\newtheorem{remarks}[theorem]{Remarks}
\newtheorem{example}[theorem]{Example}

\def\N{\ensuremath{\mathbb{N}}}
\def\Z{\ensuremath{\mathbb{Z}}}

\def\R{\ensuremath{\mathbb{R}}}
\def\C{\ensuremath{\mathbb{C}}}

\newcommand{\pa}[1]{\left(#1\right)}
\newcommand{\cpa}[1]{\left\{#1\right\}}
\newcommand{\wt}[1]{\widetilde{#1}}
\newcommand{\tn}[1]{\textnormal{#1}}
\newcommand{\br}[1]{\left[#1\right]}

\newcommand{\cov}[1]{\tn{Cov}\pa{#1}}
\newcommand{\h}[1]{\tn{Hom}\pa{#1}}
\def\fa{f^{\ast}}
\def\fs{f_{\sharp}}
\newcommand{\bs}[1]{\pa{\uppercase{#1},{\lowercase{#1}}_{0}}}

\newcommand{\im}[1]{\tn{Im} \, #1}
\newcommand{\pr}[1]{\tn{pr}_{#1}}
\newcommand{\rest}[1]{\left.#1\right|}
\newcommand{\card}[1]{\left|#1\right|}

\def\cov{\tn{Cov}}
\def\bcov{\tn{BCov}}
\def\scov{\tn{SCov}}
\def\bscov{\tn{BSCov}}

\font\cuf=cmtt8
\newcommand{\curl}[1]{{\cuf #1}}

\begin{document}
\title{Topological pullback, covering spaces, and a triad of Quillen}

\author[J.S.~Calcut]{Jack S. Calcut}
\address{Department of Mathematics\\
         Oberlin College\\
         Oberlin, OH 44074}
\email{jcalcut@oberlin.edu}
\urladdr{\href{http://www.oberlin.edu/faculty/jcalcut/}{\curl{http://www.oberlin.edu/faculty/jcalcut/}}}

\author[J.D.~McCarthy]{John D. McCarthy}
\address{Department of Mathematics\\
                    Michigan State University\\
                    East Lansing, MI 48824-1027}
\email{mccarthy@math.msu.edu}
\urladdr{\href{http://www.math.msu.edu/~mccarthy/}{\curl{http://www.math.msu.edu/\textasciitilde mccarthy/}}}

\keywords{Pullback functor, fiber product, covering space, covering map, faithful, full, equivalence of categories.}
\subjclass[2010]{Primary: 57M10, 18A30; Secondary: 18A22, 55R10}
\date{May 14, 2012}

\begin{abstract}
We study pullback from a topological viewpoint with emphasis on pullback of covering maps.
We generalize a triad of Quillen on properties of the pullback functor.
\end{abstract}

\maketitle

\section{Introduction}\label{s:intro}

The \emph{pullback} operation---also called \emph{fiber product}---is useful in various settings including
vector bundles~\cite[pp.~97,~171]{hirsch}, fiber bundles~\cite[pp.~47--48]{steenrod}, schemes~\cite[pp.~87--90]{hartshorne}, and categories~\cite[pp.~44,~146,~228]{szamuely}.
We give a topological introduction to pullback with emphasis on pullback of covering maps.
As an application, we generalize a triad, observed by Quillen~\cite[pp.~114--116]{quillen}, of properties of the pullback functor.\\

Let $f:P\to\cpa{v}$ be a map of posets where $\cpa{v}$ is a singleton.
Let $\cov\pa{P}$ be the category of local systems on $P$.
Let \hbox{$\fa:\cov\pa{\cpa{v}}\to\cov\pa{P}$} denote the pullback functor. Quillen observed the following:

\begin{enumerate}\setcounter{enumi}{\value{equation}}
\item\label{q1} $P$ is $\pa{-1}$-connected (i.e., nonempty) if and only if $\fa$ is faithful.
\item\label{q2} $P$ is $0$-connected (i.e., nonempty and connected) if and only if $\fa$ is full and faithful.
\item\label{q3} $P$ is $1$-connected (i.e., $0$-connected and simply-connected) if and only if $\fa$ is an equivalence of categories.
\setcounter{equation}{\value{enumi}}
\end{enumerate}

For our generalization of Quillen's triad, let $f:X\to Y$ be a \textbf{map} (= continuous function) of topological spaces.
Let $\cov\pa{X}$ be the category of coverings of $X$.
Let $\fa:\cov\pa{Y}\to\cov\pa{X}$ denote the pullback functor.
Let $\Gamma\pa{X}$ denote the set of connected components of $X$.
Then:

\begin{enumerate}\setcounter{enumi}{\value{equation}}
\item\label{Q1} $\fs:\Gamma\pa{X}\to\Gamma\pa{Y}$ is surjective if and only if $\fa$ is faithful.
\item\label{Q2} $\fs:\pi_0\pa{X}\to\pi_0\pa{Y}$ is a bijection and $\fs:\pi_1\pa{X,x}\to\pi_1\pa{Y,f\pa{x}}$ is surjective for each $x\in X$ if and only if $\fa$ is full and faithful.
\item\label{Q3} $\fs:\pi_0\pa{X}\to\pi_0\pa{Y}$ is a bijection and $\fs:\pi_1\pa{X,x}\to\pi_1\pa{Y,f\pa{x}}$ is an isomorphism for each $x\in X$ if and only if $\fa$ is an equivalence of categories.
\setcounter{equation}{\value{enumi}}
\end{enumerate}

Equivalences~\ref{Q1}--\ref{Q3} generalize Quillen's triad in three ways:
(1) the target $Y$ is not required to be a singleton,
(2) spaces are much more general than posets or simplicial complexes,
and (3) we prove all three equivalences for four different categories of coverings.
Note that Quillen observed the poset analogue of~\ref{Q3} in~\cite[p.~116]{quillen}.\\

The equivalence~\ref{Q1} is proved in Proposition~\ref{gq1} and uses the hypothesis: $Y$ is locally connected.
If $Y$ is locally path-connected, then a convenient alternative generalization of~\ref{q1} is given in Corollary~\ref{gq1_pc}.
The equivalence~\ref{Q2} is proved in Proposition~\ref{gq2} and uses the hypotheses: $X$ and $Y$ are locally path-connected and $Y$ is semilocally simply-connected.
The equivalence~\ref{Q3} is proved in Proposition~\ref{gq3} and uses the additional hypothesis: $X$ is semilocally simply-connected.
We give examples to show that some of these key hypotheses may not be omitted.
For instance, Example~\ref{ha_ex} shows that the backward implication in~\ref{Q3} is false when $X$ is not semilocally simply-connected.
In this example, $X$ is the \emph{Harmonic archipelago}, an interesting space introduced by Bogley and Sieradski~\hbox{\cite[pp.~6--7]{bogley_sieradski}}
(see Figure~\ref{harm_arch} below).\\

Equivalences~\ref{Q1}--\ref{Q3} do not require spaces to be connected, locally simply-connected, nor even Hausdorff.
Furthermore, all three equivalences are proved for four categories of coverings, namely $\cov$, $\scov$, $\bcov$, and $\bscov$ (see Section~\ref{ss:catcov} for details).
The category $\cov\pa{X}$ has as objects all coverings of $X$, where empty fibers are permitted.
The category $\scov\pa{X}$ has as objects all surjective coverings of $X$.
The categories $\bcov\pa{X,x_0}$ and $\bscov\pa{X,x_0}$ are the based versions of $\cov\pa{X}$ and $\scov\pa{X}$ respectively.\\

When considering all coverings of a space $X$, it is natural to permit empty fibers for two reasons.
First, if $X$ is path-connected, locally path-connected, and semilocally simply-connected, then $\cov\pa{X}$ is equivalent to the category of $G$-sets where $G:=\pi_1\bs{X}$.
And, the empty set is a $G$-set.
Second, if $X$ is not connected, then the natural definition of covering map permits cardinalities of fibers to vary, so forbidding cardinality zero seems artificial.
Hatcher~\cite[p.~56]{hatcher} also adopts the convention that fibers may be empty, and does not require spaces to be Hausdorff.
We recommend~\cite[Ch.~1]{hatcher} as the ideal prerequisite to the present paper.
Spanier~\cite[Ch.~2]{spanier} is a classic and useful reference.
We also recommend M{\o}ller's notes~\cite{moller} and Bar-Natan's short, stimulating note~\cite{barnatan}.
We are not aware of a comprehensive, topological introduction to pullback in the literature.
One purpose of the present paper is to provide such an introduction.\\

In a follow up paper, we extend Quillen's triad in several new directions.
Let $f:X\to Y$ be a map of reasonably nice topological spaces.
Let $\fa:\cov\pa{Y}\to\cov\pa{X}$ denote the pullback functor.
We say $\fa$ has \textbf{nullity-zero} provided: if $\fa\pa{E}$ is trivial, then $E$ is trivial (trivial covers are defined in sections~\ref{ss:coverings} and~\ref{ss:catcov} below).
We give algebraic equivalents for $\fa$ to be essentially injective, to be essentially surjective, and to have nullity-zero.
Concerning essential injectivity, we prove a \emph{Tannakian-like} result~\cite{joyal_street}.
Namely, failure of $\fa$ to be essentially injective may be detected using only \emph{finite} component covers of $Y$ (but generally \emph{not} of $X$).
This holds for arbitrary fundamental groups of $X$ and $Y$, even infinite.
The finite fundamental group case utilizes Burnside rings and raises several open questions.\\

This paper is organized as follows.
Section~\ref{ss:pullback} defines pullback and presents some fundamental properties.
Sections~\ref{ss:coverings} and~\ref{ss:pbcover} discuss covering maps and pullback.
Section~\ref{ss:catcov} presents four categories of coverings.
Sections~\ref{ss:ducov} and~\ref{ss:pbducov} discuss disjoint union and pullback, including examples to show where care is necessary.
Section~\ref{s:gqt} uses the material from Section~\ref{s:pullback} to generalize Quillen's triad.\\

We close this introduction by recalling connections between posets and simplicial complexes.
This material is not used below, it merely explains how Quillen's triad is translated to a topological setting.
To each poset $P$ one associates the \emph{order complex} $\card{P}$ of $P$, namely the simplicial complex with vertex set $P$ and a simplex for each nonempty, finite chain (= totally ordered subset) in $P$.
This association permits topological properties to be attributed to posets~\cite[p.~103]{quillen}.
To each simplicial complex $K$ one associates the \emph{face poset} $S(K)$ of $K$, namely the poset of nonempty simplices in $K$ partially ordered by inclusion.
Although simple examples show not every simplicial complex arises as an order complex (three vertices suffice) and vice versa, the two associations are intimately related: $\card{S(K)}$ is the barycentric subdivision of $K$.
In addition, there is an equivalence between the categories $\cov\pa{P}$ and $\cov(\card{P})$ (see~\cite[\S 1,7]{quillen} and~\cite[App.~I]{gabriel_zisman}).
Therefore, Quillen's triad is equivalent to the analogous results for a simplicial map $f:K\to\cpa{v}$, and~\ref{Q1}--\ref{Q3} indeed generalize~\ref{q1}--\ref{q3}.

\section{Pullback, Coverings, and Disjoint Union}\label{s:pullback}

\subsection{Pullback}\label{ss:pullback}

Let a diagram of maps of topological spaces be given:
\begin{equation}\label{givens}\begin{split}
\xymatrix{
    							&	Z	\ar[d]^-{g}\\
    X	\ar[r]^{f}	&	Y}
\end{split}\end{equation}
Consider the following diagram (noncommutative in general):
\begin{equation}\label{noncomm}\begin{split}
\xymatrix{
    X\times Z	\ar[r]^-{\pr{2}}	\ar[d]_{\pr{1}}	&	Z	\ar[d]^{g}\\
    X  							\ar[r]^-{f}     																& Y }
\end{split}\end{equation}
where $\pr{1}$ and $\pr{2}$ are the coordinate projections.
The \textbf{pullback} of $g$ along $f$ consists of the subspace:
\begin{equation}\label{pullback_set}
	\fa\pa{Z} := \cpa{ \pa{x,z}\in X\times Z \mid f(x)=g(z) }\subset X\times Z
\end{equation}
and the commutative diagram:
\begin{equation}\label{pullback}\begin{split}
\xymatrix{
    \fa\pa{Z}	\ar[r]^-{\wt{f}}	\ar[d]_{\fa(g)}	&	Z	\ar[d]^{g}\\
    X  							\ar[r]^-{f}     																& Y }
\end{split}\end{equation}
Here, $\fa(g)$ and $\wt{f}$ are continuous, being the restrictions to $\fa\pa{Z}$ of $\pr{1}$ and $\pr{2}$.
Note that $\fa\pa{Z}$ is the equalizer\footnote{The \textbf{equalizer} of two functions $h,k:A\to B$ is the set of $a\in A$ such that $h(a)=k(a)$.} of the maps $f\circ\pr{1}$ and $g\circ\pr{2}$, and is the largest subset of $X\times Z$ on which~\eqref{noncomm} commutes.\\

If~\eqref{givens} happens to be based, say with $f(x_0)=g(z_0)$, then~\eqref{pullback} is naturally based with $(x_0,z_0)\in\fa\pa{Z}$.
If $x\in X$ and $y:=f(x)$, then in~\eqref{pullback} we have:
\begin{enumerate}\setcounter{enumi}{\value{equation}}
\item\label{vert_fiber} The fiber over $x$ equals $\cpa{x}\times g^{-1}\pa{y}$ (possibly empty).
\item\label{bij_vert_fibers} The map $\wt{f}$ restricts to a homeomorphism of fibers $\cpa{x}\times g^{-1}\pa{y}\to g^{-1}(y)$.
\setcounter{equation}{\value{enumi}}
\end{enumerate}

The pullback~\eqref{pullback} satisfies a well known and easily verified universal property. Namely, if $Q$, $q_1$, and $q_2$ are given so that the following diagram commutes:
\begin{equation}\label{uppb}\begin{split}
\xymatrix{
	Q	\ar@/^/[rrd]^{q_2}	\ar@{-->}[rd]_{\mu}	\ar@/_/[rdd]_{q_1} &	&\\
		&	\fa\pa{Z} \ar[d]	\ar[r]	&	Z	\ar[d]\\
		&	X	\ar[r]	&	Y}
\end{split}\end{equation}
then there exists a unique map $\mu:Q\to\fa\pa{Z}$ making the entire diagram~\eqref{uppb} commute. Evidently, $\mu(c)=\pa{q_1(c),q_2(c)}$.

\begin{example}
Let $Y=S^1\subset\C$, let $X=\cpa{1}\subset S^1$, and let $f:X\to Y$ be inclusion.
If $g:\R\to Y$ is the universal covering $t\mapsto\exp\pa{2\pi it}$, then $\fa\pa{\R}$ is a copy of $\Z$.
Hence, pullback yields disconnected coverings straightaway.
\end{example}

\begin{remark}
There is an obvious symmetry in the definition of pullback.
One could just as well pullback $f$ along $g$, and $g^{\ast}\pa{X}$ is canonically homeomorphic to $\fa\pa{Z}$ by the map $\pa{z,x}\mapsto\pa{x,z}$.
For this reason, $\fa\pa{Z}$ is sometimes denoted $X\times_{Y} Z$ in the literature and is sometimes called the fiber product of $X$ and $Z$ over $Y$.
We are mainly interested in pulling back arbitrary coverings of $Y$ along a fixed map $f:X\to Y$, so we stick to the somewhat asymmetric $\fa$ notation.
Still, the aforementioned symmetry is useful.
For instance, by symmetry, properties~\ref{vert_fiber} and~\ref{bij_vert_fibers} have obvious analogues for horizontal fibers.
It follows immediately that if $f$ is injective, then $\wt{f}$ is injective.
And, again by symmetry, if $g$ is injective, then $\fa\pa{g}$ is injective.
\end{remark}

\begin{lemma}\label{inclusion}
Let a pullback diagram~\eqref{pullback} be given.
Then, $\im{\wt{f}}=g^{-1}\pa{\im{f}}$ and $\im{\fa\pa{g}}=f^{-1}\pa{\im{g}}$.
If $f$ is inclusion, then $\wt{f}$ is an embedding.
If $f$ is a homeomorphism, then $\wt{f}$ is a homeomorphism.
\end{lemma}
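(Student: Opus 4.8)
The plan is to read everything off the explicit description of $\fa(Z)$ as the equalizer $\{(x,z)\in X\times Z : f(x)=g(z)\}$ together with the two coordinate projections. I would first establish the image formulas. For $\im\wt f = g^{-1}(\im f)$: a point $z\in Z$ lies in $\im\wt f$ iff there exists $x\in X$ with $(x,z)\in\fa(Z)$, i.e. with $f(x)=g(z)$; such an $x$ exists iff $g(z)\in\im f$, i.e. iff $z\in g^{-1}(\im f)$. The formula $\im{\fa(g)} = f^{-1}(\im g)$ is the mirror statement and follows by the symmetry $\fa(Z)\cong g^{\ast}(X)$ noted in the preceding remark (or by the identical direct argument). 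This first step is essentially a one-line set-theoretic computation.

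Next I would handle the case where $f$ is an inclusion, say $X\subset Y$ with $f$ the inclusion map. Here I want to show $\wt f:\fa(Z)\to Z$ is an embedding, i.e. a homeomorphism onto its image with the subspace topology. Injectivity of $\wt f$ is already recorded in the remark (since $f$ injective implies $\wt f$ injective); concretely, if $f$ is the inclusion then $(x,z)\in\fa(Z)$ forces $x=g(z)\in X$, so $z$ determines the pair. By the image formula, $\im\wt f = g^{-1}(X)$ with the subspace topology from $Z$. The map $z\mapsto (g(z),z)$ is a continuous section of $\pr 2$ restricted appropriately, and it is a two-sided inverse to $\wt f$ on $g^{-1}(X)$; continuity of $z\mapsto(g(z),z)$ into $X\times Z$ (hence into $\fa(Z)$ by the universal property, or just as a map into the subspace) is immediate since both coordinates are continuous. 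Thus $\wt f$ is a continuous bijection onto $g^{-1}(X)$ with continuous inverse, hence an embedding.

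Finally, if $f$ is a homeomorphism, then in particular $f$ is an inclusion-up-to-homeomorphism of $Y$ onto itself, but more directly: $\im f = Y$, so by the image formula $\im\wt f = g^{-1}(Y) = Z$, giving surjectivity; injectivity again comes from the remark; and continuity of the inverse follows because $\wt f^{-1}$ is $z\mapsto (f^{-1}(g(z)),z)$, both coordinates continuous, landing in $\fa(Z)$. Hence $\wt f$ is a homeomorphism. Alternatively one can invoke the universal property: the maps $f^{-1}\circ g:Z\to X$ and $\mathrm{id}_Z:Z\to Z$ agree after composing with $f$ and $g$ respectively, so they induce $\mu:Z\to\fa(Z)$, and one checks $\mu$ and $\wt f$ are mutually inverse.

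I do not anticipate a serious obstacle here; the statement is a collection of routine consequences of the equalizer description and the symmetry remark. The only point requiring a little care is bookkeeping the subspace topologies in the embedding claim — one must verify that the continuous inverse $z\mapsto(g(z),z)$ genuinely maps $g^{-1}(X)$ into $\fa(Z)$ and is continuous as a map of subspaces — but this is immediate from the definition of the product and subspace topologies, so no genuine difficulty arises.
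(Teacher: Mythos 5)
Your proposal is correct and follows essentially the same route as the paper: the image formulas are the same direct set-theoretic check, and for both the inclusion and homeomorphism cases the paper likewise exhibits the explicit continuous inverses $z\mapsto\pa{g\pa{z},z}$ and $z\mapsto\pa{f^{-1}g\pa{z},z}$.
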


\begin{proof}[Proof of Lemma~\ref{inclusion}]
If $f$ is inclusion, then restricting the codomain of $\wt{f}$ to $\im{\wt{f}}$ yields the bijective map $\fa\pa{Z}\to \im{\wt{f}}$, whose inverse map is $z\mapsto\pa{g\pa{z},z}$.
If $f$ is a homeomorphism, then $\wt{f}$ is a bijective map with inverse map $z\mapsto\pa{f^{-1} g\pa{z},z}$.
\end{proof}

The next lemma says that the pullback of a pullback is naturally homeomorphic to the pullback along the composition (cf.~\cite[p.~49]{steenrod}).

\begin{lemma}\label{pullback_comp}
If a diagram of maps is given:
\begin{equation}\label{comp_given}\begin{split}
\xymatrix{
    							&								&	Z	\ar[d]^-{g}\\
    W	\ar[r]^{h}	&	X	\ar[r]^{f}	&	Y}
\end{split}\end{equation}
then the map $\mu:h^{\ast}\pa{\fa\pa{Z}}\to \pa{fh}^{\ast}\pa{Z}$
given by $\pa{w,\pa{x,z}}\mapsto\pa{w,z}$ is a homeomorphism,
and the following diagram commutes:
\begin{equation}\label{comp_conclusion}\begin{split}
\xymatrix{
	\pa{fh}^{\ast}\pa{Z}	\ar@/^/[rrrd]^{\wt{fh}}	\ar@/_/[rdd]_{\pa{fh}^{\ast}\pa{g}} &	&\\
		&	h^{\ast}\pa{\fa\pa{Z}}	\ar[r]	\ar[d]^{h^{\ast}\pa{\fa\pa{g}}}	\ar[lu]^{\mu}	&	\fa\pa{Z} \ar[d]^{\fa\pa{g}}	\ar[r]	&	Z	\ar[d]^{g}\\
		&	W	\ar[r]^{h}	&	X	\ar[r]^{f}	&	Y}
\end{split}\end{equation}
\end{lemma}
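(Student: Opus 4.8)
The plan is to unwind the three pullback sets explicitly, exhibit a continuous inverse of $\mu$, and then read off commutativity of~\eqref{comp_conclusion} piece by piece. From~\eqref{pullback_set}, and using that $\fa\pa{g}$ is the restriction of $\pr{1}$, one gets
\[
h^{\ast}\pa{\fa\pa{Z}}=\cpa{\,\pa{w,\pa{x,z}}\in W\times\pa{X\times Z}\ \mid\ h(w)=x\ \text{and}\ f(x)=g(z)\,},
\]
while $\pa{fh}^{\ast}\pa{Z}=\cpa{\,\pa{w,z}\in W\times Z\ \mid\ f(h(w))=g(z)\,}$. On $h^{\ast}\pa{\fa\pa{Z}}$ the middle entry is redundant ($x=h(w)$), so $\mu\pa{w,\pa{x,z}}=\pa{w,z}$ is well defined, since $f(h(w))=f(x)=g(z)$ places $\pa{w,z}$ in $\pa{fh}^{\ast}\pa{Z}$. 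Continuity of $\mu$ is immediate: $\mu$ is the restriction to $h^{\ast}\pa{\fa\pa{Z}}$ of the continuous map $W\times\fa\pa{Z}\to W\times Z$, $\pa{w,p}\mapsto\pa{w,\wt{f}(p)}$, where $\wt{f}\colon\fa\pa{Z}\to Z$ is the upper map of the pullback square~\eqref{pullback}.

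For the inverse I would take $\nu\colon\pa{fh}^{\ast}\pa{Z}\to h^{\ast}\pa{\fa\pa{Z}}$, $\pa{w,z}\mapsto\pa{w,\pa{h(w),z}}$, and check set-theoretically that $\mu\circ\nu=\mathrm{id}$ and $\nu\circ\mu=\mathrm{id}$ (the latter using $x=h(w)$ on the domain of $\mu$). To see that $\nu$ is continuous I would invoke the universal property~\eqref{uppb}: the commutativity of the pullback square defining $\pa{fh}^{\ast}\pa{Z}$, namely $f\circ h\circ\pa{fh}^{\ast}\pa{g}=g\circ\wt{fh}$, factors by~\eqref{uppb} for $\fa\pa{Z}$ through a unique continuous map $\pa{fh}^{\ast}\pa{Z}\to\fa\pa{Z}$, which is $\pa{w,z}\mapsto\pa{h(w),z}$; this map together with $\pa{fh}^{\ast}\pa{g}\colon\pa{fh}^{\ast}\pa{Z}\to W$ is compatible over $X$, so by~\eqref{uppb} for $h^{\ast}\pa{\fa\pa{Z}}$ it factors uniquely and continuously through $h^{\ast}\pa{\fa\pa{Z}}$, and the factoring map is exactly $\nu$. (Alternatively, $\nu$ is visibly the restriction of the continuous map $W\times Z\to W\times\pa{X\times Z}$, $\pa{w,z}\mapsto\pa{w,\pa{h(w),z}}$.) Hence $\mu$ is a homeomorphism.

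It then remains to check that~\eqref{comp_conclusion} commutes. The two lower squares are the defining pullback squares of $\fa\pa{Z}$ and of $h^{\ast}\pa{\fa\pa{Z}}$, so they commute. The lower curved triangle asserts $\pa{fh}^{\ast}\pa{g}\circ\mu=h^{\ast}\pa{\fa\pa{g}}$; both send $\pa{w,\pa{x,z}}$ to $w$. The upper curved region asserts that $\wt{fh}\circ\mu$ equals the composite $h^{\ast}\pa{\fa\pa{Z}}\to\fa\pa{Z}\xrightarrow{\wt{f}}Z$; both send $\pa{w,\pa{x,z}}$ to $z$. I do not expect a genuine obstacle: the only care needed is bookkeeping with the nested coordinates and consistent use of the relation $x=h(w)$ imposed by the inner pullback, and no step is harder than the restriction-of-projections and inverse-map arguments already used for Lemma~\ref{inclusion}.
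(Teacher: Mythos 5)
Your proposal is correct and follows essentially the paper's own route: the paper obtains $\mu$ from the universal property of pullback (which yields continuity and the commutativity of~\eqref{comp_conclusion} at once) and then simply notes that $\pa{w,z}\mapsto\pa{w,\pa{h\pa{w},z}}$ is the inverse, exactly your $\nu$. Your extra verifications (continuity as restrictions of projections, coordinatewise commutativity checks) are just a more explicit write-up of the same argument.
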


\begin{proof}[Proof of Lemma~\ref{pullback_comp}]
Apply the universal property of pullback to get $\mu$, then note that the inverse map of $\mu$ is $\pa{w,z}\mapsto\pa{w,\pa{h\pa{w},z}}$.
\end{proof}

Any embedding factors as a homeomorphism followed by an inclusion, so Lemmas~\ref{pullback_comp} and~\ref{inclusion} immediately yield the following.

\begin{corollary}\label{embedding}
If $f$ is an embedding in~\eqref{pullback}, then $\wt{f}$ is an embedding.
\end{corollary}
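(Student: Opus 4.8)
The plan is to reduce the statement to the two special cases already settled in Lemma~\ref{inclusion}, using the factorization of an embedding as a homeomorphism followed by an inclusion. First I would write $f=i\circ\phi$, where $\phi:X\to f(X)$ is $f$ with codomain restricted to its image (a homeomorphism, since $f$ is an embedding) and $i:f(X)\hookrightarrow Y$ is the inclusion of the image equipped with the subspace topology.

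Next I would specialize Lemma~\ref{pullback_comp} to the composition $X\xrightarrow{\phi}f(X)\xrightarrow{i}Y$; that is, take $h:=\phi$ and take the map called ``$f$'' there to be $i$. This produces a homeomorphism $\mu:\phi^{\ast}\pa{i^{\ast}\pa{Z}}\to\pa{i\phi}^{\ast}\pa{Z}=\fa\pa{Z}$, and the commuting triangle at the top of~\eqref{comp_conclusion} records the identity $\wt{f}=\wt{i}\circ\wt{\phi}\circ\mu^{-1}$, where $\wt{\phi}:\phi^{\ast}\pa{i^{\ast}\pa{Z}}\to i^{\ast}\pa{Z}$ is the horizontal map of the pullback of $i^{\ast}\pa{Z}$ along $\phi$, and $\wt{i}:i^{\ast}\pa{Z}\to Z$ is the horizontal map of the pullback of $g$ along $i$.

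Finally I would identify each of the three factors. The map $\mu^{-1}$ is a homeomorphism by Lemma~\ref{pullback_comp}; the map $\wt{\phi}$ is a homeomorphism by Lemma~\ref{inclusion}, since $\phi$ is a homeomorphism; and the map $\wt{i}$ is an embedding by Lemma~\ref{inclusion}, since $i$ is an inclusion. A composition of two homeomorphisms followed by an embedding is an embedding, so $\wt{f}$ is an embedding, as desired.

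I do not anticipate a genuine obstacle here: the only point demanding attention is the careful renaming of the data when specializing Lemma~\ref{pullback_comp}, together with the observation that the top triangle of~\eqref{comp_conclusion} is precisely what exhibits $\wt{f}$ as the stated composite. This is exactly why the result is presented as a corollary rather than proved from scratch.
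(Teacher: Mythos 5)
Your proposal is correct and is exactly the paper's argument: the paper also factors the embedding $f$ as a homeomorphism followed by an inclusion and cites Lemmas~\ref{pullback_comp} and~\ref{inclusion}, while you have simply written out the resulting factorization $\wt{f}=\wt{i}\circ\wt{\phi}\circ\mu^{-1}$ explicitly. No gaps.
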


\subsection{Coverings}\label{ss:coverings}

A \textbf{fiber bundle projection} is a map $p:E\to Y$ satisfying: for each $y\in Y$ there exists an open neighborhood $U$ of $y$ in $Y$, a space $F$, and a homeomorphism $\varphi:p^{-1}\pa{U}\to	U\times F$ such that the following diagram commutes:
\begin{equation}\begin{split}\label{localtrivialization}
\xymatrix{
    p^{-1}\pa{U}	\ar[r]^-{\varphi}	\ar[d]_{\rest{p}}	&	U\times F	\ar[dl]^{\pr{1}}\\
    U }
\end{split}\end{equation}
The data $\pa{U,F,\varphi}$ is a \textbf{local trivialization} of $p$ at $y\in Y$, $U$ is an \textbf{evenly covered neighborhood} of $y$ in $Y$, and $F$ is a \textbf{fiber}.
We allow fibers to be empty.
If $y\in Y$ and $\pa{U,F,\varphi}$ is any local trivialization of $p$ at $y$, then the fiber $p^{-1}\pa{y}\subset E$ is homeomorphic to $F$.
For any fixed space $F$, the set of points in $Y$ with fiber homeomorphic to $F$ is open and closed in $Y$.
Hence:
\begin{enumerate}\setcounter{enumi}{\value{equation}}
\item\label{fibers_homeo} Over each component of $Y$, fibers of $p$ are homeomorphic.
\setcounter{equation}{\value{enumi}}
\end{enumerate}

A \textbf{covering map} is a fiber bundle projection with all fibers discrete.
If \hbox{$p:E\to Y$} is a covering map and $\pa{U,F,\varphi}$ is a local trivialization of $p$ at some point $y\in Y$, then for each $d\in F$ the set $V:=\varphi^{-1}\pa{U\times\cpa{d}}$ is open in $E$ and $\rest{p}V:V\to U$ is a homeomorphism.
It follows that each covering map is a local homeomorphism and is open.
Covering maps need not be closed: consider $\R\to S^1$, given by $t\mapsto\exp\pa{2\pi it}$, and $\cpa{n+\frac{1}{n+1}\mid n\in\N}\subset\R$.\\

Given fiber bundle projections (or covering maps) $p_1:E_{1}\to Y$ and $p_2:E_2\to Y$, a \textbf{morphism} $t:p_1\to p_2$ is a map $t:E_1\to E_2$ such that the following diagram commutes:
\begin{equation}\begin{split}\label{morphismofcovers}
\xymatrix{
    E_{1}	\ar[rr]^-{t}	\ar[dr]_{p_{1}}	&	&	E_{2}	\ar[dl]^{p_{2}}\\
    &	Y }
\end{split}\end{equation}
A morphism $t$ as in~\eqref{morphismofcovers} is an \textbf{isomorphism} provided there exists a morphism $s:p_2\to p_1$ such that
$s\circ t=\tn{id}_{E_{1}}$ and $t\circ s=\tn{id}_{E_{2}}$.
Plainly, a morphism $t$ as in~\eqref{morphismofcovers} is an isomorphism if and only if $t:E_1\to E_2$ is a homeomorphism.
Given covering maps $p_1:E_{1}\to Y$ and $p_2:E_2\to Y$,
we write $p_1\cong p_2$ or $E_1\cong E_2$ to mean there exists an isomorphism $t:p_1\to p_2$.\\

A covering map $p:E\to Y$ is \textbf{trivial} provided there exists a discrete space $D$ (possibly empty) and an isomorphism
$t:p\to\pr{1}$ where $\pr{1}:Y\times D \to Y$.
If $Y$ is nonempty, then $Y$ has infinitely many isomorphism classes of trivial covers, one for each cardinal number.

\begin{remarks}
Covering maps, and morphisms between them, may well fail to be surjective.
The empty covering of $Y\neq\emptyset$ is not surjective.
If $Y$ is disconnected (and locally connected, say), then fibers of a single cover of $Y$ may have varying cardinalities (zero included).
For morphisms, let $Y=S^1$ and consider the obvious trivial covers $E_1=S^1\times\cpa{1}$ and $E_2=S^1\times\cpa{1,2}$. Then, inclusion $E_1\to E_2$ is a morphism, but is not surjective.
\end{remarks}

\begin{lemma}\label{restcov}
Let $p:E\to Y$ be a covering map.
Let $A\subset Y$.
Then, the restriction $\rest{p}:p^{-1}\pa{A}\to A$ is a covering map.
\end{lemma}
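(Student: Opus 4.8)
The plan is to verify the defining property of a covering map directly and locally over $A$. Fix $a\in A$, and let $\pa{U,F,\varphi}$ be a local trivialization of $p$ at $a$, so that $U$ is an open neighborhood of $a$ in $Y$, $F$ is discrete, $\varphi:p^{-1}\pa{U}\to U\times F$ is a homeomorphism, and $\pr{1}\circ\varphi=\rest{p}$ on $p^{-1}\pa{U}$. Put $V:=U\cap A$, an open neighborhood of $a$ in $A$. I claim that $\pa{V,F,\rest{\varphi}}$, where $\rest{\varphi}$ is the restriction of $\varphi$ to $p^{-1}\pa{V}$, is a local trivialization of $\rest{p}:p^{-1}\pa{A}\to A$ at $a$. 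Since $F$ is still discrete and $a\in A$ is arbitrary, and since the fibers of $\rest{p}$ are fibers of $p$ (hence discrete), this exhibits $\rest{p}$ as a covering map.

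The only point needing (routine) checking is that $\varphi$ carries $p^{-1}\pa{V}$ homeomorphically onto $V\times F$, compatibly with the first projections. This follows from the identity $\pr{1}\circ\varphi=\rest{p}$: for $e\in p^{-1}\pa{U}$ we have $e\in p^{-1}\pa{A}$ iff $p\pa{e}\in A$ iff $\pr{1}\pa{\varphi\pa{e}}\in V$ iff $\varphi\pa{e}\in V\times F$, so that $\varphi^{-1}\pa{V\times F}=p^{-1}\pa{V}$ and $\varphi\pa{p^{-1}\pa{V}}=V\times F$. A homeomorphism restricts to a homeomorphism between any subset and its image, the subspace topologies here all being compatible (the subspace topology on $p^{-1}\pa{V}$ from $p^{-1}\pa{U}$, from $E$, and from $p^{-1}\pa{A}$ coincide, and similarly for $V\times F$ inside $U\times F$ and $A\times F$). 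Commutativity with $\pr{1}$ is inherited from the original trivialization.

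I do not expect any genuine obstacle here; the lemma is bookkeeping with restrictions and subspace topologies. One could alternatively note that $\rest{p}:p^{-1}\pa{A}\to A$ is, via Lemma~\ref{inclusion}, identified with the pullback of $p$ along the inclusion $A\hookrightarrow Y$, and then appeal to stability of covering maps under pullback; but since that stability is taken up separately, the direct argument above is preferable and self-contained at this stage.
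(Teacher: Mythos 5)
Your proof is correct and is essentially the same as the paper's: both take a local trivialization $\pa{U,F,\varphi}$ of $p$ at the given point of $A$, set $V:=U\cap A$, and observe that restricting $\varphi$ to $p^{-1}\pa{V}$ gives a local trivialization of $\rest{p}$ at that point. The paper simply leaves the bookkeeping (that $\varphi\pa{p^{-1}\pa{V}}=V\times F$ and that subspace topologies agree) unstated, which you verify explicitly.
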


\begin{proof}[Proof of Lemma~\ref{restcov}]
Let $y\in A$.
Let $\pa{U,F,\varphi}$ be a local triviliazation of $p$ at $y\in Y$.
Let $V:=U\cap A$.
Then, $\pa{V,F,\rest{\varphi}p^{-1}\pa{V}}$ is a local trivialization of $\rest{p}$ at $y$.
\end{proof}

\begin{lemma}\label{morphismiscover}
Let $t:p_1\to p_2$ be a morphism of covering maps as in~\eqref{morphismofcovers}. If $Y$ is locally connected, then $t$ is a covering map. 
\end{lemma}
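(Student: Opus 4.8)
The plan is to check the definition of covering map directly at each point of $E_2$. Fix $e_2\in E_2$ and put $y:=p_2\pa{e_2}$. Since $p_1$ and $p_2$ are covering maps, $y$ has an open neighborhood $U_1$ evenly covered for $p_1$ and an open neighborhood $U_2$ evenly covered for $p_2$; restricting the two local trivializations shows that every subset of $U_1\cap U_2$ is itself evenly covered for both maps, with the same fibers (cf.\ Lemma~\ref{restcov}). This is exactly where local connectedness of $Y$ is used: the connected component $U$ of $y$ in $U_1\cap U_2$ is open, so $U$ is a \emph{connected} open neighborhood of $y$ that is evenly covered for both $p_1$ and $p_2$.

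Over a connected evenly covered open set the sheets of a covering are precisely the connected components of the preimage, so I may write $p_2^{-1}\pa{U}=\bigsqcup_{\alpha}V_\alpha$ and $p_1^{-1}\pa{U}=\bigsqcup_{\beta}V'_\beta$, with each $V_\alpha$ (resp.\ $V'_\beta$) open, connected, and carried homeomorphically onto $U$ by $p_2$ (resp.\ $p_1$). Let $V$ be the sheet of $p_2^{-1}\pa{U}$ containing $e_2$; this will be the evenly covered neighborhood of $e_2$ for $t$. Because $p_2\circ t=p_1$ we have $t^{-1}\pa{p_2^{-1}\pa{U}}=p_1^{-1}\pa{U}$, and $t$ sends each connected $V'_\beta$ into $p_2^{-1}\pa{U}$, hence into a single sheet; call it $V_{\alpha(\beta)}$. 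Therefore $t^{-1}\pa{V}=\bigsqcup_{\beta\in B}V'_\beta$, where $B:=\cpa{\beta\mid V_{\alpha(\beta)}=V}$; here $B$ may be empty, which is precisely the situation $e_2\notin\im{t}$. For $\beta\in B$, restricting $p_2\circ t=p_1$ to $V'_\beta$ gives $\pa{p_2|_V}\circ\pa{t|_{V'_\beta}}=p_1|_{V'_\beta}$, and since $p_2|_V$ and $p_1|_{V'_\beta}$ are homeomorphisms onto $U$, it follows that $t|_{V'_\beta}\colon V'_\beta\to V$ is a homeomorphism.

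To conclude, give $D:=B$ the discrete topology and define $\psi\colon t^{-1}\pa{V}\to V\times D$ by $\psi\pa{e}=\pa{t\pa{e},\beta}$ for $e\in V'_\beta$. On each open sheet $V'_\beta$ the map $\psi$ equals $e\mapsto\pa{t|_{V'_\beta}\pa{e},\beta}$, hence is continuous; its inverse, sending $\pa{v,\beta}$ to $\bigl(t|_{V'_\beta}\bigr)^{-1}\pa{v}$, is continuous on each open slice $V\times\cpa{\beta}$ of $V\times D$; so $\psi$ is a homeomorphism and it plainly commutes with the projections to $V$. Thus $\pa{V,D,\psi}$ is a local trivialization of $t$ at $e_2$ with discrete fiber $D$, and since $e_2$ was arbitrary, $t$ is a fiber bundle projection with discrete fibers, i.e., a covering map.

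The one substantive step, and the sole place the hypothesis enters, is the reduction to a \emph{connected} evenly covered $U$: if $Y$ is not locally connected then $y$ may have no connected open evenly covered neighborhood, the component $V$ of $e_2$ in $p_2^{-1}\pa{U}$ need not be open in $E_2$, and the construction collapses (the conclusion genuinely fails in that generality). Everything else is routine---the continuous image of a connected set lies in one sheet, and homeomorphisms glue over a disjoint family of open sets.
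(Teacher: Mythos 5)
Your proof is correct and takes essentially the same route as the paper's: both use local connectedness to replace $U_1\cap U_2$ by its (open) connected component $U$ containing $y$, observe that $t$ carries each sheet of $p_1^{-1}\pa{U}$ into a single sheet of $p_2^{-1}\pa{U}$ (the paper phrases this via the conjugated map $s\pa{y,d}=\pa{y,\sigma\pa{d}}$, you via connectedness of sheets versus components), and then trivialize $t$ over the sheet $V$ containing the given point with fiber the set of $p_1$-sheets landing in $V$, exactly the paper's $\sigma^{-1}\pa{d'}$. The only nitpick is your parenthetical that $B=\emptyset$ is ``precisely'' the case $e_2\notin\im{t}$ --- it is precisely the case $V\cap\im{t}=\emptyset$ --- but this is harmless since empty fibers are permitted.
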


\begin{proof}[Proof of Lemma~\ref{morphismiscover}]
Fix $e' \in E_2$ and let $y':=p_2\pa{e'}$.
Let $\pa{U_i,F_i,\varphi_i}$ be a local trivialization of $p_i$ at $y'$ for each $i\in\cpa{1,2}$.
Let $U$ be the connected component of $U_1\cap U_2$ containing $y'$, which is open in $Y$ since $Y$ is locally connected.
By restriction, we have local trivializations $\pa{U,F_i,\psi_i}$ of $p_i$ at $y'$ for each $i\in\cpa{1,2}$.
The following diagram commutes:
\begin{equation}\label{morphcoverdiag}\begin{split}
\xymatrix{
    U\times F_1	\ar[drr]_{\pr{1}}	&	\ar[l]_{\psi_{1}}	p_{1}^{-1}\pa{U}	\ar[rr]^-{\rest{t}}	\ar[dr]^-{\rest{p_{1}}}	&	&
    			p_{2}^{-1}\pa{U}	\ar[dl]_-{\rest{p_{2}}}	\ar[r]^{\psi_{2}}	&	U\times F_2	\ar[dll]^{\pr{1}}\\
    &	&	U }
\end{split}\end{equation}
Define $s:U\times F_1 \to U\times F_2$ by $s:=\psi_{2}\circ \rest{t}\circ \psi_{1}^{-1}$.
As $U$ is connected, $F_1$ and $F_2$ are discrete, and~\eqref{morphcoverdiag} commutes, we see that $s\pa{y,d}=\pa{y,\sigma\pa{d}}$ for some map \hbox{$\sigma:F_1\to F_2$}.
Let $d'\in F_2$ be the unique element of the fiber such that $\psi_2\pa{e'}=\pa{y',d'}$.
Let $V:=\psi_2^{-1}\pa{U\times\cpa{d'}}$.
It is straightforward to verify that $\pa{V,\sigma^{-1}\pa{d'},\varphi}$ is a local trivialization of $t$ at $e'$, where $\varphi\pa{e}:=\pa{t\pa{e},\pr{2}\circ\psi_1\pa{e}}$.
\end{proof}

The previous lemma becomes false without the local connectivity hypothesis on $Y$, as shown by the following example.

\begin{example}
Let $Y:=\cpa{0}\cup\cpa{1/k \mid k\in\N}\subset\R$ and $\Z^{\ast}:=\Z-\cpa{0}$. Consider the commutative diagram:
\begin{equation}\begin{split}\label{}
\xymatrix{
    Y\times\Z^{\ast}	\ar[rr]^-{t}	\ar[dr]_{\pr{1}}	&	&	Y\times\N	\ar[dl]^{\pr{1}}\\
    &	Y }
\end{split}\end{equation}
where $t$ is defined by:
\begin{equation}\begin{split}
\xymatrix@R=0pt{
	Y\times\Z^{\ast}	\ar[r]^-{t}			&	Y\times\N\\
	\pa{y,n}									\ar@{|-{>}}[r]	&	\pa{y,-n}		&	n\in -\N\\
	\pa{0,n}									\ar@{|-{>}}[r]	&	\pa{0,n}		&	n\in\N\\
	\pa{1/k,n}								\ar@{|-{>}}[r]	&	\pa{1/k,n}	&	k\geq n\in\N\\
	\pa{1/k,n}								\ar@{|-{>}}[r]	&	\pa{1/k,1}	&	1\leq k<n\in\N}
\end{split}\end{equation}
So, $t$ is a (surjective) morphism of trivial covers of $Y$. There is no local trivialization of $t$ at $\pa{0,1}$ by comparing cardinalities of fibers.
\end{example}

\begin{corollary}
A bijective morphism $t$ of covers of a locally connected space $Y$ is an isomorphism.
\end{corollary}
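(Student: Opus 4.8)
The plan is to reduce the statement to the characterization, recorded just after diagram~\eqref{morphismofcovers}, that a morphism of covers is an isomorphism exactly when its underlying map of total spaces is a homeomorphism. Thus it suffices to show that the bijective morphism $t\colon E_1\to E_2$ is a homeomorphism.

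The key step is to apply Lemma~\ref{morphismiscover}: since $Y$ is locally connected and $t\colon p_1\to p_2$ is a morphism of covering maps as in~\eqref{morphismofcovers}, the map $t$ is itself a covering map (with base space $E_2$). This is the only place the local connectivity hypothesis is used, and by the example following Lemma~\ref{morphismiscover} it cannot be omitted. Having this, I would then recall from the discussion of covering maps above that every covering map is a local homeomorphism and is an open map. A continuous open bijection is a homeomorphism, since openness of $t$ is precisely continuity of $t^{-1}$. Hence $t$ is a homeomorphism, and therefore an isomorphism of covers.

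I do not expect a genuine obstacle here; essentially all the work has been front-loaded into Lemma~\ref{morphismiscover}. The only point meriting a moment's attention is bookkeeping: in the appeal to Lemma~\ref{morphismiscover} the relevant base is $E_2$, not $Y$, yet one needs no local connectivity assumption on $E_2$ because the hypothesis of Lemma~\ref{morphismiscover} already asks only that $Y$ be locally connected.
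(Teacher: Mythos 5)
Your argument is exactly the paper's: apply Lemma~\ref{morphismiscover} to see that $t$ is a covering map, hence open, and conclude that a continuous open bijection is a homeomorphism, hence an isomorphism of covers. The proposal is correct and takes essentially the same route, with the bookkeeping remark about the base of $t$ being a harmless addition.
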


\begin{proof}
By Lemma~\ref{morphismiscover}, $t$ is a covering map. So, $t$ is open.
\end{proof}

\begin{lemma}\label{homeotocover}
If a commutative diagram of maps is given:
\begin{equation}\begin{split}\label{homeotocoverdiag}
\xymatrix{
    Z	\ar[rr]^-{t}	\ar[dr]_{g}	&	&	E	\ar[dl]^{p}\\
    &	Y }
\end{split}\end{equation}
where $p$ is a covering map and $t$ is a homeomorphism, then $g$ is a covering map and $Z\cong E$.
The result also holds with the arrow of $t$ reversed.
\end{lemma}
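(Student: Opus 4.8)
The plan is to transport local trivializations of $p$ across the homeomorphism $t$. Commutativity of~\eqref{homeotocoverdiag} gives $g=p\circ t$. Fix $z\in Z$, put $e:=t\pa{z}$ and $y:=g\pa{z}=p\pa{e}$, and choose a local trivialization $\pa{U,F,\varphi}$ of $p$ at $y$; thus $\varphi:p^{-1}\pa{U}\to U\times F$ is a homeomorphism with $\pr{1}\circ\varphi=\rest{p}{p^{-1}\pa{U}}$ and $F$ discrete. Since $g=p\circ t$ we have $g^{-1}\pa{U}=t^{-1}\pa{p^{-1}\pa{U}}$, and restricting $t$ to this preimage of an open set yields a homeomorphism $t':g^{-1}\pa{U}\to p^{-1}\pa{U}$. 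Then $\psi:=\varphi\circ t'$ is a homeomorphism $g^{-1}\pa{U}\to U\times F$ with $\pr{1}\circ\psi=\pr{1}\circ\varphi\circ t'=\rest{p}{p^{-1}\pa{U}}\circ t'=\rest{g}{g^{-1}\pa{U}}$, so $\pa{U,F,\psi}$ is a local trivialization of $g$ at $y$ whose fiber $F$ is discrete. Since $z$ was arbitrary, $g$ is a covering map (the empty-fiber case is automatic, as then $p^{-1}\pa{U}=\emptyset=g^{-1}\pa{U}$).

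With $g$ a covering map, the commuting triangle $g=p\circ t$ exhibits $t:Z\to E$ as a morphism $g\to p$ of covering maps; being a homeomorphism, $t$ is therefore an isomorphism of covers (recall that a morphism is an isomorphism if and only if it is a homeomorphism, as noted after~\eqref{morphismofcovers}), whence $Z\cong E$. For the version with the arrow of $t$ reversed we are given a homeomorphism $t:E\to Z$ with $g\circ t=p$; then $t^{-1}:Z\to E$ is a homeomorphism with $p\circ t^{-1}=g$, so the case just proved applies verbatim to the commuting triangle with $t^{-1}$ in place of $t$, giving that $g$ is a covering map and $Z\cong E$.

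I do not anticipate a genuine obstacle here. The only points meriting a moment's attention are the set identity $g^{-1}\pa{U}=t^{-1}\pa{p^{-1}\pa{U}}$ and the inheritance of discreteness of the fiber by $\psi$, both of which are immediate. One could alternatively try to conclude that $g$ is a covering map from Lemma~\ref{morphismiscover} together with its corollary, but that route requires $Y$ to be locally connected, a hypothesis absent from the present statement; the direct transport of trivializations avoids it and is cleaner.
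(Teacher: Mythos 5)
Your proof is correct and is essentially the paper's own argument: both transport a local trivialization $\pa{U,F,\varphi}$ of $p$ across the homeomorphism $t$ (the paper's trivialization $\varphi\circ\pa{\rest{t}g^{-1}\pa{U}}$ is exactly your $\psi$), conclude $Z\cong E$ because $t$ is a homeomorphism, and handle the reversed arrow via $t^{-1}$. The only cosmetic difference is that the paper quantifies directly over all $y\in Y$ rather than over $z\in Z$ together with your empty-fiber parenthetical, which covers the points of $Y$ outside $\im{g}$.
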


\begin{proof}[Proof of Lemma~\ref{homeotocover}]
Let $y\in Y$. Let $\pa{U,F,\varphi}$ be a local trivialization of $p$ at $y$.
It is straightforward to check that $t\pa{g^{-1}\pa{U}}=p^{-1}\pa{U}$.
Hence, $\pa{U,F,\varphi\circ\pa{\rest{t}g^{-1}\pa{U}}}$ is a local trivialization of $g$ at $y$.
Therefore, $g$ is a covering map, and $Z\cong E$ since $t$ is a homeomorphism.
\end{proof}

The closed conclusion in the next lemma is immediate in case $E$ is Hausdorff, but we do not assume spaces are Hausdorff.
For a simple proof of the lemma, see~\cite[Prop.~1.34]{hatcher}.

\begin{lemma}\label{equalizer_lift}
Let $p:E\to Y$ be a covering map.
Let $g:W\to Y$ be any map.
Let $t_1$ and $t_2$ be continuous lifts of $g$ to $E$ (i.e., $p\circ t_1=g$ and $p\circ t_2=g$).
Then, the equalizer of $t_1$ and $t_2$ is open and closed in $W$.
\end{lemma}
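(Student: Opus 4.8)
The plan is to prove directly that the equalizer $A:=\cpa{w\in W\mid t_1\pa{w}=t_2\pa{w}}$ is both open and closed, by showing that $A$ is open and that $W\setminus A$ is open. The two verifications are completely parallel and use only the local-trivialization structure of the covering map $p$; in particular no separation hypothesis on $E$ is needed, since over an evenly covered neighborhood the individual sheets already do whatever separating is required.

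To see that $A$ is open, I would fix $w\in A$ and set $y:=g\pa{w}$, so $p\pa{t_1\pa{w}}=p\pa{t_2\pa{w}}=y$. Pick a local trivialization $\pa{U,F,\varphi}$ of $p$ at $y$ and let $d\in F$ be the common second coordinate of $\varphi\pa{t_1\pa{w}}=\varphi\pa{t_2\pa{w}}$. As recalled in Section~\ref{ss:coverings}, the sheet $V:=\varphi^{-1}\pa{U\times\cpa{d}}$ is open in $E$ and $\rest{p}V:V\to U$ is a homeomorphism, hence injective. Then $N:=g^{-1}\pa{U}\cap t_1^{-1}\pa{V}\cap t_2^{-1}\pa{V}$ is an open neighborhood of $w$, and for every $w'\in N$ the points $t_1\pa{w'}$ and $t_2\pa{w'}$ lie in $V$ and satisfy $p\pa{t_1\pa{w'}}=g\pa{w'}=p\pa{t_2\pa{w'}}$; injectivity of $\rest{p}V$ then forces $t_1\pa{w'}=t_2\pa{w'}$, so $N\subset A$.

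To see that $W\setminus A$ is open, I would fix $w$ with $t_1\pa{w}\neq t_2\pa{w}$, set $y:=g\pa{w}$, and again choose a local trivialization $\pa{U,F,\varphi}$ of $p$ at $y$. Writing $\varphi\pa{t_i\pa{w}}=\pa{y,d_i}$, injectivity of $\varphi$ gives $d_1\neq d_2$, so the sheets $V_i:=\varphi^{-1}\pa{U\times\cpa{d_i}}$ are open in $E$ and disjoint. Then $N:=g^{-1}\pa{U}\cap t_1^{-1}\pa{V_1}\cap t_2^{-1}\pa{V_2}$ is an open neighborhood of $w$, and for $w'\in N$ we have $t_1\pa{w'}\in V_1$, $t_2\pa{w'}\in V_2$, and $V_1\cap V_2=\emptyset$, whence $t_1\pa{w'}\neq t_2\pa{w'}$ and $w'\notin A$.

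I do not expect a genuine obstacle here; the one point that wants care is that $E$ is not assumed Hausdorff, so one must resist the temptation to separate $t_1\pa{w}$ from $t_2\pa{w}$ by disjoint open subsets of $E$ in the abstract. The correct move, used above, is that over the evenly covered neighborhood $U$ the distinct sheets $V_1,V_2$ are automatically disjoint open sets, and this step is exactly where discreteness of the fiber $F$ is consumed. This recovers the short argument indicated in~\cite[Prop.~1.34]{hatcher}.
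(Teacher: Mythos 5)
Your argument is correct and is essentially the standard proof that the paper itself defers to (the paper offers no proof of its own, citing~\cite[Prop.~1.34]{hatcher}): openness via a single sheet over an evenly covered neighborhood on which $p$ is injective, closedness via the disjointness of distinct sheets, with no Hausdorff assumption needed. Your remark correctly isolates where discreteness of the fiber is used, so nothing further is required.
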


Lemma~\ref{equalizer_lift} immediately implies the following.

\begin{corollary}\label{equalizer_morphisms}
Let $p_1:E_1\to Y$ and $p_2:E_2\to Y$ be covering maps. Let $t_1$ and $t_2$ be morphisms $p_1\to p_2$.
Then, the equalizer of $t_1$ and $t_2$ is open and closed in $E_1$.
\end{corollary}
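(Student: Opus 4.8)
The plan is to invoke Lemma~\ref{equalizer_lift} essentially verbatim, after recognizing the two morphisms $t_1,t_2$ as lifts of a single map. First I would set $W:=E_1$, let $g:=p_1:E_1\to Y$, and take the covering map in Lemma~\ref{equalizer_lift} to be $p:=p_2:E_2\to Y$. The defining commutativity of a morphism of covers---diagram~\eqref{morphismofcovers}---says precisely that $p_2\circ t_1=p_1=p_2\circ t_2$; hence $t_1$ and $t_2$ are continuous lifts of $g$ through $p$ in the sense required by Lemma~\ref{equalizer_lift}. That lemma then yields that the equalizer of $t_1$ and $t_2$ is open and closed in $W=E_1$, which is exactly the assertion.

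There is no real obstacle here: the corollary is a pure reformulation, the only content being the observation that a morphism $p_1\to p_2$ carries the same data as a lift of $p_1$ along the covering $p_2$. The one point worth noting is that Lemma~\ref{equalizer_lift} is stated with no Hausdorff hypothesis on $E_2$ (the "closed" conclusion being the subtle part in that generality), so the present argument likewise applies to arbitrary covering maps $p_1,p_2$ without any separation assumptions.
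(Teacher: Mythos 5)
Your argument is correct and is exactly the paper's route: the paper deduces the corollary immediately from Lemma~\ref{equalizer_lift}, with the morphism condition $p_2\circ t_i=p_1$ exhibiting $t_1,t_2$ as lifts of $p_1$ through the covering $p_2$, precisely as you spell out.
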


\subsection{Pullback of a Cover}\label{ss:pbcover}

The following key lemma pulls back a commutative triangle and will be used for pulling back morphisms.

\begin{lemma}\label{pullbacktriangle}
If a commutative diagram of maps is given:
\begin{equation}\begin{split}
\xymatrix{
    &							&	Z_{1}  \ar[dl]_{g_{1}}	\ar[dd]^{t}\\
    X	\ar[r]^{f}	&	Y\\
    &							&	Z_{2}  \ar[ul]^{g_{2}}}
\end{split}\end{equation}
then pullback yields the commutative diagram:
\begin{equation}\begin{split}\label{pullbackmorphism}
\xymatrix{
    \fa\pa{Z_{1}}	\ar[rrr]^-{\wt{f}}	\ar[rd]^-{\fa\pa{g_1}}	\ar[dd]_{\fa\pa{t}}	&	&	&	Z_{1}  \ar[dl]_{g_{1}}	\ar[dd]^{t}\\
    &	X	\ar[r]^{f}																																								&	Y\\
    \fa\pa{Z_{2}}	\ar[rrr]	\ar[ru]_-{\fa\pa{g_2}}															&	&	&	Z_{2}  \ar[ul]^{g_{2}}}
\end{split}\end{equation}
where $\fa\pa{t}=\rest{\pa{\tn{id}_{X}\times t}}{\fa\pa{Z_{1}}}$.
Furthermore, if $t$ is injective, surjective, open, or a homeomorphism respectively, then $\fa\pa{t}$ is as well.
\end{lemma}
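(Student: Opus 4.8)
The plan is to construct $\fa(t)$ explicitly and verify it has the stated properties, exploiting the symmetric description of pullback and the characterization of $\fa(Z)$ as a subspace of a product. First I would observe that $\mathrm{id}_X\times t:X\times Z_1\to X\times Z_2$ carries $\fa(Z_1)$ into $\fa(Z_2)$: if $(x,z)\in X\times Z_1$ satisfies $f(x)=g_1(z)$, then since $g_1=g_2\circ t$ we get $f(x)=g_2(t(z))$, so $(x,t(z))\in\fa(Z_2)$. Hence $\fa(t):=\rest{(\mathrm{id}_X\times t)}{\fa(Z_1)}$ is a well-defined continuous map $\fa(Z_1)\to\fa(Z_2)$, being a restriction (in both domain and codomain) of the continuous map $\mathrm{id}_X\times t$. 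Commutativity of~\eqref{pullbackmorphism} is then a matter of unwinding definitions: $\fa(g_i)$ is the restriction of $\pr{1}$ and $\wt f$ the restriction of $\pr{2}$ on $X\times Z_i$, and on points $\fa(t)(x,z)=(x,t(z))$, so $\fa(g_2)\circ\fa(t)=\fa(g_1)$ and $t\circ\wt f=\wt f\circ\fa(t)$ hold tautologically. One could alternatively derive $\fa(t)$ from the universal property~\eqref{uppb} applied to $Q=\fa(Z_1)$ with the maps $q_1=\fa(g_1)$ and $q_2=t\circ\wt f$, which automatically yields both commutative squares; either route is routine.

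For the preservation properties I would handle each adjective in turn, all by elementary point-set arguments. If $t$ is injective, then $\mathrm{id}_X\times t$ is injective, hence so is its restriction $\fa(t)$. If $t$ is surjective, I would check surjectivity of $\fa(t)$ directly: given $(x,z')\in\fa(Z_2)$, so $f(x)=g_2(z')$, pick $z\in Z_1$ with $t(z)=z'$; then $g_1(z)=g_2(t(z))=g_2(z')=f(x)$, so $(x,z)\in\fa(Z_1)$ and $\fa(t)(x,z)=(x,z')$. If $t$ is open, I want $\fa(t)$ open, and here it is cleanest to invoke the symmetry remark: the map $(z,x)\mapsto(x,z)$ gives a canonical homeomorphism $t_\sharp(X)\cong\fa(Z_1)$ intertwining $\fa(t)$ with $\fs(g)$ — wait, more carefully, openness is the one point where a little care is warranted, so let me describe that step separately.

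The main obstacle, such as it is, is openness. I would argue as follows: $\fa(t):\fa(Z_1)\to\fa(Z_2)$ is the restriction of $\mathrm{id}_X\times t:X\times Z_1\to X\times Z_2$ to the saturated-looking subsets, but $\fa(Z_1)$ is precisely the preimage of $\fa(Z_2)$ under $\mathrm{id}_X\times t$ — indeed $(x,z)\in\fa(Z_1)$ iff $f(x)=g_1(z)=g_2(t(z))$ iff $(x,t(z))\in\fa(Z_2)$, exactly the condition $(\mathrm{id}_X\times t)(x,z)\in\fa(Z_2)$. Since $\mathrm{id}_X\times t$ is open on $X\times Z_1$ (a product of an identity with an open map is open) and $\fa(Z_1)=(\mathrm{id}_X\times t)^{-1}(\fa(Z_2))$, the restriction of an open map to the full preimage of a subset, with that subset as codomain, is open: if $W\subset\fa(Z_1)$ is open, write $W=\widehat W\cap\fa(Z_1)$ with $\widehat W$ open in $X\times Z_1$; then $\fa(t)(W)=(\mathrm{id}_X\times t)(\widehat W)\cap\fa(Z_2)$ because $\fa(Z_1)$ is the preimage, and $(\mathrm{id}_X\times t)(\widehat W)$ is open in $X\times Z_2$. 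Finally, if $t$ is a homeomorphism, then $t$ is injective, surjective, continuous, and open, so by the three cases just established $\fa(t)$ is a continuous open bijection, hence a homeomorphism. (Equivalently, $\fa(t^{-1})$ is a continuous inverse by functoriality, which one checks from the explicit formula $\fa(t^{-1})(x,z')=(x,t^{-1}(z'))$.) This completes the proof.
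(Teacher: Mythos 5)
Your proposal is correct, and its overall shape matches the paper's proof: the paper also obtains $\fa\pa{t}$ from the universal property of the pullback of $g_2$ along $f$ (with $Q=\fa\pa{Z_1}$, $q_1=\fa\pa{g_1}$, $q_2=t\circ\wt{f}$), identifies it with $\rest{\pa{\tn{id}_X\times t}}{\fa\pa{Z_1}}$, and dismisses the injective, surjective, and homeomorphism cases as simple exercises, exactly as you do. The only genuine divergence is the openness step. The paper works on a basis of $\fa\pa{Z_1}$: for $A\subset X$ and $B\subset Z_1$ open it verifies the set identity $\fa\pa{t}\pa{\pa{A\times B}\cap\fa\pa{Z_1}}=\pa{A\times t\pa{B}}\cap\fa\pa{Z_2}$, which is open because $t\pa{B}$ is. You instead note that $\fa\pa{Z_1}=\pa{\tn{id}_X\times t}^{-1}\pa{\fa\pa{Z_2}}$ --- valid, since $g_1=g_2\circ t$ makes the conditions $f(x)=g_1(z)$ and $f(x)=g_2(t(z))$ literally equivalent --- and then invoke the general fact that restricting an open map to the \emph{full} preimage of a subset, with that subset as codomain, is again open; your verification of that fact is correct, the key point being that any point of $\widehat{W}$ mapping into $\fa\pa{Z_2}$ automatically lies in the domain. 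Both arguments are elementary and sound; yours isolates a reusable point-set lemma and makes transparent why no extra saturation hypothesis is needed, while the paper's basis computation is a bit shorter and entirely self-contained. One cosmetic point: the abandoned ``symmetry'' digression in your openness paragraph (the clause ending ``--- wait, more carefully'') should simply be deleted, since you replace it with the correct argument and it refers to an intertwining that is never set up.
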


\begin{proof}[Proof of Lemma~\ref{pullbacktriangle}]
Pullback $g_1$ and $g_2$ (separately) along $f$ to obtain~\eqref{pullbackmorphism} minus $\fa\pa{t}$.
To get $\fa\pa{t}$, apply the universal property of pullback to the pullback of $g_2$ along $f$, and with $Q=\fa\pa{Z_1}$, $q_1=\fa\pa{g_1}$, and $q_2=t\circ\wt{f}$.
The injective, surjective, and homeomorphism claims are simple exercises.
Suppose that $t$ is open. It suffices to verify $\fa\pa{t}$ is open on a basis of $\fa\pa{Z_1}$.
If $A\subset X$ and $B\subset Z_1$, then:
\[
	\fa\pa{t}\pa{\pa{A\times B} \cap \fa\pa{Z_1}} = \pa{A\times t\pa{B}}\cap \fa\pa{Z_2}
\]
and the desired result follows since $t$ is open.
\end{proof}

Consider the following diagram where $f$ is a map and $p$ is a covering map:
\begin{equation}\begin{split}\label{basicdiagram}
\xymatrix{
    							&	E	\ar[d]^-{p}\\
    X	\ar[r]^{f}	&	Y}
\end{split}\end{equation}
Pullback yields the commutative diagram:
\begin{equation}\begin{split}\label{basicpullback}
\xymatrix{
    \fa\pa{E}	\ar[r]^-{\wt{f}}	\ar[d]_{\fa(p)}	&	E	\ar[d]^{p}\\
    X  							\ar[r]^-{f}     																& Y }
\end{split}\end{equation}

\begin{lemma}\label{pullbackiscover}
In~\eqref{basicpullback}, $\fa(p)$ is a covering map.
\end{lemma}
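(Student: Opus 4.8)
The plan is to verify the defining local-triviality condition for $\fa(p)$ directly, by transporting a local trivialization of $p$ near a point of $Y$ to a local trivialization of $\fa(p)$ near the corresponding point of $X$. First I would fix $x\in X$ and set $y:=f(x)$. Since $p$ is a covering map, choose a local trivialization $\pa{U,F,\varphi}$ of $p$ at $y$, so $U$ is open in $Y$, $F$ is discrete, and $\varphi:p^{-1}\pa{U}\to U\times F$ is a homeomorphism over $U$. The natural candidate evenly covered neighborhood of $x$ is $V:=f^{-1}\pa{U}$, which is open in $X$ since $f$ is continuous and contains $x$.

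Next I would identify $\fa(p)^{-1}\pa{V}$ and build the trivializing homeomorphism. By Lemma~\ref{inclusion} (applied to the restriction of $p$) together with the description~\ref{vert_fiber} of vertical fibers, one checks that $\fa(p)^{-1}\pa{V}$ consists of pairs $\pa{x',e}$ with $x'\in V$ and $e\in p^{-1}\pa{U}$ satisfying $f\pa{x'}=p\pa{e}$; in other words, $\fa(p)^{-1}\pa{V}$ is exactly the pullback of $\rest{p}:p^{-1}\pa{U}\to U$ along $\rest{f}:V\to U$. Using the homeomorphism $\varphi$ and Lemma~\ref{pullbacktriangle} (or just the universal property of pullback directly), the pullback of $\rest{p}$ along $\rest{f}$ is homeomorphic, over $V$, to the pullback of $\pr{1}:U\times F\to U$ along $\rest{f}:V\to U$, which is canonically $V\times F$. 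Composing these identifications gives a homeomorphism $\psi:\fa(p)^{-1}\pa{V}\to V\times F$ commuting with the projections to $V$, and since $F$ is discrete this exhibits $\pa{V,F,\psi}$ as a local trivialization of $\fa(p)$ at $x$.

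I expect the only mildly delicate point to be bookkeeping: making sure the two projection triangles actually commute after all the identifications, i.e. that $\pr{1}\circ\psi=\rest{\fa(p)}$ on $\fa(p)^{-1}\pa{V}$. This is handled cleanly by invoking the uniqueness clause in the universal property~\eqref{uppb}, or alternatively by writing $\psi$ explicitly as $\pa{x',e}\mapsto\pa{x',\pr{2}\varphi\pa{e}}$ and checking directly that its inverse is $\pa{x',d}\mapsto\pa{x',\varphi^{-1}\pa{f\pa{x'},d}}$; both maps are continuous, being built from $f$, $\varphi$, $\varphi^{-1}$, and the projections, and they are mutually inverse by inspection. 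Once the trivialization is in hand, discreteness of the fiber is inherited from $p$, so $\fa(p)$ is a fiber bundle projection with discrete fibers, hence a covering map. No hypotheses beyond those already present in~\eqref{basicdiagram} are needed; in particular local connectivity of $X$ or $Y$ plays no role here.
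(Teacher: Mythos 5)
Your argument is correct and is essentially the paper's own proof: both pull back the local trivialization $\pa{U,F,\varphi}$ along $\rest{f}:f^{-1}\pa{U}\to U$ via Lemma~\ref{pullbacktriangle}, identify $\fa(p)^{-1}\pa{f^{-1}\pa{U}}$ with that restricted pullback, and compose with the canonical homeomorphism to $f^{-1}\pa{U}\times F$; your explicit formula $\psi\pa{x',e}=\pa{x',\pr{2}\varphi\pa{e}}$ is exactly the paper's $\pi\circ\rest{f}^{\ast}\pa{\varphi}$. Nothing is missing, and you are right that no local niceness hypotheses are needed.
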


\begin{proof}[Proof of Lemma~\ref{pullbackiscover}]
Let $x\in X$, let $y:=f\pa{x}$, and let $\pa{U,F,\varphi}$ be a local trivialization of $p$ at $y\in Y$.
By Lemma~\ref{pullbacktriangle}, we may pullback diagram~\eqref{localtrivialization} along the map $\rest{f}:f^{-1}\pa{U}\to U$ and obtain the commutative diagram:
\begin{equation}\begin{split}
\xymatrix{
    \rest{f}^{\ast}\pa{p^{-1}\pa{U}}	\ar[rrr]	\ar[rd]^{\rest{f}^{\ast}(\rest{p})}	\ar[dd]_{\rest{f}^{\ast}\pa{\varphi}}	&	&	&	p^{-1}\pa{U}  \ar[dl]_{\rest{p}}	\ar[dd]^{\varphi}\\
    &	f^{-1}\pa{U}	\ar[r]^-{\rest{f}}																																					&	U\\
    \rest{f}^{\ast}\pa{U\times F}	\ar[rrr]	\ar[ru]_{\rest{f}^{\ast}(\pr{1})}																		&	&	&	U\times F  \ar[ul]^{\pr{1}}}
\end{split}\end{equation}
where $\rest{f}^{\ast}\pa{\varphi}$ is a homeomorphism (since $\varphi$ is a homeomorphism).
The subspaces $\rest{f}^{\ast}\pa{p^{-1}\pa{U}}$ and $\fa(p)^{-1}\pa{f^{-1}(U)}$ of $X\times E$ are equal, and the map:
\begin{equation}\begin{split}
\xymatrix@R=0pt{
	\rest{f}^{\ast}\pa{U\times F}	\ar[r]^-{\pi}		&	f^{-1}\pa{U}\times F\\
	\pa{x,\pa{f\pa{x},d}}		\ar@{|-{>}}[r]	&	\pa{x,d}}
\end{split}\end{equation}
is a homeomorphism.
Hence, $\pa{f^{-1}\pa{U},F,\pi\circ \rest{f}^{\ast}(\varphi)}$ is the desired local trivialization of $\fa\pa{p}$ at $x$.
\end{proof}

\begin{example}
Let $f:S^1\to S^1$ be $w\mapsto w^m$ and let $p:S^1\to S^1$ be $z\mapsto z^n$ for fixed natural numbers $m$ and $n$.
Then, $f$, $p$, $\fa\pa{p}$, and $\wt{f}$ are all covering maps,
and $\fa\pa{S^1}=\cpa{\pa{w,z}\mid w^m=z^n}\subset S^1\times S^1$ is a torus link with $g:=\gcd\pa{m,n}$ equally spaced components.
Each component of $\fa\pa{S^1}$ winds $n/g$ times around $S^1\times S^1$ in the $w$ direction and $m/g$ times in the $z$ direction.
Thus, on each component of $\fa\pa{S^1}$, $\fa\pa{p}$ restricts to an $n/g$ fold cover, and $\wt{f}$ restricts to an $m/g$ fold cover.
In particular, if $n|m$, then $\fa\pa{p}$ is a trivial cover. 
\end{example}

\begin{example}
Even for the pullback of a covering map~\eqref{basicpullback}, $\wt{f}$ is generally not open or closed.
For instance, let $f:[0,1)\to\R$ be inclusion and let $p:\R\to\R$ be the identity.
\end{example}

\begin{example}
By Corollary~\ref{embedding}, if $f$ is an embedding in~\eqref{basicpullback}, then $\wt{f}$ is an embedding.
If $f$ is merely a continuous bijection, then $\wt{f}$ is not necessarily an embedding, even with the assumption that $p$ is a covering map.
Consider \hbox{$f:[0,1)\to S^1$} given by $t\mapsto\exp\pa{2\pi it}$ and $p:S^1\to S^1$ given by the identity.
\end{example}

\subsection{Categories of Coverings}\label{ss:catcov}

Let $Y$ be a topological space.
We consider four categories of coverings.
First, the \textbf{objects} of $\cov(Y)$ are arbitrary covering maps $p:E\to Y$,
and a \textbf{morphism} between two such objects is an ordinary morphism of covering maps as defined in~\eqref{morphismofcovers}.
Second, the \textbf{objects} of $\scov(Y)$ are surjective covering maps $p:E\to Y$,
and a \textbf{morphism} between two such objects is a surjective morphism of covering maps.
Third, if $\bs{Y}$ is based, then the \textbf{objects} of $\bcov\bs{Y}$ are based covering maps $p:\bs{E}\to\bs{Y}$,
and a \textbf{morphism} between two such objects is a based morphism of covering maps.
Fourth, the \textbf{objects} of $\bscov\bs{Y}$ are based, surjective covering maps $p:\bs{E}\to\bs{Y}$,
and a \textbf{morphism} between two such objects is a based, surjective morphism of covering maps.\\

In each of these four cases, if $p:E\to Y$ is an object, then the \textbf{identity morphism} is the identity map $1_E:E\to E$.
And, \textbf{composition} of morphisms $t_1:p_1\to p_2$ and $t_2:p_2\to p_3$ is defined to be usual composition of functions $t_2\circ t_1$.
So, composition of morphisms is associative, and the left and right unit laws hold
(i.e., if $p_1:E_1\to Y$ and $p_2:E_2\to Y$ are objects and $t:p_1\to p_2$ is a morphism, then $t\circ 1_{E_1}=t=1_{E_2}\circ t$).
Hence, $\cov(Y)$, $\scov(Y)$, $\bcov\bs{Y}$, and $\bscov\bs{Y}$ are categories.\\

By our convention, when considering $\bcov$ or $\bscov$, a \textbf{map} $f:X\to Y$ means a based map $f:\bs{X}\to\bs{Y}$.
In particular, $y_0=f\pa{x_0}$, $X\neq\emptyset$, $Y\neq\emptyset$, and any object is nonempty.
Basepoints will be implicit at times, especially with objects.
No special assumptions are made in the two unbased categories.
For example, when considering $\scov$, a map $f:X\to Y$ may or may not be surjective.\\

Fix one of the four categories of coverings of a space $Y$.
If $p_1:E_1\to Y$ and $p_2:E_2\to Y$ are objects, then $\h{E_1,E_2}$ and $\h{p_1,p_2}$ both denote the collection of morphisms $p_1\to p_2$.
Further, $t\in\h{E_1,E_2}$ is an \textbf{isomorphism} provided $t$ is a homeomorphism.
We write $E_1\cong E_2$ or $p_1\cong p_2$ to mean there exists an isomorphism $p_1\to p_2$.
An object $p:E\to Y$ is \textbf{trivial} provided there is an isomorphism $p\to\pr{1}$ where $\pr{1}:Y\times D\to Y$ is an object and $D$ is discrete.
In particular, for $\scov$ we have $D\neq\emptyset$ if $Y\neq\emptyset$, and for the two based categories $Y\times D$ is based at $\pa{y_0,d_0}$ for some $d_0\in D$.

\begin{lemma}\label{pullbackfunctor}
Let $f:X\to Y$ be a map. The following are covariant functors:
\begin{enumerate}[label=\tn{(\arabic*)}]\setcounter{enumi}{0}
\item\label{fcov} $\fa:\tn{Cov}\pa{Y}\to\tn{Cov}\pa{X}$.
\item\label{scov} $\fa:\tn{SCov}\pa{Y}\to\tn{SCov}\pa{X}$.
\item\label{fbcov} $\fa:\tn{BCov}\bs{Y}\to\tn{BCov}\bs{X}$.
\item\label{fbscov} $\fa:\tn{BSCov}\bs{Y}\to\tn{BSCov}\bs{X}$.
\end{enumerate}
\end{lemma}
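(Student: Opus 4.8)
The plan is to verify the two functor axioms—preservation of identities and preservation of composition—for each of the four categories, reducing everything to results already established. The essential content has in fact already been proved: Lemma~\ref{pullbackiscover} shows that $\fa$ sends a covering map $p:E\to Y$ to a covering map $\fa(p):\fa(E)\to X$, so $\fa$ is well-defined on objects of $\cov(Y)$; Lemma~\ref{pullbacktriangle} shows that a morphism $t:p_1\to p_2$ of covering maps over $Y$ (viewed as the commutative triangle it names) pulls back to a morphism $\fa(t):\fa(p_1)\to\fa(p_2)$ over $X$, namely $\fa(t)=\rest{(\tn{id}_X\times t)}{\fa(Z_1)}$. So the bulk of the work is already done, and what remains is bookkeeping.

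First I would record well-definedness on objects and morphisms for $\cov$: objects go to objects by Lemma~\ref{pullbackiscover}, morphisms go to morphisms by Lemma~\ref{pullbacktriangle}. Then I would check the axioms directly from the formula $\fa(t)=\rest{(\tn{id}_X\times t)}{\fa(Z)}$. For the identity: $\fa(1_E)=\rest{(\tn{id}_X\times 1_E)}{\fa(E)}=\rest{\tn{id}_{X\times E}}{\fa(E)}=1_{\fa(E)}$, using that $\fa(E)\subset X\times E$ is a subspace. For composition: given $t_1:p_1\to p_2$ and $t_2:p_2\to p_3$, one computes
\[
	\fa(t_2)\circ\fa(t_1)=\rest{(\tn{id}_X\times t_2)}{}\circ\rest{(\tn{id}_X\times t_1)}{}=\rest{\big((\tn{id}_X\times t_2)\circ(\tn{id}_X\times t_1)\big)}{\fa(Z_1)}=\rest{(\tn{id}_X\times(t_2\circ t_1))}{\fa(Z_1)}=\fa(t_2\circ t_1),
\]
where one must note that $\fa(t_1)$ maps $\fa(Z_1)$ into $\fa(Z_2)$ (again Lemma~\ref{pullbacktriangle}) so that the composite makes sense and agrees with the restriction of the product map. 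This establishes~\ref{fcov}.

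Next I would deduce the remaining three cases by checking that $\fa$ respects the extra structure. For~\ref{scov}: if $p$ is surjective then by Lemma~\ref{inclusion} (or property~\eqref{bij_vert_fibers} fiberwise) $\im{\fa(p)}=f^{-1}(\im p)=f^{-1}(Y)=X$, so $\fa(p)$ is surjective; and if $t$ is a surjective morphism then $\fa(t)$ is surjective by the surjectivity clause of Lemma~\ref{pullbacktriangle}. For~\ref{fbcov}: a based map $f$ has $f(x_0)=y_0=p(e_0)$, so $(x_0,e_0)\in\fa(E)$ provides a canonical basepoint, as already observed after~\eqref{pullback}; a based morphism $t$ satisfies $t(e_0)=e_0'$, hence $\fa(t)(x_0,e_0)=(x_0,t(e_0))=(x_0,e_0')$ is based. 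For~\ref{fbscov} one combines the two preceding observations. In each case the identity and composition axioms are inherited verbatim from~\ref{fcov}, since the underlying maps and formulas are unchanged and one has only restricted to a subcollection of objects and morphisms closed under the operations. I do not anticipate a genuine obstacle here; the only point requiring a line of care is confirming that $\fa(t_1)$ lands in $\fa(Z_2)$ so that the composition computation takes place entirely inside $X\times Z_1\to X\times Z_2\to X\times Z_3$ and the restriction identities are literal equalities of functions rather than mere agreements up to the canonical homeomorphisms of Lemma~\ref{pullback_comp}.
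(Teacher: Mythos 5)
Your proposal is correct and follows essentially the same route as the paper: objects via Lemma~\ref{pullbackiscover}, morphisms via Lemma~\ref{pullbacktriangle}, surjectivity via Lemma~\ref{inclusion}, the canonical basepoint $(x_0,e_0)$ in the based cases, and the identity/composition axioms read off from the explicit formula $\fa(t)=\rest{(\tn{id}_X\times t)}{\fa\pa{Z_1}}$. You simply spell out the bookkeeping that the paper compresses into one sentence, which is fine.
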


\begin{proof}[Proof of Lemma~\ref{pullbackfunctor}]
Lemma~\ref{pullbackiscover} defines $\fa$ on objects, and Lemma~\ref{pullbacktriangle} defines $\fa$ on morphisms.
If $p:E\to Y$ is surjective, then $\fa\pa{p}$ is surjective by Lemma~\ref{inclusion}.
In the based cases, recall that if $p:\bs{E}\to\bs{Y}$, then $\fa\pa{E}$ is naturally based at $\pa{x_0,e_0}$.
The identity and composition axioms for morphisms follow from Lemma~\ref{pullbacktriangle}.
\end{proof}

\begin{corollary}\label{ipbi}
Fix a category of coverings. If $f:X\to Y$ is a map, $p_1:E_1\to Y$ and $p_2:E_2\to Y$ are objects, and $E_1\cong E_2$, then $\fa\pa{E_1}\cong\fa\pa{E_2}$.
\end{corollary}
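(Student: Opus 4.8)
The plan is to observe that Corollary~\ref{ipbi} is the standard fact that a covariant functor preserves isomorphisms, applied to the pullback functor supplied by Lemma~\ref{pullbackfunctor}. No genuine obstacle arises; the only point requiring a word of care is that ``isomorphism'' in each of the four categories means ``morphism that happens to be a homeomorphism,'' so one must check this property is transported by $\fa$.

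In detail, I would first unwind the hypothesis. Since $E_1\cong E_2$, there is a morphism $t:p_1\to p_2$ in the fixed category which is an isomorphism, that is, a homeomorphism $t:E_1\to E_2$; its inverse homeomorphism $s:E_2\to E_1$ is again a morphism in the fixed category (it is automatically surjective, and basepoint-preserving when basepoints are present), and $s\circ t=1_{E_1}$, $t\circ s=1_{E_2}$. Next, apply the functor $\fa$ of Lemma~\ref{pullbackfunctor}. This produces morphisms $\fa\pa{t}:\fa\pa{E_1}\to\fa\pa{E_2}$ and $\fa\pa{s}:\fa\pa{E_2}\to\fa\pa{E_1}$ in the fixed category, and since $\fa$ preserves composition and identities,
\[
	\fa\pa{s}\circ\fa\pa{t}=\fa\pa{s\circ t}=\fa\pa{1_{E_1}}=1_{\fa\pa{E_1}},\qquad
	\fa\pa{t}\circ\fa\pa{s}=\fa\pa{1_{E_2}}=1_{\fa\pa{E_2}}.
\]
Hence $\fa\pa{t}$ is an isomorphism with inverse $\fa\pa{s}$, so $\fa\pa{E_1}\cong\fa\pa{E_2}$.

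Alternatively, and even more directly, one can bypass the inverse morphism entirely: Lemma~\ref{pullbackfunctor} already gives that $\fa\pa{t}$ is a morphism $\fa\pa{E_1}\to\fa\pa{E_2}$ in the fixed category, and the last sentence of Lemma~\ref{pullbacktriangle} shows that, because $t$ is a homeomorphism, so is $\fa\pa{t}$; thus $\fa\pa{t}$ is an isomorphism. Either route yields a proof of at most a couple of lines once Lemmas~\ref{pullbacktriangle} and~\ref{pullbackfunctor} are available, so I expect the write-up to be essentially immediate.
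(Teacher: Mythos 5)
Your proposal is correct and matches the paper's proof, which simply states that the result is immediate by functoriality of $\fa$ (Lemma~\ref{pullbackfunctor}); your primary argument is exactly this, spelled out. The alternative route via the homeomorphism clause of Lemma~\ref{pullbacktriangle} is a fine variant but adds nothing beyond the functoriality argument.
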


\begin{proof}[Proof of Corollary~\ref{ipbi}]
Immediate by functoriality of $\fa$.
\end{proof}

\begin{lemma}\label{trivialcover}
Fix a category of coverings.
Let $f:X\to Y$ be a map.
Let $p$ denote the trivial object \hbox{$\pr{1}:Y\times D\to Y$} where $D$ is discrete.
Let $q$ denote the trivial object $\pr{1}:X\times D\to X$.
Then, $\fa\pa{p}\cong q$.
Specifically, the morphism $\mu:q\to \fa\pa{p}$ given by $\pa{x,d}\mapsto\pa{x,\pa{f(x),d}}$ is an isomorphism.
For the based categories, $\fa\pa{Y\times D}$ is based at $\pa{x_0,\pa{y_0,d_0}}$ and $X\times D$ is based at $\pa{x_0,d_0}$.
\end{lemma}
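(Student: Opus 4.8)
The plan is to write down the candidate map $\mu\colon X\times D\to \fa\pa{p}$ explicitly and verify it has all the required properties, working one category at a time but reusing the same formula throughout. First I would recall from~\eqref{pullback_set} that
\[
	\fa\pa{Y\times D} = \cpa{\pa{x,\pa{y,d}}\in X\times\pa{Y\times D}\mid f\pa{x}=\pr{1}\pa{y,d}=y},
\]
so that a point of $\fa\pa{Y\times D}$ is determined by $x\in X$ and $d\in D$, with the middle coordinate forced to be $f\pa{x}$. This makes the set-theoretic bijection transparent: $\mu\pa{x,d}=\pa{x,\pa{f\pa{x},d}}$ has two-sided inverse $\pa{x,\pa{f\pa{x},d}}\mapsto\pa{x,d}$, which is the restriction of $\tn{id}_X\times\pr{2}$ and hence continuous, while $\mu$ itself is continuous because both of its coordinate functions $x\mapsto x$ and $x\mapsto\pa{f\pa{x},d}$ are. (Alternatively, one could obtain $\mu$ and its continuity for free from the universal property~\eqref{uppb} applied with $Q=X\times D$, $q_1=\pr{1}$, $q_2=f\times\tn{id}_D$, but the inverse still needs checking.) Thus $\mu$ is a homeomorphism, and it is a morphism $q\to\fa\pa{p}$ over $X$ since $\fa\pa{p}\circ\mu=\pr{1}\circ\mu = \pr{1} = q$; by the remarks following~\eqref{morphismofcovers}, a morphism that is a homeomorphism is an isomorphism, so $\fa\pa{p}\cong q$ in $\cov\pa{X}$.

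Next I would check that $\mu$ respects the extra structure imposed by the other three categories, so that the same isomorphism works in each. For $\scov$: if $Y\neq\emptyset$ then $D\neq\emptyset$ by the convention in Section~\ref{ss:catcov}, and a product projection $X\times D\to X$ with $D\neq\emptyset$ is surjective, as is $\fa\pa{p}$ by Lemma~\ref{inclusion} (or directly); moreover $\mu$ is surjective, being a bijection, so it is a morphism in $\scov\pa{X}$. For $\bcov$ and $\bscov$: by the based convention $y_0=f\pa{x_0}$, and Lemma~\ref{pullbackfunctor} records that $\fa\pa{Y\times D}$ carries the basepoint $\pa{x_0,\pa{y_0,d_0}}$ coming from the basepoint $\pa{y_0,d_0}$ of $Y\times D$; since $\mu\pa{x_0,d_0}=\pa{x_0,\pa{f\pa{x_0},d_0}}=\pa{x_0,\pa{y_0,d_0}}$, the map $\mu$ is basepoint-preserving, and it is a based (and, in the $\bscov$ case, surjective) isomorphism. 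This simultaneously verifies the final sentence of the statement about where the basepoints sit.

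The only genuinely delicate point — and it is minor — is keeping the bookkeeping of the three nested coordinates straight and making sure the asserted inverse is literally the inverse as written, rather than merely a set-theoretic section; I expect this to be the main "obstacle," which is to say there is essentially none, the result being a direct unwinding of the definition of pullback together with Lemma~\ref{pullbackfunctor}. I would close by remarking that this is the special case $g=\pr{1}$ of Lemma~\ref{pullbackiscover}'s construction, so no even-covering argument is needed: triviality of the target cover is preserved by pullback on the nose.
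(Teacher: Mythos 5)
Your proof is correct and follows essentially the same route as the paper: obtain $\mu$ (by the universal property or, equivalently, the explicit formula) and exhibit its continuous inverse as the restriction of the projection $X\times\pa{Y\times D}\to X\times D$. Your extra checks of surjectivity and basepoints in the $\scov$, $\bcov$, and $\bscov$ cases are fine and are simply left implicit in the paper.
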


\begin{proof}[Proof of Lemma~\ref{trivialcover}]
The universal property of pullback yields the morphism $\mu$ by considering the maps $\pr{1}:X\times D\to X$ and $X\times D\to Y\times D$ given by $\pa{x,d}\mapsto\pa{f(x),d}$.
The restriction of the projection $X\times\pa{Y\times D}\to X\times D$ to $\fa\pa{Y\times D}$ is the (continuous) inverse of $\mu$.
\end{proof}

Corollary~\ref{ipbi} and Lemma~\ref{trivialcover} yield the following.

\begin{corollary}\label{pullbacktrivialcover}
Fix a category of coverings.
If $f:X\to Y$ is a map and $p:E\to Y$ is a trivial object, then $\fa\pa{p}$ is a trivial object.
If $X\neq\emptyset$, then fibers of $p$ and of $\fa\pa{p}$ are homeomorphic.
\end{corollary}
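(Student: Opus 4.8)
The plan is to chain together the two results cited just above the statement. First I would unwind the definition of a trivial object: by hypothesis there is a discrete space $D$ (possibly empty) and an isomorphism $t:p\to\pr{1}$ in the fixed category, where $\pr{1}:Y\times D\to Y$. Applying Corollary~\ref{ipbi} to this isomorphism gives $\fa\pa{p}\cong\fa\pa{\pr{1}}$ in the corresponding category over $X$. Then Lemma~\ref{trivialcover}, applied with this same $D$, furnishes an isomorphism $\mu:q\to\fa\pa{\pr{1}}$ where $q:X\times D\to X$ is the trivial object over $X$; composing isomorphisms yields $\fa\pa{p}\cong q$, so $\fa\pa{p}$ is trivial. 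One should check that the side conditions defining each of the four categories are respected---surjectivity ($D\neq\emptyset$ when $Y\neq\emptyset$) for $\scov$ and $\bscov$, and compatibility of basepoints for $\bcov$ and $\bscov$---but Lemma~\ref{trivialcover} already records the relevant basepoint bookkeeping, and Lemma~\ref{pullbackfunctor} guarantees $\fa$ lands in the right category, so nothing new is needed.

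For the fiber claim, suppose $X\neq\emptyset$; since the map $f:X\to Y$ exists, $Y\neq\emptyset$ as well. The fiber of $\pr{1}:Y\times D\to Y$ over any point is a copy of $D$, and transporting along the homeomorphism $t$ (which commutes with the projections) shows every fiber of $p$ is homeomorphic to $D$. The identical argument applied to the isomorphism $\fa\pa{p}\cong q$ and the trivial cover $q:X\times D\to X$ shows every fiber of $\fa\pa{p}$ over a point of $X$ is homeomorphic to $D$. Hence fibers of $p$ and of $\fa\pa{p}$ are homeomorphic, both being copies of $D$. (Alternatively, one could read the fiber statement off properties~\ref{vert_fiber} and~\ref{bij_vert_fibers} directly: the fiber of $\fa\pa{p}$ over $x$ is $\cpa{x}\times p^{-1}\pa{f\pa{x}}$, which is homeomorphic to $p^{-1}\pa{f\pa{x}}\cong D$.)

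I do not expect a genuine obstacle here---the corollary is essentially formal, being an instance of "pullback preserves isomorphism plus pullback of the standard trivial cover is the standard trivial cover." The only point demanding a moment's care is the bookkeeping across the four categories (surjectivity and basepoints), and that is already absorbed into Lemmas~\ref{trivialcover} and~\ref{pullbackfunctor}, so the two-line proof via Corollary~\ref{ipbi} and Lemma~\ref{trivialcover} is the route I would take.
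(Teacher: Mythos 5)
Your proposal is correct and follows exactly the route the paper takes: the paper derives this corollary immediately from Corollary~\ref{ipbi} applied to the isomorphism $p\cong\pr{1}$ together with Lemma~\ref{trivialcover}, which is precisely your chain of isomorphisms, and the fiber claim is the same routine identification of both fibers with $D$. Nothing further is needed.
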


\subsection{Disjoint Unions of Covers}\label{ss:ducov}

Given a collection of coverings of a fixed space $Y$, intuitively one would like to stack up the covers, thus yielding one big cover of $Y$.
Conversely, given a cover of $Y$, one may wish to pick out some of the components of the cover thus yielding a smaller cover of $Y$.
Perhaps surprisingly, both of these operations are invalid in complete generality.
With some local niceness hypotheses on $Y$, both operations hold. We explain these facts below.\\ 
 
Recall two different notions of topological disjoint union.
First, suppose that a topological space $Z$ equals the union of open and pairwise disjoint subspaces $Z_\alpha\subset Z$ for $\alpha$ in some index set $I$.
Then, we write $Z=\bigsqcup_{\alpha\in I} Z_{\alpha}$ and say that $Z$ is the \textbf{intrinsic disjoint union} of the subspaces $Z_{\alpha}$.
Note that each $Z_\alpha$ is also closed in $Z$.
Second, let $Z_\alpha$, $\alpha\in I$, be a collection of (not necessarily disjoint) topological spaces where $I$ is some index set.
The \textbf{extrinsic disjoint union} of the $Z_\alpha$ consists of the set $Z := \bigcup_{\alpha\in I} Z_{\alpha} \times\cpa{\alpha}$, denoted $\coprod_{\alpha\in I} Z_{\alpha}$, 
together with the canonical injections $i_\alpha:Z_\alpha\to Z$ where $i_\alpha\pa{z}:=\pa{z,\alpha}$.
The set $Z$ is topologized by: $U\subset Z$ is open if and only if $i_\alpha^{-1}\pa{U}$ is open in $Z_\alpha$ for each $\alpha\in I$.
Each $i_\alpha$ is an open and closed embedding.
The extrinsic disjoint union satisfies a universal property:
given maps with fixed target $f_\alpha:Z_\alpha\to Y$ for $\alpha\in I$,
then there exists a unique map $f:Z\to Y$, denoted $\coprod_{\alpha\in I} f_{\alpha}$, such that $f_\alpha = f\circ i_\alpha$ for each $\alpha\in I$.
Note that each extrinsic disjoint union is an intrinsic disjoint union:
$\coprod_{\alpha\in I} Z_{\alpha}=\bigsqcup_{\alpha\in I} Z_{\alpha}\times\cpa{\alpha}$.

\begin{remark}
In general, an extrinsic disjoint union of based coverings of $\bs{Y}$ does not come equipped with a preferred basepoint.
A similar issue arises when picking out components of a given based cover.
Thus, isomorphisms concerning disjoint unions of coverings are isomorphisms in the category $\cov$ unless explicitly stated otherwise.
\end{remark}

\begin{lemma}\label{comps_cover}
Let $p:E\to Y$ be a covering map where $Y$ is locally connected.
Suppose $E=\bigsqcup_{\alpha\in I}E_\alpha$ for some index set $I$, and
let $J\subset I$ be arbitrary.
Define $g:=\rest{p}\bigsqcup_{\alpha\in J} E_\alpha$.
Then, $g:\bigsqcup_{\alpha\in J}E_\alpha \to Y$ is a covering map.
\end{lemma}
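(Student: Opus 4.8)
The plan is to verify the defining local-triviality condition for $g$ directly at each point $y \in Y$, using a single evenly covered neighborhood that is simultaneously good for $p$ and is connected. First I would fix $y \in Y$ and choose a local trivialization $(U, F, \varphi)$ of the original covering $p$ at $y$; since $Y$ is locally connected, I may shrink $U$ to its connected component containing $y$ and thereby assume $U$ is connected (this is exactly the maneuver used in the proof of Lemma~\ref{morphismiscover}). The homeomorphism $\varphi : p^{-1}(U) \to U \times F$ then identifies $p^{-1}(U)$ with a disjoint union of ``sheets'' $\varphi^{-1}(U \times \{d\})$, one for each $d \in F$, each mapped homeomorphically onto $U$ by $p$.

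The key step is to understand how the decomposition $E = \bigsqcup_{\alpha \in I} E_\alpha$ interacts with the sheets over the connected set $U$. Each sheet $S_d := \varphi^{-1}(U \times \{d\})$ is homeomorphic to $U$, hence connected; since the $E_\alpha$ are open and pairwise disjoint and cover $E$, connectedness of $S_d$ forces $S_d \subset E_\alpha$ for a unique $\alpha = \alpha(d) \in I$. Consequently $p^{-1}(U) \cap \bigsqcup_{\alpha \in J} E_\alpha = \bigsqcup_{\alpha\in J} (p^{-1}(U)\cap E_\alpha)$ is precisely the union of those sheets $S_d$ with $\alpha(d) \in J$, i.e. $\varphi$ restricts to a homeomorphism from $g^{-1}(U)$ onto $U \times F'$ where $F' := \{ d \in F \mid \alpha(d) \in J\} \subset F$, with $F'$ discrete. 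Composing with the obvious inclusion, $(U, F', \rest{\varphi}{g^{-1}(U)})$ is a local trivialization of $g$ at $y$, so $g$ is a covering map.

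The main obstacle — really the only place local connectivity is doing work — is the claim that each sheet $S_d$ lies wholly inside a single piece $E_\alpha$; without connectivity of $U$ (and hence of $S_d$) a sheet could be split among several $E_\alpha$, wrecking local triviality of the restriction, just as in the counterexample following Lemma~\ref{morphismiscover}. I would also note two routine points in passing: that $\bigsqcup_{\alpha\in J} E_\alpha$ is genuinely an open (hence locally compatible) subspace of $E$ on which $g = \rest{p}{}$ makes sense and is continuous, and that $g^{-1}(U) = p^{-1}(U) \cap \bigsqcup_{\alpha\in J} E_\alpha$ so that the trivialization really does cover all of $g^{-1}(U)$. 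Neither requires more than the definitions of intrinsic disjoint union recalled above.
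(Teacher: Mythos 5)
Your proof is correct and follows essentially the same route as the paper's: choose a connected evenly covered neighborhood $U$ (using local connectedness of $Y$) and restrict the given trivialization to the set $F'$ of sheets lying in $\bigsqcup_{\alpha\in J}E_\alpha$. The only cosmetic difference is that the paper first reduces, without loss of generality, to the case where each $E_\alpha$ is a component of $E$, whereas you argue directly that each connected sheet lies in a single $E_\alpha$; both versions hinge on exactly that observation.
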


\begin{proof}[Proof of Lemma~\ref{comps_cover}]
As $p$ is a local homeomorphism, $E$ is locally connected and so its components are open and closed in $E$.
Each $E_\alpha$ is open and closed in $E$ and hence is an intrinsic disjoint union of components of $E$.
So, without loss of generality, we assume each $E_\alpha$ is a component of $E$.
Let $y\in Y$ and let $\pa{U,F,\varphi}$ be a local trivialization of $p$ at $y$.
As $Y$ is locally connected, we may assume $U$ is connected.
Define $F':=\cpa{d\in F \mid \varphi^{-1}(y,d)\in \bigsqcup_{\alpha\in J}E_\alpha}$.
Then, $\varphi^{-1}\pa{U\times F'}=g^{-1}\pa{U}$ and $\pa{U,F',\rest{\varphi}}$ is a local trivialization of $g$ at $y$.
\end{proof}

\begin{lemma}\label{imageclosed}
Let $p:E\to Y$ be a covering map where $Y$ is locally connected.
If $C$ is a component of $E$,
then $p\pa{C}$ is closed in $Y$ and, hence, equals a component of $Y$.
\end{lemma}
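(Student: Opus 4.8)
The plan is to show that $p(C)$ is both open and closed in $Y$; since $Y$ is locally connected, its components are open and closed, so the only nonempty open-and-closed subset of $Y$ contained in a single component is that component itself. Openness of $p(C)$ is immediate: $p$ is an open map (every covering map is open, as noted after the definition of covering map), so $p(C)$ is open. The real content is closedness, and here I would invoke Lemma~\ref{comps_cover}.

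**First I would** apply Lemma~\ref{comps_cover} with $I$ the set of components of $E$ (valid since $E$ is locally connected, being a local homeomorphic image of the locally connected $Y$, so $E = \bigsqcup_{\alpha\in I} E_\alpha$ is an intrinsic disjoint union of its components) and with $J := I \setminus \{C\}$. This gives that $g := \rest{p}\bigsqcup_{\alpha \neq C} E_\alpha \to Y$ is a covering map, hence an open map, so its image $p(E \setminus C)$ is open in $Y$. Now I claim $p(C)$ and $p(E\setminus C)$ together cover $Y$ in a way that forces $p(C)$ closed: actually the cleanest route is to observe that $Y \setminus p(C)$ need not equal $p(E \setminus C)$ in general, so I need to be more careful. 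Instead, I would argue directly: let $y \in \overline{p(C)}$, pick a connected evenly covered neighborhood $U$ of $y$, and write $p^{-1}(U) = \bigsqcup_{d \in F} V_d$ with each $\rest{p}{V_d} : V_d \to U$ a homeomorphism. Since $U$ is connected and each $V_d$ is homeomorphic to $U$, each $V_d$ is connected, hence each $V_d$ lies entirely in a single component of $E$. Because $y \in \overline{p(C)}$ and $U$ is a neighborhood of $y$, some point of $p(C) \cap U$ exists, so some $V_d$ meets $C$, hence that $V_d \subseteq C$, hence $y = p(\text{point of } V_d) \in p(C)$. Therefore $p(C)$ is closed.

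**The second sentence of the conclusion** then follows formally: $p(C)$ is a nonempty (assuming $C \neq \emptyset$; if $E = \emptyset$ there are no components, so this is vacuous, and a component is by definition nonempty) open and closed subset of $Y$. Being open, $p(C)$ is a union of components of $Y$ (each component of $Y$ is contained in $p(C)$ or disjoint from it, using that components partition $Y$ and openness lets me split). But $p(C)$ is connected, being the continuous image of the connected set $C$, so it cannot be a union of more than one component; hence $p(C)$ is exactly one component of $Y$.

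**The main obstacle** is getting the closedness argument right without circular reliance on facts about components of $Y\setminus p(C)$. Working locally with a connected evenly covered neighborhood — exploiting that the sheets $V_d$ are connected and therefore each sits inside one component of $E$ — sidesteps this cleanly and only uses local connectivity of $Y$ (to get connected evenly covered neighborhoods) plus the already-established fact that $E$ is locally connected. Everything else (image of connected is connected; open-and-closed connected subset of a locally connected space is a component) is routine.
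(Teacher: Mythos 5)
Your direct argument---take $y\in\overline{p(C)}$, choose a connected evenly covered neighborhood $U$ of $y$, observe that each sheet over $U$ is connected and hence contained in a single component of $E$, so a sheet meeting $C$ lies in $C$ and maps onto $U\ni y$---is precisely the paper's own proof of Lemma~\ref{imageclosed}, and it is correct (just note that in the last step it is the open \emph{and closed} property, not openness alone, that lets you conclude the connected set $p(C)$ is a full component). The abandoned detour through Lemma~\ref{comps_cover} is unnecessary, as you recognized yourself, so the proposal matches the paper's approach.
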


\begin{proof}[Proof of Lemma~\ref{imageclosed}]
As $p$ is a local homeomorphism, $E$ is locally connected and so $C$ is open and closed in $E$.
As $p$ is open, $p\pa{C}$ is open in $Y$.
Let $y$ be in $\overline{p\pa{C}}$, the closure of $p\pa{C}$ in $Y$.
Let $U$ be a connected, evenly covered neighborhood of $y$ in $Y$.
As $U\cap p\pa{C}\neq\emptyset$, there exists a component $V$ of $p^{-1}\pa{U}$ that intersects $C$.
As $\rest{p}V:V\to U$ is a homeomorphism, $V$ is connected and so $V\subset C$.
It follows that $y\in p\pa{C}$, and so $p\pa{C}$ is closed in $Y$.
\end{proof}

\begin{corollary}\label{decompose_cover_componentwise}
Let $p:E\to Y$ be a covering map where $Y$ is locally connected.
Let $E'\subset E$ be a union of components of $E$.
Then, $p\pa{E'}$ is a union of components of $Y$.
Further, if $Y'\subset Y$ is a union of components of $Y$ such that $p\pa{E'}\subset Y'$,
then $\rest{p}:E'\to Y'$ is a covering map.
\end{corollary}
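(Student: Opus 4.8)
The plan is to deduce the corollary by assembling Lemmas~\ref{imageclosed}, \ref{comps_cover}, and~\ref{restcov}. Since $p$ is a covering map it is a local homeomorphism, so $E$ is locally connected and each component of $E$ is open and closed in $E$; thus $E$ is the intrinsic disjoint union of its components, and ``$E'$ is a union of components of $E$'' means precisely that $E'=\bigsqcup_{\alpha\in J}C_\alpha$, where $\cpa{C_\alpha}_{\alpha\in I}$ is the family of components of $E$ and $J\subset I$.

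For the first assertion, with $E'=\bigsqcup_{\alpha\in J}C_\alpha$ as above we have $p\pa{E'}=\bigcup_{\alpha\in J}p\pa{C_\alpha}$, and Lemma~\ref{imageclosed} (which uses local connectivity of $Y$) gives that each $p\pa{C_\alpha}$ is a component of $Y$. Hence $p\pa{E'}$ is a union of components of $Y$.

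For the second assertion, apply Lemma~\ref{comps_cover} with index set $I$ the components of $E$ and the subset $J$ above: the map $g:=\rest{p}{E'}:E'\to Y$ is a covering map. Because $p\pa{E'}\subset Y'$ we have $g^{-1}\pa{Y'}=E'$, so Lemma~\ref{restcov} applied to the covering map $g$ and the subset $Y'\subset Y$ shows that $\rest{g}{g^{-1}\pa{Y'}}:g^{-1}\pa{Y'}\to Y'$, which is literally $\rest{p}:E'\to Y'$, is a covering map.

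The argument is essentially bookkeeping and I do not expect a genuine obstacle; the points meriting care are the identification $g^{-1}\pa{Y'}=E'$ (where the hypothesis $p\pa{E'}\subset Y'$ enters, giving $E'\subset g^{-1}\pa{Y'}$, the reverse inclusion being automatic) and keeping track of where local connectivity of $Y$ is used---it is what makes the components of $E$ open and closed and what powers Lemmas~\ref{imageclosed} and~\ref{comps_cover}. An alternative order yielding the same conclusion first notes that $Y'$, being a union of components of the locally connected space $Y$, is open and closed and hence locally connected, so $p^{-1}\pa{Y'}$ is a union of components of $E$ and $\rest{p}:p^{-1}\pa{Y'}\to Y'$ is a covering map by Lemma~\ref{restcov}; one then passes to the sub-disjoint-union $E'\subset p^{-1}\pa{Y'}$ via Lemma~\ref{comps_cover} over the base $Y'$. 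Either order works.
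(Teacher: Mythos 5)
Your argument is correct and follows essentially the same route as the paper: Lemma~\ref{imageclosed} for the first assertion, then Lemma~\ref{comps_cover} to get $\rest{p}:E'\to Y$ a covering map and Lemma~\ref{restcov} to restrict to $Y'$. You merely make explicit the bookkeeping the paper leaves implicit (that the components of $E$ are open and closed so $E'$ is an intrinsic disjoint union, and that $p\pa{E'}\subset Y'$ gives $g^{-1}\pa{Y'}=E'$), which is fine.
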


\begin{proof}[Proof of Corollary~\ref{decompose_cover_componentwise}]
The first conclusion follows by Lemma~\ref{imageclosed}.
Lemma~\ref{comps_cover} implies $\rest{p}:E'\to Y$ is a covering map,
and then Lemma~\ref{restcov} implies $\rest{p}:E'\to Y'$ is a covering map.
\end{proof}

\begin{lemma}\label{cover_component}
Let $Y$ be locally connected and let $B$ be a component of $Y$.
If $p:E\to B$ is a covering map, then $q:E\to Y$ given by $q(e):=p(e)$ is a covering map.
\end{lemma}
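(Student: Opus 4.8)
The plan is to verify the defining local-triviality condition for $q$ at each point $y\in Y$ directly, using the hypothesis that $Y$ is locally connected to split into two cases.

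The key preliminary remark is that, since $Y$ is locally connected, every component of $Y$ is open (and closed) in $Y$. In particular $B$ is open in $Y$, so every subset of $B$ that is open in $B$ is also open in $Y$. Note also that $q$ and $p$ have the same domain $E$ and the same underlying function, so $q^{-1}\pa{A}=p^{-1}\pa{A\cap B}$ for every $A\subset Y$; in particular $q^{-1}\pa{A}=p^{-1}\pa{A}$ whenever $A\subset B$.

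First I would treat the case $y\in B$. Pick a local trivialization $\pa{U,F,\varphi}$ of $p$ at $y$. Then $U$ is open in $Y$ by the remark above, $q^{-1}\pa{U}=p^{-1}\pa{U}$, and $\varphi:p^{-1}\pa{U}\to U\times F$ is a homeomorphism over $U$; hence $\pa{U,F,\varphi}$ is already a local trivialization of $q$ at $y$. For the case $y\notin B$, let $B'$ be the component of $Y$ containing $y$. Then $B'$ is an open neighborhood of $y$ in $Y$ that is disjoint from $B$, so $q^{-1}\pa{B'}=p^{-1}\pa{B'\cap B}=\emptyset$; taking the fiber $F$ to be $\emptyset$, the unique (empty) map $q^{-1}\pa{B'}\to B'\times\emptyset$ is a local trivialization of $q$ at $y$. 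This exhausts all $y\in Y$, so $q$ is a fiber bundle projection, and since each fiber of $q$ is either a fiber of $p$ (hence discrete) or empty, $q$ is a covering map.

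There is no real obstacle; the only point that requires a moment's thought is the case $y\notin B$, which relies on the standing convention that fibers of covering maps may be empty, so that $B'$ qualifies as an evenly covered neighborhood of $y$. Alternatively, one could phrase the argument by noting that $\rest{q}:q^{-1}\pa{B}\to B$ coincides with $p$ and hence is a covering map, that $\rest{q}:q^{-1}\pa{Y\setminus B}\to Y\setminus B$ is the empty covering, and that these restrictions over the open-and-closed pieces $B$ and $Y\setminus B$ of $Y$ patch together to a covering of $Y$.
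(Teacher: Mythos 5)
Your proof is correct and follows essentially the same route as the paper: use local connectedness to make $B$ open so that a local trivialization of $p$ at $y\in B$ serves as one for $q$, and use an empty-fiber trivialization over an open set missing $B$ for $y\notin B$. The only cosmetic difference is that you take the component $B'$ of $y$ as the evenly covered neighborhood in the second case, whereas the paper uses $Y-B$ (open since $B$ is closed); both work equally well.
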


\begin{proof}[Proof of Lemma~\ref{cover_component}]
Components of $Y$ are open and closed in $Y$.
So, any local trivialization $\pa{U,F,\varphi}$ of $p$ at $y\in B$ is also a local trivialization of $q$ at $y\in Y$.
For $y\in Y-B$, $\pa{Y-B,\emptyset,\emptyset}$ is a local trivialization of $q$ at $y$.
\end{proof}

\begin{lemma}\label{union_covers}
Let $Y$ be locally path-connected and semilocally simply-connected.
For each $\alpha$ in some index set $I$, let $p_\alpha:E_\alpha \to Y$ be a covering map.
Define \hbox{$E:=\coprod_{\alpha\in I} E_\alpha$} and $p:=\coprod_{\alpha\in I} p_\alpha$.
Then, $p:E\to Y$ is a covering map.
\end{lemma}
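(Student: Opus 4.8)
The plan is to show that every point $y\in Y$ has an open neighborhood $U$ that is \emph{simultaneously} evenly covered by all of the maps $p_\alpha$; granting this, the trivializations $p_\alpha^{-1}(U)\cong U\times F_\alpha$, with each $F_\alpha$ discrete, stack up along the open pieces $E_\alpha$ of $E$ to give a homeomorphism $p^{-1}(U)=\bigsqcup_{\alpha\in I}p_\alpha^{-1}(U)\cong U\times F$ over $U$, where $F:=\coprod_{\alpha\in I}F_\alpha$ is again discrete, so $U$ is an evenly covered neighborhood of $y$ for $p$ and the lemma follows. The whole content is the word \emph{simultaneously}: each $p_\alpha$ separately has plenty of evenly covered neighborhoods of $y$, but an intersection of such over an infinite index set need not be open, so uniformity must be extracted from the hypotheses on $Y$. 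The key point is that local path-connectedness together with semilocal simple-connectedness produces a neighborhood of $y$ that is \emph{universally} evenly covered, i.e.\ evenly covered by every covering map with base $Y$ at once.

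First I would construct such a $U$. Since $Y$ is semilocally simply-connected, choose an open $V\ni y$ for which the inclusion-induced homomorphism $\pi_1(V,y)\to\pi_1(Y,y)$ is trivial; since $Y$ is locally path-connected, let $U$ be the path component of $y$ in $V$. Then $U$ is open in $Y$, is path-connected and locally path-connected, and $\pi_1(U,y)\to\pi_1(Y,y)$ factors through $\pi_1(V,y)\to\pi_1(Y,y)$, hence is trivial.

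Next I would prove the key claim: if $U$ is open, path-connected, locally path-connected, and $\pi_1(U,y)\to\pi_1(Y,y)$ is trivial, then $U$ is evenly covered by every covering map $p':E'\to Y$. By Lemma~\ref{restcov}, the restriction $q:=p'|_{p'^{-1}(U)}:p'^{-1}(U)\to U$ is a covering map; since $p'$ is a local homeomorphism, $p'^{-1}(U)$ is locally path-connected, so its components are open and path-connected, and by Lemma~\ref{comps_cover} the restriction $q|_C:C\to U$ to any component $C$ is again a covering map. Each such $C$ surjects onto the connected set $U$ (Lemma~\ref{imageclosed}), so it contains a point $\tilde y$ over $y$; given any loop $\gamma$ in $U$ at $y$, its lift through $p'$ starting at $\tilde y$ stays in the connected set $C$ and closes up because $\gamma$ is null-homotopic in $Y$ (path and homotopy lifting for $p'$), whence $(q|_C)_*:\pi_1(C,\tilde y)\to\pi_1(U,y)$ is surjective. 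As a covering map induces an injection on fundamental groups, $(q|_C)_*$ is an isomorphism, so $q|_C$ has one-point fibers and is therefore a homeomorphism. Thus $q$ is a trivial covering of $U$, i.e.\ $U$ is evenly covered by $p'$. (Alternatively, one can use the lifting criterion to obtain a section of $q$ through each point of $p'^{-1}(y)$---uniqueness of lifts being Lemma~\ref{equalizer_lift}---and assemble these into a homeomorphism $p'^{-1}(U)\cong p'^{-1}(y)\times U$ over $U$, using that sections of local homeomorphisms are open embeddings.)

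Applying this claim to every $p_\alpha$ with the single fixed $U$ completes the proof as outlined in the first paragraph. I expect the only real obstacle to be the uniform choice of $U$ above; passing from the trivializations of the $p_\alpha$ to one for $p$ is routine bookkeeping with intrinsic and extrinsic disjoint unions, and the universally-evenly-covered claim is a standard consequence of elementary covering-space theory (e.g.\ as in Hatcher, Ch.~1).
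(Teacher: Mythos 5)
Your proposal is correct and follows essentially the same route as the paper: both proofs hinge on choosing, via semilocal simple-connectedness and local path-connectedness, a single path-connected open $U\ni y$ with $\pi_1(U,y)\to\pi_1(Y,y)$ trivial, observing that this one $U$ is evenly covered by every $p_\alpha$ simultaneously, and then stacking the trivializations over $U$. The only difference is cosmetic: you verify the even-covering claim by showing each component of $p_\alpha^{-1}(U)$ is a one-sheeted (hence homeomorphic) cover of $U$ via the sheet-number/$\pi_1$ argument, whereas the paper constructs explicit lifts $h_{\alpha,e}$ of the inclusion $U\hookrightarrow Y$ through each fiber point---precisely the alternative you mention parenthetically.
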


\begin{proof}[Proof of Lemma~\ref{union_covers}]
Let $y\in Y$. Let $U$ be an open neighborhood of $y$ in $Y$ such that $i_\sharp:\pi_1\pa{U,y}\to\pi_1\pa{Y,y}$ is trivial.
Replacing $U$ with its path component containing $y$, we can and do further assume $U$ is path-connected.
For each $\alpha\in I$ and $e\in p_{\alpha}^{-1}(y)$, there exists a unique lift $h_{\alpha,e}$ of $i:U\to Y$ to $E_\alpha$ such that $h_{\alpha,e}\pa{y}=e$ by Propositions~1.33 and~1.34 of~\cite{hatcher}.
The image of $h_{\alpha,e}$ contains a unique point, $e$, of $p_\alpha^{-1}\pa{y}$ since $i$ is injective.
Further, lifts $h_{\alpha,e}$ and $h_{\alpha,e'}$ for distinct points $e$ and $e'$ in the fiber $p_\alpha^{-1}\pa{y}$ have disjoint images in $E_\alpha$ by homotopy lifting~\cite[Prop.~1.30]{hatcher} (since $i_\sharp$ is trivial).
We claim $h_{\alpha,e}$ is an open map.
Let $u\in U$. Let $W\subset U$ be a path-connected, open neighborhood of $u$ in $Y$ that is evenly covered by $p_\alpha$.
There exists a unique component $C$ of $p_\alpha^{-1}\pa{W}$ that is contained in the image of $h_{\alpha,e}$.
Evidently, $h_{\alpha,e}\pa{W}=C$. Thus, $h_{\alpha,e}$ is open as claimed.
Hence, each $h_{\alpha,e}$ is an embedding.
Unique path lifting readily implies that if $z\in p_\alpha^{-1}(U)$, then $z\in\im{h_{\alpha,e}}$ for a unique $e\in p_\alpha^{-1}(y)$;
this observation yields the map $\delta:p_\alpha^{-1}(U)\to p_\alpha^{-1}(y)$ given by $z\mapsto e$.
Combining these observations, we get:
\[
	p_\alpha^{-1}\pa{U}=\bigsqcup_{e\in p_\alpha^{-1}(y)}\im{h_{\alpha,e}}.
\]
Define the map $\varphi_\alpha:p_\alpha^{-1}(U)\to U\times p_\alpha^{-1}(y)$ by $\varphi_\alpha(z):=\pa{p_\alpha(z),\delta(z)}$.
Evidently, $\psi_\alpha:U\times p_\alpha^{-1}(y) \to p_\alpha^{-1}(U)$ given by $\psi_\alpha\pa{u,e}:=h_{\alpha,e}(u)$ is the continuous inverse of $\varphi_\alpha$.
Thus, $\pa{U,p_{\alpha}^{-1}(y),\varphi_{\alpha}}$ is a local trivialization of $p_\alpha$ at $y$.
Define the homeomorphism:
\begin{equation*}\begin{split}
\xymatrix@R=0pt{
	p^{-1}\pa{U}	\ar[r]^-{\psi}	&	U\times\coprod_{\alpha\in I} p_\alpha^{-1}\pa{y}\\
	\pa{z,\alpha}		\ar@{|-{>}}[r]	&	\pa{p_\alpha\pa{z}, i_\alpha\circ\pr{2}\circ\varphi_\alpha\pa{z}}}
\end{split}\end{equation*}
Then, $\pa{U,\coprod_{\alpha\in I} p_{\alpha}^{-1}(y),\psi}$ is a local trivialization of $p$ at $y$.
\end{proof}

Without some other additional restrictions, Lemmas~\ref{comps_cover} and~\ref{union_covers} both become false if any of the local niceness hypotheses on $Y$ are omitted, as shown by the next three examples.

\begin{example}
Let $Y:=\cpa{0}\cup\cpa{1/k \mid k\in\N}\subset\R$ and let $\pr{1}:Y\times\N_{0}\to Y$, which is a trivial cover. Define:
\begin{align*}
	E_{1}	&:=	\cpa{\pa{0,n}\mid n\in\N} \cup \cpa{\pa{1/k,n}\mid k\in\N \; \tn{\&} \; 1\leq n \leq k} \; \tn{and}\\
	E_{2}	&:=	\pa{Y\times\N_{0}} - E_{1}.
\end{align*}
Notice that $Y\times\N_{0}=E_{1}\sqcup E_{2}$.
Neither of the (surjective) maps \hbox{$\rest{\pr{1}}:E_{1}\to Y$} nor $\rest{\pr{1}}:E_{2}\to Y$ is a covering map (consider cardinalities of fibers).
\end{example}

\begin{example}
Let $Y:=\cpa{0}\cup\cpa{1/k \mid k\in\N}\subset\R$.
Let $D$ be an uncountable, discrete space and fix some $d'\in D$.
Let $W:=Y\times D\times\N$ and write $\pr{1}:W\to Y$.
For each $n\in\N$, define:
\[
	E_n:=\pa{ Y\times\cpa{d'}\times\cpa{n} } \cup  \pa{ \cpa{1/n}\times D\times\cpa{n} } \subset W
\]
and let $p_n$ denote the (nontrivial) covering map $\rest{\pr{1}}: E_{n} \to Y$.
The map $\coprod_{n\in\N} p_n$ is not a covering map (again, consider cardinalities of fibers).
\end{example}

\begin{example}\label{he_ex}
Let $Y\subset\R^2$ be the union of mutually tangent circles $C_n$, $n\in \N$, where $C_n$ has radius $1/n$ (see Figure~\ref{earring}).
\begin{figure}[h!]
    \centerline{\includegraphics{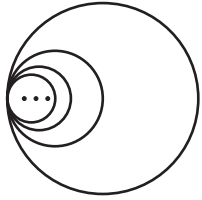}}
    \caption{Hawaiian earring $Y\subset\R^2$.}
    \label{earring}
\end{figure}
The space $Y$ is the well known \emph{Hawaiian earring} and is not semilocally simply-connected.
Let $y_0\in Y$ denote the wild point where the circles intersect.
In each $C_n$, let $y_n$ denote point antipodal to $y_0$.
For each $n\in\N$, let $E_n$ be the space obtained as follows:
begin with the disjoint union of two copies of $Y$,
cut $C_n$ at $y_n$ in each copy of $Y$ leaving four loose strands,
finally glue the four endpoints of the loose strands together in the obvious way thus obtaining a connected, double cover $p_n:E_{n}\to Y$.
The map $\coprod_{n\in\N} p_n$ is not a covering map (there is no local trivialization at $y_0\in Y$).
\end{example}

\begin{lemma}\label{isodu}
Let $Y$ be a topological space.
For each $\alpha$ in some index set $I$, let $p_\alpha:E_\alpha \to Y$ and $p'_\alpha:E'_\alpha \to Y$ be covering maps.
Let $E:=\coprod_{\alpha\in I}E_\alpha$ and let $p$ denote the map $\coprod_{\alpha\in I}p_\alpha:E\to Y$ (not necessarily a covering map).
Similarly, let $E':=\coprod_{\alpha\in I}E'_\alpha$ and let $p'$ denote the map $\coprod_{\alpha\in I}p'_\alpha:E'\to Y$.
Suppose that $E_\alpha\cong E'_\alpha$ for each $\alpha\in I$.
Then, there is a homeomorphism $t:E\to E'$ such that $p=p'\circ t$.
In particular, if $p$ or $p'$ is a covering map, then $p$ and $p'$ are both covering maps and $p\cong p'$ (isomorphism in $\cov\pa{Y}$).
\end{lemma}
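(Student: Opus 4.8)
The plan is to glue the given fiberwise isomorphisms together into a single homeomorphism using the universal property of the extrinsic disjoint union, and then to read off the ``in particular'' clause from Lemma~\ref{homeotocover}. For each $\alpha\in I$ choose an isomorphism of covers $t_\alpha:E_\alpha\to E'_\alpha$; by definition each $t_\alpha$ is a homeomorphism satisfying $p'_\alpha\circ t_\alpha=p_\alpha$. Write $i_\alpha:E_\alpha\to E$ and $i'_\alpha:E'_\alpha\to E'$ for the canonical injections. Applying the universal property of $E=\coprod_{\alpha\in I}E_\alpha$ to the family of maps $i'_\alpha\circ t_\alpha:E_\alpha\to E'$ produces a unique map $t:E\to E'$ with $t\circ i_\alpha=i'_\alpha\circ t_\alpha$ for every $\alpha$; explicitly, $t\pa{z,\alpha}=\pa{t_\alpha\pa{z},\alpha}$. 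Symmetrically, the family $i_\alpha\circ t_\alpha^{-1}:E'_\alpha\to E$ induces a unique map $s:E'\to E$ with $s\circ i'_\alpha=i_\alpha\circ t_\alpha^{-1}$.

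Next I would check that $t$ is a homeomorphism with inverse $s$, using the uniqueness clause of the universal property: both $s\circ t$ and $\tn{id}_E$ are maps $E\to E$ which, when precomposed with each $i_\alpha$, return $i_\alpha$ (a one-line computation for $s\circ t$), hence $s\circ t=\tn{id}_E$; likewise $t\circ s=\tn{id}_{E'}$. The identity $p'\circ t=p$ follows the same way: precomposing either side with $i_\alpha$ yields $p'_\alpha\circ t_\alpha=p_\alpha=p\circ i_\alpha$, so uniqueness forces $p'\circ t=p$. This establishes the desired homeomorphism $t:E\to E'$ with $p=p'\circ t$.

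Finally, suppose $p$ is a covering map (the case of $p'$ is symmetric). Then $t:E\to E'$, $p:E\to Y$, $p':E'\to Y$ with $p=p'\circ t$ is precisely the hypothesis of Lemma~\ref{homeotocover} with the arrow of the homeomorphism reversed, so $p'$ is a covering map; moreover $t$, being a homeomorphism with $p'\circ t=p$, is by definition an isomorphism $p\to p'$ in $\cov\pa{Y}$, whence $p\cong p'$. I do not anticipate a real obstacle here: the only points needing care are that the universal property of the extrinsic disjoint union must be invoked with an arbitrary target space (not just $Y$), and that covering-specific tools such as Lemma~\ref{homeotocover} may be applied only at the very end, since a priori neither $p$ nor $p'$ is a covering map.
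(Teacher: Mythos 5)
Your proposal is correct and follows essentially the same route as the paper: glue the isomorphisms $t_\alpha$ into $t:E\to E'$ via the universal property of the extrinsic disjoint union, verify $p=p'\circ t$, and invoke Lemma~\ref{homeotocover} (in both directions) for the final clause. The only cosmetic difference is that you certify $t$ is a homeomorphism by building the continuous inverse $s$ from the maps $i_\alpha\circ t_\alpha^{-1}$ and using uniqueness, whereas the paper checks bijectivity and openness of $t$ directly; both are fine.
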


\begin{proof}[Proof of Lemma~\ref{isodu}]
For each $\alpha\in I$, let $i_\alpha:E_\alpha\to E$ and $i'_\alpha:E'_\alpha\to E'$ be the canonical injections,
and let $t_\alpha:E_\alpha\to E'_\alpha$ be a homeomorphism such that $p_\alpha=p'_\alpha\circ t_\alpha$.
Hence, $i'_\alpha\circ t_\alpha:E_\alpha\to E'$ for each $\alpha\in I$ and the universal property of disjoint union yields the map $t:E\to E'$.
Evidently, $t\pa{e,\alpha}=\pa{t_\alpha\pa{e},\alpha}$.
It is easy to check that $t$ is a bijection and $p=p'\circ t$.
To show $t$ is open, it suffices to show each $i'_\alpha \circ t_\alpha$ is open. But, this is immediate since $t_\alpha$ is a homeomorphism and $i'_\alpha$ is open.
The final conclusion follows by Lemma~\ref{homeotocover}.
\end{proof}

\subsection{Pullback of Disjoint Unions of Covers}\label{ss:pbducov}

\begin{lemma}\label{inverseopenclosed}
Given a diagram of maps: 
\begin{equation}\begin{split}\label{basicsubset}
\xymatrix{
    							&	Z	\ar[d]^-{g}\\
    X	\ar[r]^{f}	&	Y}
\end{split}\end{equation}
If $S\subset Z$ is open (respectively closed), then $\fa\pa{S}$ is open (respectively closed) in $\fa\pa{Z}$,
and $\fa\pa{\rest{g}S}=\rest{\fa\pa{g}}\fa\pa{S}$.
\end{lemma}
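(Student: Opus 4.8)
The plan is to identify $\fa\pa{S}$ explicitly as a subspace of $\fa\pa{Z}$ and then read off both conclusions from elementary properties of the subspace and product topologies. Directly from the defining formula~\eqref{pullback_set}, one has
\[
\fa\pa{S}=\cpa{\pa{x,z}\in X\times Z\mid z\in S \tn{ and } f\pa{x}=g\pa{z}}=\fa\pa{Z}\cap\pa{X\times S}
\]
as sets. The first thing to check is that the topology placed on $\fa\pa{S}$ (as a subspace of $X\times S$) agrees with its topology as a subspace of $\fa\pa{Z}$: indeed $X\times S$ carries the subspace topology inherited from $X\times Z$ (a product of subspaces is a subspace of the product), and a subspace of a subspace is a subspace, so all the relevant topologies on the set $\fa\pa{Z}\cap\pa{X\times S}$ coincide.

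Granting this, the open/closed assertion is immediate: if $S$ is open in $Z$, then $X\times S$ is open in $X\times Z$, hence $\fa\pa{S}=\fa\pa{Z}\cap\pa{X\times S}$ is open in the subspace $\fa\pa{Z}$; the closed case is identical with ``open'' replaced by ``closed'' throughout. (For the open case one could instead invoke Lemma~\ref{pullbacktriangle} applied to the commutative triangle $S\hookrightarrow Z\xrightarrow{g}Y$: the inclusion of an open subset is an injective open map, so $\fa$ of it is an injective open map, i.e.\ an open embedding whose image is $\fa\pa{S}$.) For the naturality equation, recall that $\fa\pa{g}$ is by definition the restriction of $\pr{1}:X\times Z\to X$ to $\fa\pa{Z}$, while $\fa\pa{\rest{g}S}$ is the restriction of $\pr{1}:X\times S\to X$ to $\fa\pa{S}$; under the compatible inclusions $X\times S\subset X\times Z$ and $\fa\pa{S}\subset\fa\pa{Z}$ both maps are literally $\rest{\pr{1}}\fa\pa{S}$, so $\fa\pa{\rest{g}S}=\rest{\fa\pa{g}}\fa\pa{S}$.

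The argument is entirely routine; the only point that deserves a moment's attention is the remark that $\fa\pa{S}$ inherits the same topology whether viewed inside $X\times S$ or inside $\fa\pa{Z}$, since this is exactly what makes ``open (closed) in $\fa\pa{Z}$'' the meaningful reading of the statement. I do not anticipate any genuine obstacle.
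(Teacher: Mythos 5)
Your proof is correct, and it takes a slightly different (and if anything more self-contained) route than the paper's. The paper proves the lemma by pulling back the inclusion $i:S\to Z$ along $f$ via Lemma~\ref{pullbacktriangle}, obtaining the diagram~\eqref{subset_pullback}, and then observing that $\fa\pa{S}=h^{-1}\pa{S}$ where $h=\wt{f}:\fa\pa{Z}\to Z$; openness/closedness then follows from continuity of $\wt{f}$, and the map identity from commutativity of that diagram. You instead work entirely inside the product, identifying $\fa\pa{S}=\fa\pa{Z}\cap\pa{X\times S}$ and deducing the topological claim from the fact that $X\times S$ is open (resp.\ closed) in $X\times Z$, with the map identity read off because both sides are literally $\rest{\pr{1}}\fa\pa{S}$. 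The two arguments encode the same elementary observation (indeed $h^{-1}\pa{S}=\fa\pa{Z}\cap\pa{X\times S}$ since $h$ is the restricted second projection), but yours avoids invoking Lemma~\ref{pullbacktriangle} and $\wt{f}$ altogether, at the modest cost of the explicit check that the subspace topologies on $\fa\pa{S}$ coming from $X\times S$ and from $\fa\pa{Z}$ agree --- a point you handle correctly and which the paper leaves implicit. No gaps.
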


\begin{proof}[Proof of Lemma~\ref{inverseopenclosed}]
Let $i:S\to Z$ be inclusion.
By Lemma~\ref{pullbacktriangle}, pullback yields the commutative diagram:
\begin{equation}\label{subset_pullback}\begin{split}
\xymatrix{
    \fa\pa{S}	\ar[rrr]	\ar[rd]^{\fa\pa{\rest{g}S}}	\ar[dd]_{\fa\pa{i}}	&	&	&	S  \ar[dl]_{\rest{g}S}	\ar[dd]^{i}\\
    &	X	\ar[r]^-{f}																																					&	Y\\
    \fa\pa{Z}	\ar[rrr]^{\wt{f}=h}	\ar[ru]_{\fa\pa{g}}																		&	&	&	Z  \ar[ul]^{g}}
\end{split}\end{equation}
where $\fa\pa{i}$ is inclusion. Let $h$ denote the map $\wt{f}:f^{\ast}\pa{Z}\to Z$.
The result follows since $h^{-1}\pa{S}=f^{\ast}\pa{S}$ as subsets of $f^{\ast}\pa{Z}$,
and by commutativity of~\ref{subset_pullback}.
\end{proof}

\begin{proposition}[\textbf{Pullback of intrinsic disjoint union}]\label{pbi}
Let $f:X\to Y$ be a given map where $Y$ is locally connected.
Let $p:E\to Y$ be a covering map. 
Suppose that $E=\bigsqcup_{\alpha\in I} E_\alpha$ for some index set $I$.
Then:
\begin{equation}\label{pbidu}
	\fa\pa{\bigsqcup_{\alpha\in I} E_\alpha} = \bigsqcup_{\alpha\in I} \fa\pa{E_\alpha}
\end{equation}
and, for each $\alpha\in I$, the following is a covering map:
\begin{equation}\label{pbidumap}
	\rest{\fa\pa{p}}\fa\pa{E_\alpha}=\fa\pa{\rest{p}E_\alpha}:\fa\pa{E_\alpha}\to X.
\end{equation}
\end{proposition}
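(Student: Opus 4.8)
The plan is to carry out the component decomposition \emph{upstairs} over $Y$, where local connectedness is available, and only afterward pull back; one cannot split $\fa\pa{p}:\fa\pa{E}\to X$ directly over $X$, since $X$ is not assumed locally connected. Concretely, I would first invoke Lemma~\ref{comps_cover} with $J=\cpa{\alpha}$: since $E=\bigsqcup_{\alpha\in I}E_\alpha$ and $Y$ is locally connected, each restriction $\rest{p}E_\alpha:E_\alpha\to Y$ is a covering map. This is the one place the hypothesis on $Y$ is really spent.

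Next I would prove the decomposition \eqref{pbidu}. At the level of sets, $\fa\pa{E}=\cpa{(x,e)\mid f(x)=p(e)}$, and since each $e$ lies in exactly one $E_\alpha$, this set is the union of the pairwise disjoint sets $\fa\pa{\rest{p}E_\alpha}$. For the topology, each $E_\alpha$ is open and closed in $E$, so Lemma~\ref{inverseopenclosed} shows $\fa\pa{E_\alpha}$ is open and closed in $\fa\pa{E}$; moreover the proof of that lemma, via Lemma~\ref{pullbacktriangle} (which carries an inclusion to an inclusion and an open map to an open map), exhibits $\fa\pa{E_\alpha}\hookrightarrow\fa\pa{E}$ as an open embedding, so the intrinsic pullback topology on $\fa\pa{E_\alpha}$ agrees with its subspace topology inside $\fa\pa{E}$. (Alternatively, $X\times E_\alpha$ is open in $X\times E$ with the product topology, which gives the same identification directly.) Hence $\fa\pa{E}$ is the intrinsic disjoint union of the $\fa\pa{E_\alpha}$, which is \eqref{pbidu}.

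Finally, Lemma~\ref{inverseopenclosed} also yields the identity of maps $\rest{\fa\pa{p}}\fa\pa{E_\alpha}=\fa\pa{\rest{p}E_\alpha}$ required in \eqref{pbidumap}. Since $\rest{p}E_\alpha:E_\alpha\to Y$ is a covering map by the first step, Lemma~\ref{pullbackiscover} shows its pullback $\fa\pa{\rest{p}E_\alpha}:\fa\pa{E_\alpha}\to X$ is a covering map, completing the proof. There is no serious obstacle here; the only things to watch are the order of operations flagged at the outset and the bookkeeping reconciling the three descriptions of $\fa\pa{E_\alpha}$ (as the pullback of $\rest{p}E_\alpha$ along $f$, as a subspace of $\fa\pa{E}$, and as $\fa\pa{p}^{-1}$ of the relevant part of $X$), all of which the cited lemmas handle.
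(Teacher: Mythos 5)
Your proof is correct and follows essentially the same route as the paper's: Lemma~\ref{comps_cover} makes each $\rest{p}E_\alpha$ a covering of $Y$, Lemma~\ref{pullbackiscover} then handles \eqref{pbidumap}, and the definition of pullback together with Lemma~\ref{inverseopenclosed} gives \eqref{pbidu}. Your extra remarks about reconciling the pullback topology on $\fa\pa{E_\alpha}$ with its subspace topology in $\fa\pa{E}$ simply make explicit what the paper leaves implicit.
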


\begin{proof}[Proof of Proposition~\ref{pbi}]
For each $\alpha\in I$, the map $\rest{p}E_\alpha:E_\alpha\to Y$ is a covering map by Lemma~\ref{comps_cover}, and hence
$\fa\pa{\rest{p}E_\alpha}:\fa\pa{E_\alpha}\to X$ is a covering map by Lemma~\ref{pullbackiscover}.
Property~\eqref{pbidu} follows by the definition of pullback~\eqref{pullback_set} and by Lemma~\ref{inverseopenclosed}.
The equality in~\eqref{pbidumap} is immediate by Lemma~\ref{inverseopenclosed}. 
\end{proof}

\begin{proposition}[\textbf{Pullback of extrinsic disjoint union}]\label{pbe}
Let $f:X\to Y$ be a given map where 
 $Y$ is locally path-connected and semilocally simply-connected.
For each $\alpha$ in some index set $I$, let $p_\alpha:E_\alpha\to Y$ be a covering map.
Then, there is an isomorphism (typically not an equality) in $\cov\pa{X}$:
\begin{equation}
	\fa\pa{\coprod_{\alpha\in I} E_\alpha} \cong \coprod_{\alpha\in I}\fa\pa{E_\alpha}.
\end{equation}
In particular, the map $\coprod_{\alpha\in I}\fa\pa{p_\alpha}:\coprod_{\alpha\in I}\fa\pa{E_\alpha} \to X$ is a covering map.
\end{proposition}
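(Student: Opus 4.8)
\emph{Proof proposal.} The plan is to reduce Proposition~\ref{pbe} to the intrinsic case already handled in Proposition~\ref{pbi}, exploiting that an extrinsic disjoint union is the intrinsic disjoint union of its canonically injected pieces. Write $E:=\coprod_{\alpha\in I}E_\alpha$ and $p:=\coprod_{\alpha\in I}p_\alpha$. The first step is to invoke Lemma~\ref{union_covers}: since $Y$ is locally path-connected and semilocally simply-connected, $p:E\to Y$ is a covering map. This is the only place where the semilocal hypothesis enters, and it is where all the real work is concentrated. Consequently, Lemma~\ref{pullbackiscover} shows that $\fa\pa{p}:\fa\pa{E}\to X$ is a covering map.

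Next I would recall that $E=\bigsqcup_{\alpha\in I}\pa{E_\alpha\times\cpa{\alpha}}$ as an intrinsic disjoint union, the piece $E_\alpha\times\cpa{\alpha}$ being the image of the canonical injection $i_\alpha:E_\alpha\to E$. Since local path-connectedness implies local connectedness, Proposition~\ref{pbi} applies and gives
\[
	\fa\pa{E}=\bigsqcup_{\alpha\in I}\fa\pa{E_\alpha\times\cpa{\alpha}},
\]
with each restriction $\rest{\fa\pa{p}}\fa\pa{E_\alpha\times\cpa{\alpha}}=\fa\pa{\rest{p}\pa{E_\alpha\times\cpa{\alpha}}}:\fa\pa{E_\alpha\times\cpa{\alpha}}\to X$ a covering map.

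It remains to identify each piece $\fa\pa{E_\alpha\times\cpa{\alpha}}$ with $\fa\pa{E_\alpha}$. Regarded as a map onto $E_\alpha\times\cpa{\alpha}$, the canonical injection $i_\alpha$ is a homeomorphism with $p\circ i_\alpha=p_\alpha$, hence is an isomorphism of covers of $Y$ from $p_\alpha$ to $\rest{p}\pa{E_\alpha\times\cpa{\alpha}}$. By functoriality of pullback (Lemma~\ref{pullbackfunctor} and Corollary~\ref{ipbi}), $\fa\pa{i_\alpha}$ is then an isomorphism in $\cov\pa{X}$ from $\fa\pa{p_\alpha}$ to $\fa\pa{\rest{p}\pa{E_\alpha\times\cpa{\alpha}}}$. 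Now $\fa\pa{E}=\bigsqcup_{\alpha}\fa\pa{E_\alpha\times\cpa{\alpha}}$ is canonically homeomorphic over $X$ to the extrinsic disjoint union $\coprod_{\alpha}\fa\pa{E_\alpha\times\cpa{\alpha}}$ (and is a covering map, by Lemma~\ref{homeotocover}), and feeding the componentwise isomorphisms $\fa\pa{E_\alpha\times\cpa{\alpha}}\cong\fa\pa{E_\alpha}$ into Lemma~\ref{isodu} produces a homeomorphism $\coprod_{\alpha}\fa\pa{E_\alpha\times\cpa{\alpha}}\to\coprod_{\alpha}\fa\pa{E_\alpha}$ commuting with the projections to $X$. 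Chaining these identifications gives $\fa\pa{\coprod_{\alpha\in I}E_\alpha}\cong\coprod_{\alpha\in I}\fa\pa{E_\alpha}$ in $\cov\pa{X}$; since the left side is a covering map, Lemma~\ref{isodu} simultaneously yields that $\coprod_{\alpha\in I}\fa\pa{p_\alpha}$ is a covering map, which is the final assertion.

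I do not expect a genuinely hard step here: the substance is already contained in Lemma~\ref{union_covers} and Proposition~\ref{pbi}. The main thing to watch is keeping intrinsic versus extrinsic disjoint unions cleanly separated, and invoking Proposition~\ref{pbi} with its hypothesis of local connectedness and Lemma~\ref{union_covers} with its stronger hypotheses of local path-connectedness and semilocal simple connectivity in the right places.
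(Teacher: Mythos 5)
Your proposal is correct, and it shares with the paper the one essential step: invoking Lemma~\ref{union_covers} (this is where local path-connectedness and semilocal simple connectivity of $Y$ are used) to see that $p=\coprod p_\alpha$ is a covering map, so that $\fa\pa{p}$ is one by Lemma~\ref{pullbackiscover}. Where you diverge is in how the comparison with $\coprod_\alpha\fa\pa{E_\alpha}$ is established. The paper constructs a single map $t:\coprod_\alpha\fa\pa{E_\alpha}\to\fa\pa{E}$ directly, by applying the universal property of the extrinsic disjoint union to the maps $\fa\pa{i_\alpha}$ obtained from Lemma~\ref{pullbacktriangle}, and then verifies by hand that $t$ is a bijection and open (openness coming from openness of each $i_\alpha$ via Lemma~\ref{pullbacktriangle}), finishing with Lemma~\ref{homeotocover}. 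You instead assemble the homeomorphism from previously proved structural facts: Proposition~\ref{pbi} to split $\fa\pa{E}$ intrinsically into the pieces $\fa\pa{E_\alpha\times\cpa{\alpha}}$, Corollary~\ref{ipbi} (functoriality) to identify each piece with $\fa\pa{E_\alpha}$ since the corestricted injection $E_\alpha\to E_\alpha\times\cpa{\alpha}$ is an isomorphism over $Y$, the standard intrinsic-versus-extrinsic identification, and Lemma~\ref{isodu} plus Lemma~\ref{homeotocover} to reassemble and to transfer the covering property to $\coprod_\alpha\fa\pa{p_\alpha}$. Both routes produce the same underlying identification $\pa{\pa{x,e},\alpha}\leftrightarrow\pa{x,\pa{e,\alpha}}$; yours trades the paper's short explicit verification for more bookkeeping with intrinsic/extrinsic unions but requires no new pointwise checks, while the paper's argument is more self-contained and exhibits the isomorphism concretely. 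Your citations are used with their correct hypotheses (local path-connectedness of $Y$ gives the local connectedness needed for Proposition~\ref{pbi}), so I see no gap.
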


\begin{proof}[Proof of Proposition~\ref{pbe}]
Throughout this proof, all disjoint unions are over $\alpha\in I$.
Let $E:=\coprod E_\alpha$ and $p:=\coprod p_\alpha$.
By Lemma~\ref{union_covers}, $p:E\to Y$ is a covering map.
Hence, $\fa\pa{p}:\fa\pa{E}\to X$ is a covering map by Lemma~\ref{pullbackiscover}.\\

For each $\alpha\in I$, $\fa\pa{p_\alpha}:\fa\pa{E_\alpha}\to X$ is a covering map by Lemma~\ref{pullbackiscover},
and the canonical injection $i_\alpha:E_\alpha\to E$ satisfies $p_\alpha=p\circ i_\alpha$.
By Lemma~\ref{pullbacktriangle}, we get the commutative diagram:
\begin{equation}\begin{split}\label{pullback_can_inj}
\xymatrix{
    \fa\pa{E_\alpha}	\ar[rrr]	\ar[rd]^-{\fa\pa{p_\alpha}}	\ar[dd]_{\fa\pa{i_\alpha}}	&	&	&	E_\alpha  \ar[dl]_{p_\alpha}	\ar[dd]^{i_\alpha}\\
    &	X	\ar[r]^{f}																																								&	Y\\
    \fa\pa{E}	\ar[rrr]	\ar[ru]_-{\fa\pa{p}}															&	&	&	E  \ar[ul]^{p}}
\end{split}\end{equation}

Consider the disjoint union $\coprod\fa\pa{E_\alpha}$, which is not yet known to be a cover of $X$.
For each $\alpha\in I$, we have the canonical injection \hbox{$j_\alpha:\fa\pa{E_\alpha}\to \coprod\fa\pa{E_\alpha}$}.
By the universal property of disjoint union, there exists a unique map \hbox{$g:\coprod\fa\pa{E_\alpha}\to X$} such that $\fa\pa{p_\alpha}=g\circ j_\alpha$ for each $\alpha\in I$. Evidently, $g\pa{\pa{x,e},\alpha}=x$.\\

A second application of the universal property of disjoint union yields the unique map $t:\coprod\fa\pa{E_\alpha}\to\fa\pa{E}$ such that 
$\fa\pa{i_\alpha}=t\circ j_\alpha$ for each $\alpha\in I$. Evidently, $t\pa{\pa{x,e},\alpha}=\pa{x,\pa{e,\alpha}}$.\\

We now have the commutative diagram of maps:
\begin{equation}\begin{split}\label{homeotocoverdiag2}
\xymatrix{
    \coprod\fa\pa{E_\alpha}	\ar[rr]^-{t}	\ar[dr]_{g}	&	&	\fa\pa{E}	\ar[dl]^{\fa\pa{p}}\\
    &	X }
\end{split}\end{equation}
where $\fa\pa{p}$ is a covering map.
By Lemma~\ref{homeotocover}, it suffices to show that $t$ is a homeomorphism.
A straightforward exercise shows that $t$ is a bijection.
To show that $t$ is open, it suffices to show that each map $\fa\pa{i_\alpha}:\fa\pa{E_\alpha}\to\fa\pa{E}$ is open.
But, each canonical injection $i_\alpha$ is open, and so $\fa\pa{i_\alpha}$ is open by Lemma~\ref{pullbacktriangle}.
\end{proof}

\begin{proposition}[\textbf{Pullback of partitioned extrinsic disjoint union}]\label{pbpdu}
Let $f:X\to Y$ be a given map where 
 $Y$ is locally path-connected and semilocally simply-connected.
For each $\alpha$ in some index set $I$, let $p_\alpha:E_\alpha\to Y$ be a covering map.
Let $E:=\coprod_{\alpha\in I}E_\alpha$ and let $p$ denote the (covering) map $\coprod_{\alpha\in I}p_\alpha:E\to Y$.
Let $I=\bigsqcup_{\beta\in J}I_\beta$ be an intrinsic disjoint union of sets where $J$ is some index set.
Then, the following is an isomorphism in $\cov\pa{Y}$:
\begin{equation}\label{Ydu}
	E \cong \coprod_{\beta\in J}\coprod_{\alpha\in I_{\beta}}E_\alpha
\end{equation}
and the following are isomorphisms in $\cov\pa{X}$:
\begin{equation}\label{Xdu}
\fa\pa{E} \cong \fa\pa{\coprod_{\beta\in J}\coprod_{\alpha\in I_{\beta}}E_\alpha} \cong
 \coprod_{\beta\in J}\fa\pa{\coprod_{\alpha\in I_{\beta}}E_\alpha} \cong \coprod_{\beta\in J}\coprod_{\alpha\in I_{\beta}} \fa\pa{E_\alpha}.
\end{equation}
\end{proposition}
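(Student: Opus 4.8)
The plan is to reduce everything to results already established for extrinsic disjoint unions: Proposition~\ref{pbe}, Lemma~\ref{union_covers}, Lemma~\ref{isodu}, Lemma~\ref{homeotocover}, and functoriality of $\fa$ (Corollary~\ref{ipbi}). For each $\beta\in J$ abbreviate $E'_\beta:=\coprod_{\alpha\in I_\beta}E_\alpha$ and $p'_\beta:=\coprod_{\alpha\in I_\beta}p_\alpha$. By Lemma~\ref{union_covers} each $p'_\beta:E'_\beta\to Y$ is a covering map, and a second application of Lemma~\ref{union_covers} shows $\coprod_{\beta\in J}p'_\beta:\coprod_{\beta\in J}E'_\beta\to Y$ is a covering map; note $\coprod_{\beta\in J}E'_\beta$ is, by definition of the notation, literally the space $\coprod_{\beta\in J}\coprod_{\alpha\in I_\beta}E_\alpha$ appearing in the statement.

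First I would establish~\eqref{Ydu}. Because $I=\bigsqcup_{\beta\in J}I_\beta$, each $\alpha\in I$ lies in a unique $I_\beta$, and composing the canonical injection $E_\alpha\to E'_\beta$ with the canonical injection $E'_\beta\to\coprod_{\beta\in J}E'_\beta$ gives a map $E_\alpha\to\coprod_{\beta\in J}E'_\beta$ commuting with the maps to $Y$. The universal property of $E=\coprod_{\alpha\in I}E_\alpha$ then produces a map $\Phi:E\to\coprod_{\beta\in J}E'_\beta$ over $Y$. Running the two universal properties in the opposite order produces a map $\Psi:\coprod_{\beta\in J}E'_\beta\to E$ over $Y$, and uniqueness in the universal properties forces $\Psi\circ\Phi=\tn{id}_E$ and $\Phi\circ\Psi=\tn{id}$. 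Hence $\Phi$ is a homeomorphism commuting with the projections to $Y$; since both total spaces are covers of $Y$, $\Phi$ is an isomorphism in $\cov\pa{Y}$, which is~\eqref{Ydu}. (Alternatively, one may write down $\Phi$ explicitly, check it is a homeomorphism over $Y$, and invoke Lemma~\ref{homeotocover}.)

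For the chain~\eqref{Xdu} I would argue one isomorphism at a time. Applying $\fa$ to~\eqref{Ydu} and using Corollary~\ref{ipbi} gives the first isomorphism, $\fa\pa{E}\cong\fa\pa{\coprod_{\beta\in J}\coprod_{\alpha\in I_\beta}E_\alpha}$. Applying Proposition~\ref{pbe} to the family $\cpa{p'_\beta}_{\beta\in J}$ gives the second, $\fa\pa{\coprod_{\beta\in J}E'_\beta}\cong\coprod_{\beta\in J}\fa\pa{E'_\beta}$; Proposition~\ref{pbe} also guarantees that $\coprod_{\beta\in J}\fa\pa{E'_\beta}$ is a genuine covering map of $X$. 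For the third isomorphism, Proposition~\ref{pbe} applied to the subfamily $\cpa{p_\alpha}_{\alpha\in I_\beta}$ yields, for each $\beta\in J$, an isomorphism $\fa\pa{E'_\beta}\cong\coprod_{\alpha\in I_\beta}\fa\pa{E_\alpha}$ in $\cov\pa{X}$ together with the fact that $\coprod_{\alpha\in I_\beta}\fa\pa{E_\alpha}$ is a covering map of $X$. Feeding these isomorphisms, indexed by $\beta\in J$, into Lemma~\ref{isodu} (with $\fa\pa{E'_\beta}$ and $\coprod_{\alpha\in I_\beta}\fa\pa{E_\alpha}$ playing the roles of $E_\beta$ and $E'_\beta$ there) yields the third isomorphism $\coprod_{\beta\in J}\fa\pa{E'_\beta}\cong\coprod_{\beta\in J}\coprod_{\alpha\in I_\beta}\fa\pa{E_\alpha}$ in $\cov\pa{X}$. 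Composing the three isomorphisms completes~\eqref{Xdu}.

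The bookkeeping is routine; the one point requiring care---and the only place the local hypotheses are used---is the repeated observation that disjoint unions such as $\coprod_{\alpha\in I_\beta}\fa\pa{E_\alpha}$ and $\coprod_{\beta\in J}\fa\pa{E'_\beta}$ are honest covering maps of $X$. This is not available from Lemma~\ref{union_covers} directly, since $X$ carries no local hypotheses; instead it follows from Proposition~\ref{pbe}, which identifies these disjoint unions with pullbacks of covers of $Y$, hence covers of $X$ by Lemma~\ref{pullbackiscover}.
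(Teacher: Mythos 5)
Your proposal is correct and follows essentially the same route as the paper: Lemma~\ref{union_covers} to make the iterated disjoint unions into covers of $Y$, a universal-property/explicit-map argument for~\eqref{Ydu}, and then Corollary~\ref{ipbi}, Proposition~\ref{pbe}, and Lemma~\ref{isodu} for the three isomorphisms in~\eqref{Xdu}, including the key observation that $\coprod_{\beta\in J}\fa\pa{E'_\beta}$ is already known to be a cover of $X$ so that Lemma~\ref{isodu} applies. The only cosmetic difference is that the paper writes down the homeomorphism in~\eqref{Ydu} explicitly and leaves the check to the reader, whereas you obtain mutually inverse maps from the two universal properties.
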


\begin{proof}[Proof of Proposition~\ref{pbpdu}]
The map $p:E\to Y$ is a covering map by Lemma~\ref{union_covers}.
For each $\beta\in J$, the map $\coprod_{\alpha\in I_\beta}p_\alpha:\coprod_{\alpha\in I_{\beta}}E_\alpha\to Y$, denoted $\pi_\beta$, is a covering map by Lemma~\ref{union_covers}.
Hence, the map $\coprod_{\beta\in J}\pi_\beta:\coprod_{\beta\in J}\coprod_{\alpha\in I_{\beta}}E_\alpha \to Y$, denoted $\pi$, is a covering map by Lemma~\ref{union_covers}.
Evidently, $\pi\pa{\pa{e,\alpha},\beta}=p_\alpha\pa{e}$.
By Lemma~\ref{pullbackiscover}, each of the maps $\fa\pa{p}$, $\fa\pa{\pi_\beta}$ where $\beta\in J$, and $\fa\pa{\pi}$ is a covering map with target $X$.\\

The canonical injections $i_\alpha:E_\alpha \to E$, $\alpha \in I$, yield the maps $j_\beta:\coprod_{\alpha\in I_{\beta}}E_\alpha \to E$ for $\beta\in J$ by the universal property of disjoint union. In turn, the $j_\beta$ yield the map $t:\coprod_{\beta\in J}\coprod_{\alpha\in I_{\beta}}E_\alpha \to E$.
Evidently, $t\pa{\pa{e,\alpha},\beta}=\pa{e,\alpha}$. We leave the reader the easy verification that $t$ is a homeomorphism and $\pi=p\circ t$. This proves~\eqref{Ydu}.\\

The first isomorphism in~\eqref{Xdu} follows from~\eqref{Ydu} and Corollary~\ref{ipbi}.
The second isomorphism in~\eqref{Xdu} follows from Proposition~\ref{pbe} applied to the covering maps $\pi_\beta$, $\beta\in J$.
In particular, the map $h:=\coprod_{\beta\in J}\fa\pa{\pi_\beta}$ is a covering map with target $X$.
For each $\beta\in J$, the maps $p_\alpha$ for $\alpha\in I_\beta$ are covering maps.
So, Proposition~\ref{pbe} implies that $\coprod_{\alpha\in I_\beta}\fa\pa{p_\alpha}$ is a covering map
and $\coprod_{\alpha\in I_\beta}\fa\pa{E_\alpha}\cong\fa\pa{\coprod_{\alpha\in I_\beta}E_\alpha}$.
Thus, Lemma~\ref{isodu} implies the third isomorphism in~\eqref{Xdu} since $h$ is already known to be a covering map.
\end{proof}

\begin{remark}
Note that in Propositions~\ref{pbe} and~\ref{pbpdu}, no local niceness hypotheses on $X$ were necessary.
\end{remark}

\section{Generalizing Quillen's triad}\label{s:gqt}

\subsection{Generalizing Quillen's~\ref{q1}}\label{ss:gq1}

If $X$ is a topological space, then $\Gamma\pa{X}$ denotes the set of components of $X$.
By definition, each component of $X$ is nonempty, although $\Gamma\pa{X}$ itself is nonempty if and only if $X$ is nonempty.
A map $f:X\to Y$ induces the function $\fs:\Gamma\pa{X}\to\Gamma\pa{Y}$ given by $\br{x}\mapsto\br{f\pa{x}}$.
The following is our generalization of~\ref{q1}.
Recall that $\fa$ is one of the four functors in Lemma~\ref{pullbackfunctor}.

\begin{proposition}\label{gq1}
Fix a category of coverings.
Let $f:X\to Y$ be a map where $Y$ is locally connected.
Then, $\fs:\Gamma\pa{X}\to\Gamma\pa{Y}$ is surjective if and only if $\fa$ is faithful.
\end{proposition}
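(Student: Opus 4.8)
The plan is to reduce both directions to the identity $\im{\wt{f}}=p^{-1}\pa{\im f}$ of Lemma~\ref{inclusion} together with the fact (Corollary~\ref{equalizer_morphisms}) that the equalizer of two morphisms of covers is open and closed. The first step is to translate faithfulness into a restriction statement: if $p:E\to Y$ is a covering map and $t$ is a morphism out of it, then $\fa\pa{t}=\rest{\pa{\tn{id}_{X}\times t}}{\fa\pa{E}}$ by Lemma~\ref{pullbacktriangle}, so $\fa\pa{t}\pa{x,e}=\pa{x,t\pa{e}}$ for $\pa{x,e}\in\fa\pa{E}$. Hence for morphisms $t_1,t_2:p_1\to p_2$ one has $\fa\pa{t_1}=\fa\pa{t_2}$ if and only if $t_1$ and $t_2$ agree on $\im{\wt{f}}=p_1^{-1}\pa{\im f}$; that is, $\fa$ is faithful exactly when every morphism of covers of $Y$ is determined by its restriction over $\im f$. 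I will also use the elementary observation that $\fs:\Gamma\pa{X}\to\Gamma\pa{Y}$ is surjective if and only if $\im f$ meets every component of $Y$.

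For the forward implication, assume $\fs$ is surjective and let $t_1,t_2:p_1\to p_2$ satisfy $\fa\pa{t_1}=\fa\pa{t_2}$. By the previous paragraph their equalizer $S\subseteq E_1$ contains $p_1^{-1}\pa{\im f}$, and by Corollary~\ref{equalizer_morphisms} it is open and closed in $E_1$. Since covering maps are local homeomorphisms and $Y$ is locally connected, $E_1$ is locally connected, so $S$ is a union of components of $E_1$; if $S\neq E_1$, choose a component $C\subseteq E_1\setminus S$. By Lemma~\ref{imageclosed}, $p_1\pa{C}$ is a component of $Y$, so it meets $\im f$, whence some $e\in C$ has $p_1\pa{e}\in\im f$, i.e.\ $e\in p_1^{-1}\pa{\im f}\subseteq S$, contradicting $e\in E_1\setminus S$. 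Therefore $S=E_1$, so $t_1=t_2$ and $\fa$ is faithful.

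For the converse, assume $\fs$ is not surjective and fix a component $B$ of $Y$ with $B\cap\im f=\emptyset$; since $Y$ is locally connected, $B$ is open and closed. Let $p_1=p_2$ be the trivial double cover $\pr{1}:Y\times\cpa{1,2}\to Y$, which is an object of all four categories (based at $\pa{y_0,1}$ when relevant; note $y_0=f\pa{x_0}\in\im f$, so $y_0\notin B$). Let $t_1$ be the identity, and let $t_2$ be the map swapping the two sheets over $B$ and equal to the identity over $Y\setminus B$; because $B$ is clopen, $t_2$ is continuous, it is a homeomorphism over $Y$ (hence a surjective and, where relevant, basepoint-preserving morphism), and $t_1\neq t_2$ since $B\neq\emptyset$. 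As $t_1$ and $t_2$ agree on $\pr{1}^{-1}\pa{Y\setminus B}\supseteq\pr{1}^{-1}\pa{\im f}=p_1^{-1}\pa{\im f}$, the first paragraph gives $\fa\pa{t_1}=\fa\pa{t_2}$, so $\fa$ is not faithful. Note that this single pair $\pa{t_1,t_2}$ handles $\cov$, $\scov$, $\bcov$, and $\bscov$ simultaneously.

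The one delicate point is the forward direction's double appeal to local connectedness of $Y$: it is used both to conclude that $E_1$ is locally connected (so that the nonempty clopen set $E_1\setminus S$ contains an entire component of $E_1$) and, via Lemma~\ref{imageclosed}, that the image of a component of $E_1$ is a component of $Y$. Without it the equalizer argument collapses; this is exactly the hypothesis flagged in the statement. Everything else---the pullback-of-a-morphism formula and the verification that the constructed $t_2$ is a legitimate morphism in each of the four categories---is routine.
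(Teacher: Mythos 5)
Your proof is correct and follows essentially the same route as the paper: the backward direction uses the identical sheet-swap on a trivial double cover over the missed component, and the forward direction combines the clopen equalizer (Corollary~\ref{equalizer_morphisms}) with Lemma~\ref{imageclosed} and surjectivity of $\fs$ to force agreement on every component of $E_1$. One tiny quibble with your closing remark: local connectedness of $E_1$ is not actually needed to conclude that the clopen set $E_1\setminus S$ is a union of components (any clopen set is), so in the forward direction the hypothesis on $Y$ enters only through Lemma~\ref{imageclosed}, exactly as in the paper.
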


\begin{proof}[Proof of Proposition~\ref{gq1}]
The result is vacuously true if $Y$ is empty, so assume $Y\neq\emptyset$.
We begin with the backward implication.
Suppose, by way of contradiction, that $C$ is a component of $Y$ disjoint from $\im{f}$.
Consider the trivial object $\pr{1}:Y\times\cpa{1,2}\to Y$.
For the the based categories, base $Y\times\cpa{1,2}$ at $\pa{y_0,1}$ and note that $y_0=f\pa{x_0}\notin C$.
Let $t_1$ be the identity morphism $\pr{1}\to \pr{1}$.
Let $t_2$ be the function $Y\times\cpa{1,2}\to Y\times\cpa{1,2}$ which swaps the components $C\times\cpa{1}$ and $C\times\cpa{2}$ and is the identity otherwise.
As $Y$ is locally connected, we see that $t_2$ is an isomorphism in $\h{\pr{1},\pr{1}}$.
As $\im{f}\cap C$ is empty, it is straightforward to check that $\fa\pa{t_1}=\fa\pa{t_2}$.
As $t_1\neq t_2$, $\fa$ is not faithful.\\

For the forward implication, let $p_1:E_1\to Y$ and $p_2:E_2\to Y$ be objects.
Let $t_1,t_2\in\h{p_1,p_2}$ such that $\fa\pa{t_1}=\fa\pa{t_2}$. We must show $t_1=t_2$.
By Corollary~\ref{equalizer_morphisms}, the equalizer of $t_1$ and $t_2$ is open and closed in $E_1$.
Let $C$ be a component of $E_1$. It suffices to prove that $t_1$ and $t_2$ agree at some point of $C$.
By Lemma~\ref{imageclosed}, $p_1\pa{C}$ is a component of $Y$.
By hypothesis, there is a point $y\in\im{f}\cap p_1\pa{C}$, say $y=f\pa{x}=p_1\pa{e}$ where $x\in X$ and $e\in C$.
Then, $\pa{x,e}\in\fa\pa{E_1}$ and:
\[
	\pa{x,t_1\pa{e}}=\fa\pa{t_1}\pa{x,e}=\fa\pa{t_2}\pa{x,e}=\pa{x,t_2\pa{e}}.
\]
So, $t_1\pa{e}=t_2\pa{e}$ and the proof is complete.
\end{proof}

The previous proof utilized local connectivity of $Y$ in both implications.
The following example shows that the backward implication is false in general without this hypothesis.
It is not clear to us if the forward implication holds without this hypothesis.

\begin{example}
Let $X:=\cpa{1/n \mid n\in\N}\subset\R$, let $Y:=\cpa{0}\cup X \subset \R$, and let \hbox{$f:X\to Y$} be inclusion.
In the based categories, base $X$ and $Y$ at $1$.
Let \hbox{$p_1:E_1\to Y$} and $p_2:E_2\to Y$ be objects.
Let $t_1,t_2\in\h{p_1,p_2}$ such that $\fa\pa{t_1}=\fa\pa{t_2}$.
Let $h$ denote the map $\wt{f}:\fa\pa{E_1}\to E_1$.
As \hbox{$\im{h}=p_1^{-1}\pa{X}$}, we get that $t_1=t_2$ on $p_1^{-1}\pa{X}$.
Suppose $e\in p_1^{-1}\pa{0}$. Using a local trivialization of $p_1$ at $0\in Y$, ones sees that $e$ is a limit point of $p_1^{-1}\pa{X}$.
The equalizer of $t_1$ and $t_2$ is closed in $E_1$ by Corollary~\ref{equalizer_morphisms}, and so $t_1\pa{e}=t_2\pa{e}$.
Hence, $t_1=t_2$ on all of $E_1$ and $\fa$ is faithful, even though $\im{f}$ misses the component $\cpa{0}$ of $Y$.
\end{example}

Proposition~\ref{gq1} yields the following alternative generalization of~\ref{q1}.

\begin{corollary}\label{gq1_pc}
Fix a category of coverings.
Let $f:X\to Y$ be a map where $Y$ is locally path-connected.
Then, $\fs:\pi_0\pa{X}\to\pi_0\pa{Y}$ is surjective if and only $\fa$ is faithful.
\end{corollary}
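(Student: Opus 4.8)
The plan is to deduce Corollary~\ref{gq1_pc} directly from Proposition~\ref{gq1}. The first observation is that local path-connectivity is strictly stronger than local connectivity: if every point of $Y$ has a neighborhood basis of path-connected open sets, then in particular it has a neighborhood basis of connected open sets, so $Y$ is locally connected and Proposition~\ref{gq1} applies verbatim. Thus $\fa$ is faithful if and only if $\fs:\Gamma\pa{Y}\to\Gamma\pa{Y}$, pardon, $\fs:\Gamma\pa{X}\to\Gamma\pa{Y}$ is surjective.

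The second observation is that for a locally path-connected space, path components and connected components coincide. Indeed, in a locally path-connected space each path component is open (any point has a path-connected neighborhood, which lies entirely in that path component), hence also closed (being the complement of the union of the other path components), so each path component is a nonempty clopen connected set and therefore equals a connected component; conversely each connected component, being a union of path components, equals a single one of them. Applying this to both $X$ and $Y$ — note $X$ is locally path-connected as well, since it admits a map to the locally path-connected... no: one must be careful here. Local path-connectivity of $X$ is not automatic. However, what is actually needed is only that $\pi_0(X)$ surjects onto $\pi_0(Y)$ matches the clause about $\Gamma$. For $Y$ locally path-connected we have $\pi_0\pa{Y}=\Gamma\pa{Y}$ canonically. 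For $X$, regardless of whether $X$ is locally path-connected, there is always a canonical surjection $\pi_0\pa{X}\twoheadrightarrow\Gamma\pa{X}$ sending a path component to the connected component containing it, and this fits into a commuting square with the two induced maps $\fs$ on $\pi_0$ and on $\Gamma$ together with the identification $\pi_0\pa{Y}\xrightarrow{\sim}\Gamma\pa{Y}$. A quick diagram chase then shows $\fs:\pi_0\pa{X}\to\pi_0\pa{Y}$ is surjective if and only if $\fs:\Gamma\pa{X}\to\Gamma\pa{Y}$ is surjective: the "if" direction uses that $\pi_0\pa{X}\to\Gamma\pa{X}$ is onto, and the "only if" direction uses that the square commutes and $\pi_0\pa{Y}\to\Gamma\pa{Y}$ is a bijection.

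Putting these together: $\fa$ faithful $\iff$ $\fs:\Gamma\pa{X}\to\Gamma\pa{Y}$ surjective (Proposition~\ref{gq1}) $\iff$ $\fs:\pi_0\pa{X}\to\pi_0\pa{Y}$ surjective (the diagram chase above). I expect the proof to be short — essentially a one-paragraph reduction. The only point requiring a modicum of care, and thus the "main obstacle," is not assuming $X$ is locally path-connected: one should phrase the equivalence of the two surjectivity statements purely in terms of the canonical surjection $\pi_0\pa{X}\to\Gamma\pa{X}$ and the bijection $\pi_0\pa{Y}\cong\Gamma\pa{Y}$, rather than claiming $\pi_0\pa{X}=\Gamma\pa{X}$. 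If the paper's conventions make $X$ locally path-connected implicitly (it does not seem to here), the argument simplifies further, but the robust version costs only one extra sentence.
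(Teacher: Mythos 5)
Your proposal is correct and is essentially the paper's own argument: the paper offers no separate proof, simply stating that Proposition~\ref{gq1} yields Corollary~\ref{gq1_pc}, and your reduction---locally path-connected implies locally connected, $\pi_0\pa{Y}\cong\Gamma\pa{Y}$, and the use of the canonical surjection $\pi_0\pa{X}\to\Gamma\pa{X}$ (since $X$ need not be locally path-connected) in a commuting square---is exactly the intended deduction.
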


\subsection{Pullback and the Fundamental Group}\label{ss:pb_fg}

Our generalizations of~\ref{q2} and \ref{q3} further utilize the fundamental group.
This subsection collects some basic facts relating pullback and the fundamental group.
The following lemma determines the fundamental group of a based component of the pullback of a covering map.

\begin{lemma}\label{key}
Consider a based pullback diagram of maps:
\begin{equation}\label{based_pullback}\begin{split}
\xymatrix{
    \pa{\fa\pa{E},\pa{x_0,e_0}}	\ar[r]^-{\wt{f}}	\ar[d]_{\fa(p)}	&	\bs{E}	\ar[d]^{p}\\
    \bs{X}  							\ar[r]^-{f}     																& \bs{Y} }
\end{split}\end{equation}
where $p$ is a covering map and $E$ is not necessarily connected.
Let $z_1:=\pa{x_0,e_1}$ be an arbitrary point in the fiber $\fa\pa{p}^{-1}\pa{x_0}$, and let $Z$ be the component of $\fa\pa{E}$ containing $z_1$.
Then, the induced homomorphisms of fundamental groups satisfy:
\begin{equation}\label{mcc}
	\fa\pa{p}_{\sharp}\pa{\pi_{1}\pa{Z,z_{1}}} = \fs^{-1}\pa{p_{\sharp}\pa{\pi_{1}\pa{E,e_{1}}}}.
\end{equation}
\end{lemma}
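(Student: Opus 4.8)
The plan is to compare the two loop classes through the bottom of the pullback square. Recall from~\eqref{pullback_set} that $\fa\pa{E}\subset X\times E$ consists of pairs $\pa{x,e}$ with $f\pa{x}=p\pa{e}$, that $\fa\pa{p}$ is the restriction of $\pr{1}$, and that $\wt{f}$ is the restriction of $\pr{2}$; moreover $\fa\pa{p}$ is a covering map by Lemma~\ref{pullbackiscover}. The key observation is that a loop in $X$ based at $x_0$ lifts to a path in $\fa\pa{E}$ starting at $z_1=\pa{x_0,e_1}$ if and only if its $f$-image lifts to a path in $E$ starting at $e_1$, and two such lifts end at $z_1$ and $e_1$ respectively exactly when one does. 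This is because the universal property of pullback~\eqref{uppb} identifies paths $\gamma:I\to\fa\pa{E}$ with pairs $\pa{\fa\pa{p}\circ\gamma,\ \wt{f}\circ\gamma}$ of compatible paths in $X$ and $E$, and the compatibility $f\circ(\fa\pa{p}\circ\gamma)=p\circ(\wt{f}\circ\gamma)$ is automatic.

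First I would prove the inclusion $\fa\pa{p}_{\sharp}\pa{\pi_1\pa{Z,z_1}}\subset \fs^{-1}\pa{p_{\sharp}\pa{\pi_1\pa{E,e_1}}}$. Take a loop $\gamma$ in $Z$ at $z_1$; then $\fa\pa{p}\circ\gamma$ is a loop in $X$ at $x_0$, and applying $f$ gives $f\circ\fa\pa{p}\circ\gamma=p\circ\wt{f}\circ\gamma$, where $\wt{f}\circ\gamma$ is a loop in $E$ at $e_1$ (it is a loop because $\gamma$ is and $\wt{f}$ is continuous with $\wt{f}\pa{z_1}=e_1$). Hence $\fs\pa{\br{\fa\pa{p}\circ\gamma}}=\br{f\circ\fa\pa{p}\circ\gamma}=\br{p\circ\wt{f}\circ\gamma}=p_{\sharp}\pa{\br{\wt{f}\circ\gamma}}\in p_{\sharp}\pa{\pi_1\pa{E,e_1}}$, which is the required membership.

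Next I would prove the reverse inclusion. Suppose $\alpha$ is a loop in $X$ at $x_0$ with $\fs\pa{\br{\alpha}}\in p_{\sharp}\pa{\pi_1\pa{E,e_1}}$; I must show $\br{\alpha}$ is realized by a loop in $Z$. Since $\fs\pa{\br{\alpha}}=\br{f\circ\alpha}$ lies in the image of $p_{\sharp}$ at $e_1$, the lifting criterion (Hatcher, Prop.~1.33, applied to the covering $p$ and the map $f\circ\alpha:I\to Y$, together with the fact that $\pi_1(I)$ is trivial so the criterion reduces to the endpoint/monodromy condition; concretely, $f\circ\alpha$ is homotopic rel endpoints to a loop in the image of $p_{\sharp}$, and using homotopy lifting~\cite[Prop.~1.30]{hatcher}) shows that $f\circ\alpha$ lifts to a \emph{loop} $\beta$ in $E$ at $e_1$. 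Then $\pa{\alpha,\beta}$ is a compatible pair, so by the universal property~\eqref{uppb} it defines a loop $\widehat{\gamma}$ in $\fa\pa{E}$ at $z_1$ with $\fa\pa{p}\circ\widehat{\gamma}=\alpha$. Since $\widehat{\gamma}$ is a loop at $z_1$, its image is path-connected and contains $z_1$, hence lies in $Z$; therefore $\br{\alpha}=\fa\pa{p}_{\sharp}\pa{\br{\widehat{\gamma}}}\in\fa\pa{p}_{\sharp}\pa{\pi_1\pa{Z,z_1}}$.

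The main obstacle is the careful handling of the lifting step in the reverse inclusion: one needs that $\br{f\circ\alpha}\in p_{\sharp}\pa{\pi_1\pa{E,e_1}}$ guarantees a lift of $f\circ\alpha$ that is an honest loop (closing up at $e_1$), not merely a path ending somewhere in the fiber. This is exactly the content of the path-lifting portion of the covering-space lifting criterion applied over the interval, and it is where the hypothesis that $p$ is a covering map is used; everything else is the bookkeeping of the universal property of pullback, which is routine given the setup in Section~\ref{ss:pullback}.
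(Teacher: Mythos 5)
Your proposal is correct and follows essentially the same route as the paper: the forward inclusion from commutativity of the pullback square, and the reverse inclusion by lifting $f\circ\alpha$ to $E$ at $e_1$, using homotopy lifting to see the lift closes up into a loop, and then invoking the universal property of pullback to produce a loop in $Z$ at $z_1$ projecting to $\alpha$.
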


\begin{proof}
Commutativity of~\eqref{based_pullback} yields ``$\subset$'' in~\eqref{mcc}.
Next, let $\alpha$ be a loop in $X$ based at $x_0$ and satisfying:
\begin{equation}\label{alpha_cond}
	\fs\pa{\br{\alpha}}	=	\br{f\circ\alpha}	\in	p_{\sharp}\pa{\pi_{1}\pa{E,e_{1}}}.
\end{equation}
By path lifting~\cite[Prop.~1.30]{hatcher}, there is a unique lift $\wt{\alpha}:\pa{[0,1],0}\to\pa{E,e_1}$ such that $p\circ\wt{\alpha}=f\circ\alpha$.
By~\eqref{alpha_cond} and homotopy lifting~\cite[Prop.~1.31]{hatcher}, $\wt{\alpha}$ is a loop based at $e_1$.
The universal property of pullback yields the unique map $\mu:[0,1]\to\fa\pa{E}$ such that the induced diagram commutes.
Recall that $\mu(t)=\pa{\alpha(t),\wt{\alpha}(t)}$.
So, $\mu$ is a loop based at $z_1$ and $\im{\mu}$ lies in $Z$.
Thus:
\[
	\fa(p)_{\sharp}\pa{\br{\mu}} = \br{\fa(p)\circ\mu} = \br{\alpha}
\]
and the proof is complete.
\end{proof}

\begin{lemma}\label{comp_bp}
Let $f:X\to Y$ be a map where $X$ and $Y$ are locally path-connected.
Let $p:E\to Y$ be a covering map such that $p\pa{E}$ is connected.
Assume that $f\pa{x_1}=p\pa{e_1}$ for some $x_1\in X$ and $e_1\in E$, and define $y_1:=f\pa{x_1}$.
Assume \hbox{$\fs:\pi_0\pa{X}\to\pi_0\pa{Y}$} is injective.
If $Z$ is a component of $\fa\pa{E}$,
then $Z$ contains a point in the fiber $\cpa{x_1}\times p^{-1}\pa{y_1}$ and $\fa\pa{p}\pa{Z}$ equals the component of $X$ containing $x_1$.
\end{lemma}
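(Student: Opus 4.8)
The plan is to analyze a component $Z$ of $\fa\pa{E}$ by looking at where $\fa\pa{p}$ sends it. First I would invoke Corollary~\ref{decompose_cover_componentwise} (via Lemma~\ref{pullbackiscover} and Lemma~\ref{imageclosed}, noting $X$ is locally path-connected hence locally connected): since $\fa\pa{p}:\fa\pa{E}\to X$ is a covering map and $Z$ is a component of $\fa\pa{E}$, the image $\fa\pa{p}\pa{Z}$ is a component of $X$, call it $A$. The goal is then twofold: show $A$ is the component of $X$ containing $x_1$, and show $Z$ meets the fiber $\cpa{x_1}\times p^{-1}\pa{y_1}$.

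The key step is to track components across the pullback square using the injectivity of $\fs$. Pick any point $z=\pa{x,e}\in Z$, so $x\in A$ and $e\in E$ with $f\pa{x}=p\pa{e}$. By property~\ref{vert_fiber}, the fiber of $\fa\pa{p}$ over $x$ is $\cpa{x}\times p^{-1}\pa{f\pa{x}}$, so it suffices to produce a point of $Z$ lying over $x_1$; once we know $Z$ meets $\cpa{x_1}\times p^{-1}\pa{y_1}$, we automatically get $x_1\in\fa\pa{p}\pa{Z}=A$, so $A$ is the component of $X$ containing $x_1$, giving both conclusions at once. Now $\fs\br{x}=\br{f\pa{x}}=\br{p\pa{e}}$, which is the component of $Y$ containing $p\pa{E}$ (a connected set by hypothesis); likewise $\fs\br{x_1}=\br{f\pa{x_1}}=\br{y_1}$ lies in that same component since $y_1=p\pa{e_1}\in p\pa{E}$. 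Hence $\fs\br{x}=\fs\br{x_1}$, and injectivity of $\fs$ forces $\br{x}=\br{x_1}$, i.e.\ $A$ is the component of $X$ containing $x_1$.

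It remains to find a point of $Z$ over $x_1$. Here I would use that $A$ is path-connected (a component of the locally path-connected space $X$): choose a path $\gamma$ in $A$ from $x$ to $x_1$. Since $\fa\pa{p}$ is a covering map, lift $\gamma$ starting at $z=\pa{x,e}\in Z$ to a path $\wt{\gamma}$ in $\fa\pa{E}$; its endpoint $\wt{\gamma}\pa{1}$ lies over $x_1$ and, being connected to $z$ by a path, lies in the component $Z$. By property~\ref{vert_fiber} again, $\wt{\gamma}\pa{1}\in\cpa{x_1}\times p^{-1}\pa{f\pa{x_1}}=\cpa{x_1}\times p^{-1}\pa{y_1}$, which is exactly what we wanted; and of course this also re-confirms $x_1\in\fa\pa{p}\pa{Z}$.

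The main obstacle I anticipate is making sure the connectedness bookkeeping is airtight: we need $\fa\pa{E}$ (hence each $Z$) to be honestly a covering space of $X$ with $\fa\pa{p}\pa{Z}$ a genuine component of $X$, which requires local connectivity of $X$ and the image-closed lemma; and we must be careful that $Z$ is nonempty to begin with, so that the point $z$ exists — but components are nonempty by definition, so this is fine, and every step after that is routine path-lifting. A minor subtlety is that we never need injectivity of $\fs$ to be surjectivity; it is used solely to collapse $\br{x}$ and $\br{x_1}$, so the statement's hypotheses are exactly what is consumed.
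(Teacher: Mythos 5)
Your proposal is correct and follows essentially the same route as the paper: use injectivity of $\fs$ (together with connectedness of $p\pa{E}$ and local path-connectedness) to place $x$ and $x_1$ in the same path component of $X$, lift a path from $x$ to $x_1$ through the covering map $\fa\pa{p}$ starting at a point of $Z$ to hit the fiber $\cpa{x_1}\times p^{-1}\pa{y_1}$, and invoke Lemma~\ref{imageclosed} to identify $\fa\pa{p}\pa{Z}$ as the component of $X$ containing $x_1$. The only cosmetic difference is that you compare $f\pa{x}$ and $f\pa{x_1}$ directly inside the connected set $p\pa{E}$, whereas the paper first notes $p\pa{E}$ is itself a path component of $Y$; both are fine.
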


\begin{proof}[Proof of Lemma~\ref{comp_bp}]
Let $Z$ be a component of $\fa\pa{E}$, and let $(x,e)\in Z$.
As $Y$ is locally path-connected, $p\pa{E}$ is a path component of $Y$.
Thus, $f(x)$ and $f(x_1)$ lie in the same path component of $Y$.
By hypothesis, $x$ and $x_1$ lie in the same path component of $X$.
Lift a path from $x$ to $x_1$ in $X$ to $\fa\pa{E}$, beginning at $(x,e)$, and the first conclusion follows.
The second conclusion now follows by Lemma~\ref{imageclosed} since $\fa\pa{p}$ is a covering map.
\end{proof}

\begin{lemma}\label{pb_connected}
Let $f:X\to Y$ be a map where $X$ and $Y$ are locally path-connected.
Let $p:E\to Y$ be a covering map where $E$ is connected.
Assume $\fs:\pi_0\pa{X}\to\pi_0\pa{Y}$ is injective and $\fs:\pi_1\pa{X,x}\to\pi_1\pa{Y,f(x)}$ is surjective for each $x\in X$.
Then, $\fa\pa{E}$ is connected.
\end{lemma}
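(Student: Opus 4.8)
The plan is to show that any two points of $\fa\pa{E}$ can be joined by a path. Since $E$ is connected and $p\pa{E}$ is a component of $Y$ (by Lemma~\ref{imageclosed}, using local connectivity of $Y$), the hypothesis that $\fs:\pi_0\pa{X}\to\pi_0\pa{Y}$ is injective means there is a single component $X_0$ of $X$ with $f\pa{X_0}\subset p\pa{E}$; equivalently $\fa\pa{E}$ lies over $X_0$. By Lemma~\ref{comp_bp}, every component $Z$ of $\fa\pa{E}$ meets a fixed vertical fiber $\cpa{x_1}\times p^{-1}\pa{y_1}$ and satisfies $\fa\pa{p}\pa{Z}=X_0$. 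So it suffices to show that any two points $z_1=\pa{x_1,e_1}$ and $z_1'=\pa{x_1,e_1'}$ in the fiber $\fa\pa{p}^{-1}\pa{x_1}=\cpa{x_1}\times p^{-1}\pa{y_1}$ lie in the same component of $\fa\pa{E}$.

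First I would fix such $z_1$ and $z_1'$ and let $Z$ be the component of $\fa\pa{E}$ containing $z_1$. By Lemma~\ref{key},
\[
	\fa\pa{p}_{\sharp}\pa{\pi_1\pa{Z,z_1}}=\fs^{-1}\pa{p_{\sharp}\pa{\pi_1\pa{E,e_1}}}.
\]
Now I would invoke the surjectivity hypothesis on $\fs:\pi_1\pa{X,x_1}\to\pi_1\pa{Y,y_1}$: this forces $\fs^{-1}\pa{p_{\sharp}\pa{\pi_1\pa{E,e_1}}}$ to be all of $\pi_1\pa{X,x_1}$, because the preimage of a subgroup under a surjection that hits everything is everything. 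Hence $\fa\pa{p}_{\sharp}:\pi_1\pa{Z,z_1}\to\pi_1\pa{X,x_1}$ is surjective. The point of this is the standard covering-space fact: if $q:\wt{B}\to B$ is a covering map with $\wt{B}$ connected, $B$ path-connected and locally path-connected, then the different points of a fiber $q^{-1}\pa{b}$ are indexed by the cosets of the subgroup $q_{\sharp}\pa{\pi_1\pa{\wt{B},\wt{b}}}$ in $\pi_1\pa{B,b}$. Applying this with $q=\rest{\fa\pa{p}}Z:Z\to X_0$ (a covering map onto the path-connected, locally path-connected space $X_0$ by Lemma~\ref{comp_bp} and Lemma~\ref{restcov}/Lemma~\ref{comps_cover}) and $\wt{b}=z_1$: surjectivity of $q_{\sharp}$ means there is exactly one such coset, so $q^{-1}\pa{x_1}\cap Z$ is a single point. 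But Lemma~\ref{comp_bp} says $Z$ meets $\cpa{x_1}\times p^{-1}\pa{y_1}=\fa\pa{p}^{-1}\pa{x_1}$; combining, $Z$ contains the \emph{entire} fiber $\fa\pa{p}^{-1}\pa{x_1}$? That is too strong — rather, the coset-counting shows $Z\cap\fa\pa{p}^{-1}\pa{x_1}$ has exactly one point for \emph{each} component $Z$, and since the fiber is partitioned among the components, and each component meets it (Lemma~\ref{comp_bp}), if the fiber has $n$ points there are $n$ components. I need instead to argue directly that $z_1'\in Z$.

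So the cleaner route, which I would actually take: to connect $z_1$ to $z_1'$ within $\fa\pa{E}$, lift a loop. The two points $e_1,e_1'\in p^{-1}\pa{y_1}$ lie in the connected cover $E$, so there is a path $\gamma$ in $E$ from $e_1$ to $e_1'$; then $p\circ\gamma$ is a loop in $Y$ based at $y_1$, representing a class in $p_{\sharp}$'s ambient group — more precisely, $\br{p\circ\gamma}$ is an arbitrary element of $\pi_1\pa{Y,y_1}$ realized this way, and as $\gamma$ ranges over all such paths these classes range over a coset of $p_{\sharp}\pa{\pi_1\pa{E,e_1}}$. By surjectivity of $\fs:\pi_1\pa{X,x_1}\to\pi_1\pa{Y,y_1}$, pick a loop $\alpha$ in $X$ based at $x_1$ with $\fs\pa{\br{\alpha}}=\br{p\circ\gamma}$ (after possibly adjusting $\gamma$ by an element of $p_{\sharp}\pa{\pi_1\pa{E,e_1}}$, using that $f\circ(\text{loop based at }x_1)$ lands in that subgroup when its class does — exactly condition~\eqref{alpha_cond} from the proof of Lemma~\ref{key}). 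Then $\fa\pa{p}\circ\alpha$'s lift: by the universal property of pullback, the pair $\pa{\alpha,\wt{\alpha}}$ where $\wt{\alpha}$ is the lift of $f\circ\alpha$ starting at $e_1$ gives a path $\mu$ in $\fa\pa{E}$ from $z_1=\pa{x_1,e_1}$ to $\pa{x_1,\wt{\alpha}\pa{1}}$; arranging $\wt{\alpha}\pa{1}=e_1'$ by the coset matching shows $z_1'$ is in the path component — hence the component — of $z_1$. Since $x_1$ and the fiber point were arbitrary, and every component meets this fiber, $\fa\pa{E}$ is connected. The main obstacle is the bookkeeping in this last paragraph: matching the endpoint of the lift $\wt{\alpha}$ to the prescribed $e_1'$ requires combining the freedom in choosing $\gamma$ (a coset of $p_{\sharp}\pa{\pi_1\pa{E,e_1}}$) with surjectivity of $\fs$ on $\pi_1$, and one must check that the resulting loop $\alpha$ indeed satisfies the lifting-criterion~\eqref{alpha_cond} so that $\wt{\alpha}$ is actually a loop's worth of lift landing at the right fiber point; this is where local path-connectivity of $X$ (to lift paths and to know components equal path components) and of $Y$ (for Lemma~\ref{imageclosed}) get used.
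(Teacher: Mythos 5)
Your ``cleaner route'' is in fact the paper's proof (reduce to a single fiber via Lemma~\ref{comp_bp}, connect two fiber points by a path $\gamma$ in $E$, use surjectivity of $\fs$ on $\pi_1$ to produce a loop $\alpha$ in $X$ with $\fs\pa{\br{\alpha}}=\br{p\circ\gamma}$, lift, compare endpoints), but as written it stops exactly at the step that needs an argument, and the machinery you propose to close it is the wrong machinery. There is no ``coset matching'' and no adjustment of $\gamma$ to perform: once $\br{f\circ\alpha}=\br{p\circ\gamma}$, the loops $f\circ\alpha$ and $p\circ\gamma$ are path-homotopic in $Y$, the lift of $p\circ\gamma$ starting at $e_1$ is $\gamma$ itself (ending at $e_1'$), and $\wt{\alpha}$ is the lift of $f\circ\alpha$ starting at $e_1$, so homotopy lifting forces $\wt{\alpha}\pa{1}=\gamma\pa{1}=e_1'$ outright; then $\mu\pa{t}=\pa{\alpha\pa{t},\wt{\alpha}\pa{t}}$ joins $\pa{x_1,e_1}$ to $\pa{x_1,e_1'}$ in $\fa\pa{E}$. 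In particular condition~\eqref{alpha_cond} is not the relevant criterion and invoking it would be harmful: it is precisely the condition making $\wt{\alpha}$ a loop at $e_1$, whereas here $\wt{\alpha}$ must end at the \emph{different} point $e_1'$. So the missing ingredient is this one-line homotopy-lifting conclusion, which you instead flagged as ``the main obstacle.''

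The abandoned first route also contains a genuine error worth naming: surjectivity of $\fs:\pi_1\pa{X,x_1}\to\pi_1\pa{Y,y_1}$ does \emph{not} make $\fs^{-1}\pa{p_{\sharp}\pa{\pi_1\pa{E,e_1}}}$ all of $\pi_1\pa{X,x_1}$; the preimage of a proper subgroup under a surjective homomorphism is proper, with index equal to the index of $p_{\sharp}\pa{\pi_1\pa{E,e_1}}$ in $\pi_1\pa{Y,y_1}$. With that correction, Lemma~\ref{key} plus the coset count is perfectly consistent with connectedness of $\fa\pa{E}$ (the one component has a fiber of exactly that index over $x_1$), so your sense that the conclusion was ``too strong'' was right, but the diagnosis (``$n$ fiber points give $n$ components'') rests on the same false claim. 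In summary: the skeleton of your second argument coincides with the paper's and is sound, but the proposal as it stands has a wrong group-theoretic step in route one and leaves route two uncompleted, proposing \eqref{alpha_cond}/coset bookkeeping where a direct application of homotopy lifting finishes the proof.
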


\begin{proof}[Proof of Lemma~\ref{pb_connected}]
If $\fa\pa{E}$ is empty, the result holds.
So, let $\pa{x_1,e_1}\in\fa\pa{E}$.
Define $y_1:=f\pa{x_1}$.
By Lemma~\ref{comp_bp}, it suffices to show that all points in the set \hbox{$\cpa{x_1}\times p^{-1}\pa{y_1}$} lie in the same path component of $\fa\pa{E}$.
Let $\pa{x_1,e_2}\in \cpa{x_1}\times p^{-1}\pa{y_1}$. As $E$ is path connected, there is path $\alpha$ from $e_1$ to $e_2$ in $E$.
Thus, $p\circ\alpha$ is a loop in $Y$ based at $y_1$.
By hypothesis, there is a loop $\beta$ in $X$ based at $x_1$ such that $\fs\pa{\br{\beta}}=\br{p\circ\alpha}$.
In particular, $f\circ\beta$ and $p\circ\alpha$ are path-homotopic.
By path lifting~\cite[Prop.~1.30]{hatcher}, we get $\wt{\beta}$ a path in $\fa\pa{E}$ beginning at $(x_1,e_1)$ and so $\beta=\fa\pa{p}\circ\wt{\beta}$.
Thus, $\wt{f}\circ\wt{\beta}$ is a lift of $f\circ\beta$ to $E$ beginning at $e_1$. Also, $\alpha$ is a lift of $p\circ\alpha$ to $E$ beginning at $e_1$.
As $f\circ\beta$ and $p\circ\alpha$ are path-homotopic, homotopy lifting~\cite[Prop.~1.30]{hatcher} implies that $\wt{f}\circ\wt{\beta}\pa{1}=\alpha(1)=e_2$.
But, $\wt{f}\circ\wt{\beta}\pa{1}=e_2$ means $\wt{\beta}(1)=(x_1,e_2)$.
The proof is complete.
\end{proof}

\begin{remark}
It is easy to construct examples that show Lemmas~\ref{comp_bp} and~\ref{pb_connected} become false when $\fs:\pi_0\pa{X}\to\pi_0\pa{Y}$ is not injective or $\fs:\pi_1\pa{X,x}\to\pi_1\pa{Y,y}$ is not surjective.
\end{remark}

\begin{lemma}\label{basic_group_iso}
Let $X$ be locally path-connected.
Let $C$ be a component of $X$ and let $x_1\in C$.
Let $p_1:\pa{E_1,e_1}\to \pa{X,x_1}$ and $p_2:\pa{E_2,e_2}\to \pa{X,x_1}$ be covering maps where $E_1$ and $E_2$ are connected and $\im{(p_1)_\sharp}=\im{(p_2)_\sharp}$.
Then, $p_1$ and $p_2$ are isomorphic objects in $\bcov\pa{X,x_1}$.
\end{lemma}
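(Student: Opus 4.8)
The plan is to invoke the lifting criterion for covering spaces (Proposition~1.33 of~\cite{hatcher}) twice, once in each direction, and then to recognize the two resulting maps as mutually inverse. First I would record the local structure of the covers. Since $p_1$ and $p_2$ are local homeomorphisms and $X$ is locally path-connected, the spaces $E_1$ and $E_2$ are locally path-connected; being connected by hypothesis, they are therefore path-connected. (Since covering maps are open and $X$ is locally connected, one also sees via Lemma~\ref{imageclosed} that $p_i\pa{E_i}$ is exactly the component $C$ of $X$ containing $x_1=p_i\pa{e_i}$, so each $p_i$ may be regarded as a covering of $C$ if desired; this is reassuring but not needed below, since the lifting criterion places no connectivity requirement on the base.)

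Next I would construct the comparison morphisms. Applying the lifting criterion to the covering $p_2:\pa{E_2,e_2}\to\pa{X,x_1}$ and the map $p_1:\pa{E_1,e_1}\to\pa{X,x_1}$: as $E_1$ is path-connected and locally path-connected and $(p_1)_\sharp\pa{\pi_1\pa{E_1,e_1}}=(p_2)_\sharp\pa{\pi_1\pa{E_2,e_2}}$, there is a continuous map $\phi:\pa{E_1,e_1}\to\pa{E_2,e_2}$ with $p_2\circ\phi=p_1$. By symmetry there is a continuous map $\psi:\pa{E_2,e_2}\to\pa{E_1,e_1}$ with $p_1\circ\psi=p_2$. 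Now $\tn{id}_{E_1}$ and $\psi\circ\phi$ are both lifts of $p_1:E_1\to X$ through the covering $p_1:E_1\to X$, and they agree at $e_1$ (since $\phi\pa{e_1}=e_2$ and $\psi\pa{e_2}=e_1$); by Lemma~\ref{equalizer_lift} their equalizer is open and closed in $E_1$ and nonempty, so connectedness of $E_1$ forces $\psi\circ\phi=\tn{id}_{E_1}$. Likewise $\phi\circ\psi=\tn{id}_{E_2}$. Hence $\phi$ is a homeomorphism with $p_2\circ\phi=p_1$ and $\phi\pa{e_1}=e_2$, i.e., an isomorphism $p_1\to p_2$ in $\bcov\pa{X,x_1}$.

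I do not expect a genuine obstacle here: the role of the local path-connectivity hypothesis on $X$ is precisely to make $E_1$ and $E_2$ locally path-connected so that the lifting criterion applies, and uniqueness of lifts (Lemma~\ref{equalizer_lift}) together with connectedness does the rest. The only bookkeeping point to keep straight is that the images $(p_1)_\sharp\pa{\pi_1\pa{E_1,e_1}}$ and $(p_2)_\sharp\pa{\pi_1\pa{E_2,e_2}}$ are being compared inside $\pi_1\pa{X,x_1}$, which coincides with $\pi_1\pa{C,x_1}$ since $x_1\in C$; this causes no difficulty.
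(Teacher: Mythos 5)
Your proof is correct, but it follows a somewhat different route than the paper. The paper first restricts $p_1$ and $p_2$ to covering maps $q_1,q_2:E_i\to C$ of the component $C$ (via Corollary~\ref{decompose_cover_componentwise}), then quotes the standard classification statement \cite[Prop.~1.37]{hatcher}---which requires a path-connected, locally path-connected base and gives $q_1\cong q_2$ by a based isomorphism---and finally observes that this isomorphism is also an isomorphism $p_1\to p_2$ over $X$. You instead work directly over the possibly disconnected space $X$, invoking the lifting criterion \cite[Prop.~1.33]{hatcher} in both directions and then unique lifting (Lemma~\ref{equalizer_lift}, i.e.\ \cite[Prop.~1.34]{hatcher}) together with connectedness of $E_1$ and $E_2$ to see that the two lifts are mutually inverse; in effect you inline the proof of \cite[Prop.~1.37]{hatcher} in a setting where the base need not be connected, correctly noting that the lifting criterion imposes connectivity hypotheses only on the source. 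Both arguments are sound: the paper's is shorter because it delegates to the cited classification result after the reduction to $C$, while yours avoids the restriction step entirely and is more self-contained, at the cost of redoing the standard two-lifts-plus-uniqueness argument. Your bookkeeping points (local path-connectedness of $E_1,E_2$ inherited through the local homeomorphisms, hence path-connectedness; the subgroups compared inside $\pi_1\pa{X,x_1}=\pi_1\pa{C,x_1}$; basepoint preservation and the inverse $\psi$ witnessing the isomorphism in $\bcov\pa{X,x_1}$) are all handled correctly.
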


\begin{proof}[Proof of Lemma~\ref{basic_group_iso}]
The restrictions $q_1:E_1\to C$ and $q_2:E_2\to C$ are covering maps by Corollary~\ref{decompose_cover_componentwise}.
Next, $q_1\cong q_2$ by~\cite[Prop.~1.37]{hatcher}.
This yields an isomorphism $p_1\to p_2$.
\end{proof}

\subsection{Generalizing Quillen's~\ref{q2}}\label{ss:gq2}

The following is our generalization of~\ref{q2}.

\begin{proposition}\label{gq2}
Fix a category of coverings.
Let $f:X\to Y$ be a map where $X$ and $Y$ are locally path-connected and $Y$ is semilocally simply-connected.
Then, $\fs:\pi_0\pa{X}\to\pi_0\pa{Y}$ is a bijection and $\fs:\pi_1\pa{X,x}\to\pi_1\pa{Y,f\pa{x}}$ is surjective for each $x\in X$ if and only if $\fa$ is full and faithful.
\end{proposition}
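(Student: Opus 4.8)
We begin with the forward implication. Assume $\fs\colon\pi_0\pa{X}\to\pi_0\pa{Y}$ is a bijection and $\fs\colon\pi_1\pa{X,x}\to\pi_1\pa{Y,f\pa{x}}$ is surjective for every $x$. Local connectivity of $Y$ together with Proposition~\ref{gq1} immediately gives that $\fa$ is faithful. For fullness, fix objects $p_1\colon E_1\to Y$, $p_2\colon E_2\to Y$ and a morphism $s\in\h{\fa\pa{E_1},\fa\pa{E_2}}$; the goal is to produce $t\in\h{E_1,E_2}$ with $\fa\pa{t}=s$. Write $E_1=\bigsqcup_\alpha C_\alpha$ and $E_2=\bigsqcup_\beta D_\beta$ for the component decompositions (open and closed, as the $p_i$ are local homeomorphisms and $Y$ is locally connected). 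By Proposition~\ref{pbi}, $\fa\pa{E_1}=\bigsqcup_\alpha\fa\pa{C_\alpha}$; each $\fa\pa{C_\alpha}$ is nonempty since $\fs$ is surjective on $\pi_0$ and $p_1\pa{C_\alpha}$ is a component of $Y$ (Lemma~\ref{imageclosed}), and each $\fa\pa{C_\alpha}$ is connected by Lemma~\ref{pb_connected}. So $s$ maps each $\fa\pa{C_\alpha}$ into a single component $\fa\pa{D_{\beta\pa{\alpha}}}$ of $\fa\pa{E_2}$, and it suffices to lift $s$ one component at a time.

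Fix $\alpha$, pick $\pa{x_1,e_1}\in\fa\pa{C_\alpha}$ (the basepoint, in the based categories), and set $\pa{x_1,e_1'}:=s\pa{x_1,e_1}\in\fa\pa{D_{\beta\pa{\alpha}}}$. Basing $X,Y$ at $x_1,f\pa{x_1}$, Lemma~\ref{key} gives $\fa\pa{p_1}_\sharp\pi_1\pa{\fa\pa{C_\alpha},\pa{x_1,e_1}}=\fs^{-1}\pa{\pa{p_1}_\sharp\pi_1\pa{C_\alpha,e_1}}$ and the analogous identity for $D_{\beta\pa{\alpha}}$. Since $s$ restricts to a morphism of these two connected coverings of $X$, one has $\fa\pa{p_1}_\sharp\pi_1\pa{\fa\pa{C_\alpha}}\subset\fa\pa{p_2}_\sharp\pi_1\pa{\fa\pa{D_{\beta\pa{\alpha}}}}$; because $\fs$ is surjective on $\pi_1$, the operation $\fs^{-1}$ reflects inclusions, so $\pa{p_1}_\sharp\pi_1\pa{C_\alpha,e_1}\subset\pa{p_2}_\sharp\pi_1\pa{D_{\beta\pa{\alpha}},e_1'}$. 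Now $C_\alpha$ and $D_{\beta\pa{\alpha}}$ are connected (hence path-connected, being locally path-connected) coverings of the component $p_1\pa{C_\alpha}$ of $Y$, so the lifting criterion~\cite[Prop.~1.33]{hatcher} yields a unique $t_\alpha\colon C_\alpha\to D_{\beta\pa{\alpha}}$ over $Y$ with $t_\alpha\pa{e_1}=e_1'$. Then $\fa\pa{t_\alpha}$ and $s|_{\fa\pa{C_\alpha}}$ are morphisms of the connected covering $\fa\pa{C_\alpha}$ agreeing at $\pa{x_1,e_1}$, hence equal by Corollary~\ref{equalizer_morphisms}. Defining $t\colon E_1\to E_2$ by $t|_{C_\alpha}:=t_\alpha$ (continuous, as the $C_\alpha$ are open) gives $\fa\pa{t}=s$; moreover $t$ is based whenever $s$ is, and if $s$ is surjective then each $D_\beta$ equals some $D_{\beta\pa{\alpha}}$ (because $\fa\pa{D_\beta}$ is nonempty and met by $s$) while each $t_\alpha$ is a covering map onto the connected space $D_{\beta\pa{\alpha}}$ by Lemma~\ref{morphismiscover}, hence surjective, so $t$ is surjective. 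Thus $\fa$ is full in all four categories.

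We turn to the backward implication. Assume $\fa$ is full and faithful. Faithfulness and Proposition~\ref{gq1} give that $\fs\colon\pi_0\pa{X}\to\pi_0\pa{Y}$ is surjective. If it were not injective, then applying $\fa$ to the trivial objects $\pr{1}\colon Y\times S\to Y$ with $\card{S}\in\cpa{2,3}$ (the larger size to leave a basepoint in the based surjective category) would not be surjective on $\h{Y\times S,Y\times S}$: a morphism between trivial covers over a locally connected space is given by a self-map of $S$ on each component, and under this description (via Lemma~\ref{trivialcover}) $\fa$ becomes precomposition with $\fs\colon\Gamma\pa{X}\to\Gamma\pa{Y}$, which---being already surjective on components---is surjective on Hom-sets only when it is injective; this would contradict fullness. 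Hence $\fs$ is a bijection on $\pi_0$, and in particular $f^{-1}\pa{D}$ is a single component of $X$ for every component $D$ of $Y$. Finally, suppose $\fs\colon\pi_1\pa{X,x_1}\to\pi_1\pa{Y,f\pa{x_1}}$ fails to be surjective, with image $H\subsetneq G:=\pi_1\pa{Y,f\pa{x_1}}$. Restricting attention to the component $D$ of $Y$ through $f\pa{x_1}$ and to $C_1:=f^{-1}\pa{D}$---extending coverings of $D$ to coverings of $Y$, emptily for $\cov$ and $\bcov$ and by one-sheeted trivial covers for $\scov$ and $\bscov$, which identifies the relevant Hom-sets and identifies $\fa$ with the pullback functor of $f|_{C_1}\colon C_1\to D$---one reduces to $X,Y$ connected. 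As $Y$ is locally path-connected and semilocally simply-connected, there is a connected covering $p_H\colon E_H\to Y$ with $\pa{p_H}_\sharp\pi_1\pa{E_H}=H$. By Lemma~\ref{key} the component of $\fa\pa{E_H}$ through the basepoint maps homeomorphically onto $X$ (its $\pi_1$-image is $\fs^{-1}\pa{H}=\pi_1\pa{X,x_1}$), so $\fa\pa{p_H}$ admits a section, whereas $p_H$ admits none (a section would force $H=G$). For $\cov$ and $\bcov$ this contradicts fullness, since $\fa$ would carry the empty set $\h{\tn{id}_Y,p_H}$ onto a nonempty one. For $\scov$ and $\bscov$ the same obstruction must be realized by surjective objects and morphisms: one keeps $E_H$ but replaces $\tn{id}_Y$ by a covering of $Y$ whose pullback reproduces the full component structure of $\fa\pa{E_H}$, so that the downstairs section extends to a surjective morphism lying outside the image of $\fa$. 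Constructing this companion cover---delicate precisely because $\fa\pa{E_H}$ is disconnected when $H$ is not normal in $G$---is the step I expect to be the main obstacle; the remainder is routine bookkeeping with Lemmas~\ref{key}, \ref{pb_connected}, \ref{morphismiscover} and Proposition~\ref{pbi}.
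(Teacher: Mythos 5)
Your forward implication, and the $\pi_0$ portion of the backward implication, are essentially the paper's own argument and are sound: you decompose $E_1$ into components, use Lemma~\ref{imageclosed} and $\pi_0$-surjectivity to see each $\fa\pa{C_\alpha}$ is nonempty, Lemma~\ref{pb_connected} for connectivity, Lemma~\ref{key} plus surjectivity of $\fs$ on $\pi_1$ to get the subgroup inclusion, the lifting criterion to produce $t_\alpha$, and Corollary~\ref{equalizer_morphisms} to identify $\fa\pa{t_\alpha}$ with $\rest{s}$; the swap of one component of a trivial two-sheeted cover is likewise the paper's device for $\pi_0$-injectivity.

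The genuine gap is in the $\pi_1$-surjectivity step of the backward implication, and you name it yourself. Your obstruction is the comparison of $\h{\tn{id}_Y,p_H}=\emptyset$ with the nonempty set of sections of $\fa\pa{p_H}$ supplied by the one-sheeted component through the basepoint. This settles $\cov$, but two of the four categories are not handled: (i) for $\bcov$ your reduction ``extend coverings of $D$ emptily'' is not legitimate when $y_0\notin D$, since an emptily extended cover has no point over $y_0$ and hence is not an object of $\bcov\bs{Y}$, nor can its morphisms be based (one must instead adjoin a trivial sheet over the rest of $Y$ and base there); and (ii) for $\scov$ and $\bscov$ you explicitly leave unconstructed the ``companion cover'' that would convert the section obstruction into a surjective morphism outside the image of $\fa$, so the proof is incomplete exactly where you predict trouble. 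The paper avoids sections altogether: with $p$ the cover of $Y$ corresponding to $\im{\fs}$ (extended over $Y$ by Lemma~\ref{cover_component}), it applies fullness to a self-morphism $s\in\h{\fa\pa{p},\fa\pa{p}}$, namely the fiber-collapse $\pa{x,e_i}\mapsto\pa{x,e_1}$ under the identification $\fa\pa{E}\cong C\times F$; any $t\in\h{p,p}$ with $\fa\pa{t}=s$ fixes $e_1$, hence equals the identity by Corollary~\ref{equalizer_morphisms} and connectivity of $E$, contradicting $\card{F}\geq 2$. The based and surjective categories are then treated uniformly by adjoining a disjoint trivial one-sheeted cover of $Y$, basing at its point over $y_0$, and running the same rigidity argument with a suitable (surjective, based) self-morphism of the enlarged object. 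Some device of this kind---working with self-morphisms of a single object rather than with $\h{\tn{id}_Y,p_H}$---is what your argument still needs to cover $\scov$ and $\bscov$.
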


\begin{proof}[Proof of Proposition~\ref{gq2}]
We begin with the backward implication.
Corollary~\ref{gq1_pc} implies that $\fs:\pi_0\pa{X}\to\pi_0\pa{Y}$ is surjective.
Suppose, by way of contradiction, that \hbox{$\fs:\pi_0\pa{X}\to\pi_0\pa{Y}$} is not injective.
Let $C_1$ and $C_2$ be distinct path components of $X$ that are sent by $f$ into the same path component, $B$, of $Y$.
For the based categories, we can and do interchange the roles of $C_1$ and $C_2$ if necessary so that $x_0\notin C_1$.
Let $p$ denote the trivial object $\pr{1}:Y\times\cpa{1,2}\to Y$ (base at $\pa{y_0,1}$).
Let $q$ denote the trivial object $\pr{1}:X\times\cpa{1,2}\to X$ (base at $\pa{x_0,1}$).
By Lemma~\ref{trivialcover}, $\varphi:\fa\pa{p}\to q$ given by $\pa{x,\pa{f(x),d}}\mapsto\pa{x,d}$ is an isomorphism.
As $X$ is locally path-connected, the function $\sigma:X\times\cpa{1,2}\to X\times\cpa{1,2}$ which swaps the components $C_1\times\cpa{1}$ and $C_1\times\cpa{2}$, and is the identity otherwise, is an isomorphism.
So, $s:=\varphi^{-1}\circ\sigma\circ\varphi$ is a isomorphism in $\h{\fa\pa{p},\fa\pa{p}}$.
As $\fa$ is full, there exists $t\in\h{p,p}$ such that $\fa\pa{t}=s$.
But, $\sigma$ is the identity on $C_2\times\cpa{1}$, so Corollary~\ref{equalizer_morphisms} implies that $t$ is the identity on $B\times\cpa{1}$.
Thus, $\sigma$ is the identity on $C_1\times\cpa{1}$, a contradiction.
Hence, \hbox{$\fs:\pi_0\pa{X}\to\pi_0\pa{Y}$} is injective.\\

Suppose, by way of contradiction, that $\fs:\pi_1\pa{X,x_1}\to\pi_1\pa{Y,y_1}$ is not surjective where $x_1\in X$ and $y_1:=f\pa{x_1}$.
Let $C$ be the component of $X$ containing $x_1$ and let $B$ be the component of $Y$ containing $y_1$.
Let $g:\pa{E,e_1}\to\pa{B,y_1}$ be a connected cover such that $\im{g_{\sharp}}=\im{\fs}$ (see~\cite[pp.~66--68]{hatcher}).
Assume first that the category of coverings is $\cov$.
By Lemma~\ref{cover_component}, $p:E\to Y$ given by $p(e):=g(e)$ is an object.
Consider the object $\fa\pa{p}:\fa\pa{E}\to X$.
Define $F:=p^{-1}\pa{y_1}$ and note that $\card{F}\geq2$ since $\fs:\pi_1\pa{X,x_1}\to\pi_1\pa{Y,y_1}$ is not surjective.
Let $q$ be the object $\pr{1}:C\times F\to X$ (this is a covering map by Lemma~\ref{cover_component}).
Components of $\fa\pa{E}$ are in bijective correspondence with $F$ by Lemma~\ref{comp_bp} and~\cite[Prop.~1.31]{hatcher}.
If $Z$ is a component of $\fa\pa{E}$, then the based object $\pa{Z,\pa{x_1,e_i}}\to \pa{C,x_1}$ is based isomorphic to the trivial object $\tn{id}:\pa{C,x_1}\to\pa{C,x_1}$ by Lemma~\ref{key}.
It follows that $\fa\pa{p}\cong q$ (unbased), say by $\psi:\fa\pa{E}\to C\times F$.
We have the morphism $\sigma:C\times F\to C\times F$ given by $\pa{x,e_i}\mapsto\pa{x,e_1}$.
Thus, $s:=\psi^{-1}\circ\sigma\circ\psi$ is a morphism in $\h{\fa\pa{p},\fa\pa{p}}$.
As $\fa$ is full, there exists $t\in\h{p,p}$ such that $\fa\pa{t}=s$.
As $\sigma\pa{x_1,e_1}=\pa{x_1,e_1}$, we get $t\pa{e_1}=e_1$.
As $E$ is connected, Corollary~\ref{equalizer_morphisms} implies $t$ is the identity.
Thus, $\fa\pa{t}$ is the identity, a contradiction (since $\card{F}\geq2$).
Hence, $\fs:\pi_1\pa{X,x}\to\pi_1\pa{Y,y}$ is surjective for each $x\in X$.\\

The argument in the previous paragraph adapts readily to the based and surjective categories.
For all three categories, consider the (extrinsic) disjoint union of $p$ and a trivial one-sheeted cover of $Y$.
Base at the unique point above $y_0$ in the added trivial cover.
Now, the same argument applies. This completes the proof of the backward implication.\\

For the forward implication, Corollary~\ref{gq1_pc} implies that $\fa$ is faithful.
To show $\fa$ is full, let $p_1:E_1\to Y$ and $p_2:E_2\to Y$ be objects.
Let $s\in\h{\fa\pa{E_1},\fa\pa{E_2}}$.
We have the commutative diagram:
\begin{equation}\begin{split}\label{seek_t}
\xymatrix{
    \fa\pa{E_{1}}	\ar[rrr]	\ar[rd]^-{\fa\pa{p_1}}	\ar[dd]_{s}	&	&	&	E_{1}  \ar[dl]_{p_{1}} \ar@{-->}[dd]^{t}\\
    &	X	\ar[r]^{f}																																								&	Y\\
    \fa\pa{E_{2}}	\ar[rrr]	\ar[ru]_-{\fa\pa{p_2}}															&	&	&	E_{2}  \ar[ul]^{p_{2}}}
\end{split}\end{equation}
We seek $t\in\h{E_1,E_2}$ such that $\fa\pa{t}=s$.
It suffices to specify $t$ on each component of $E_1$.
So, fix a component $C_1$ of $E_1$.
By Lemma~\ref{imageclosed}, $p_1\pa{C_1}$ is a component of $Y$.
As $\fs:\pi_0\pa{X}\to\pi_0\pa{Y}$ is surjective, there exists $x_1\in X$ such that $y_1:=f\pa{x_1}\in p_1\pa{C_1}$.
Let $c_1\in p_1^{-1}\pa{y_1}\cap C_1$.
Thus, $\pa{x_1,c_1}\in\fa\pa{C_1}$.
Lemma~\ref{inverseopenclosed} implies $\fa\pa{C_1}$ is open and closed in $\fa\pa{E_1}$, and Lemma~\ref{pb_connected} implies $\fa\pa{C_1}$ is connected.
Now, $s\pa{x_1,c_1}=\pa{x_1,c_2}$ for some $c_2\in E_2$, and so $p_2\pa{c_2}=y_1$.
Let $C_2$ be the component of $E_2$ containing $c_2$.
By Lemma~\ref{imageclosed}, $p_2\pa{C_2}$ is a component of $Y$.
As $y_1$ lies in $p_1\pa{C_1}$ and $p_2\pa{C_2}$, we get $p_1\pa{C_1}=p_2\pa{C_2}$.
Two applications of Lemma~\ref{key} and commutativity of the triangle in~\eqref{seek_t} imply that:
\begin{equation}\label{temp_f_sharp}
	f_\sharp^{-1}\pa{\pa{p_1}_\sharp\pa{\pi_1\pa{C_1,c_1}}}\subset f_\sharp^{-1}\pa{\pa{p_2}_\sharp\pa{\pi_1\pa{C_2,c_2}}}.
\end{equation}
As $\fs:\pi_1\pa{X,x_1}\to\pi_1\pa{Y,y_1}$ is surjective, equation~\eqref{temp_f_sharp} implies that:
\[
	\pa{p_1}_\sharp\pa{\pi_1\pa{C_1,c_1}}\subset \pa{p_2}_\sharp\pa{\pi_1\pa{C_2,c_2}}.
\]
Therefore, there is a unique lift $\tau:\pa{C_1,c_1}\to\pa{C_2,c_2}$ of $\rest{p_1}C_1$ to $C_2$.
In particular, $\rest{p_1}C_1=\pa{\rest{p_2}C_2}\circ\tau$.
Lemma~\ref{morphismiscover} implies that $\tau$ is a covering map.
Lemma~\ref{imageclosed} implies that $\tau$ is surjective.
Lemma~\ref{pullbacktriangle} implies that $\fa\pa{\tau}:\fa\pa{C_1}\to\fa\pa{C_2}$ is surjective.
Evidently, $\fa\pa{\tau}$ and $\rest{s}\fa\pa{C_1}$ agree at $\pa{x_1,c_1}$.
As $\fa\pa{C_1}$ is connected, Corollary~\ref{equalizer_morphisms} implies that $\fa\pa{\tau}=\rest{s}\fa\pa{C_1}$.
Hence, the following diagram commutes.
\begin{equation}\begin{split}\label{tau_diag}
\xymatrix{
    \fa\pa{C_{1}}	\ar[rrr]	\ar[rd]^-{\rest{\fa\pa{p_1}}}	\ar[dd]_{\fa\pa{\tau}=\rest{s}}	&	&	&	C_{1}  \ar[dl]_{\rest{p_{1}}} \ar[dd]^{\tau}\\
    &	X	\ar[r]^{f}																																								&	Y\\
    \fa\pa{C_{2}}	\ar[rrr]	\ar[ru]_-{\rest{\fa\pa{p_2}}}															&	&	&	C_{2}  \ar[ul]^{\rest{p_{2}}}}
\end{split}\end{equation}
Define $\rest{t}C_1:=\tau$.
Evidently, diagram~\eqref{seek_t}, with $t$ included, commutes, and $\fa\pa{t}=s$.
If $s$ is surjective, then $t$ is surjective.
If the data are based, then $t$ respects basepoints.
Thus, $t\in\h{E_1,E_2}$.
This completes the proof of Proposition~\ref{gq2}.
\end{proof}

\subsection{Generalizing Quillen's~\ref{q3}}\label{ss:gq3}

The following is our generalization of~\ref{q3}.

\begin{proposition}\label{gq3}
Fix a category of coverings.
Let $f:X\to Y$ be a map where $X$ and $Y$ are locally path-connected and semilocally simply-connected.
Then, \hbox{$\fs:\pi_0\pa{X}\to\pi_0\pa{Y}$} is a bijection and $\fs:\pi_1\pa{X,x}\to\pi_1\pa{Y,f\pa{x}}$ is an isomorphism for each $x\in X$ if and only if $\fa$ is an equivalence of categories.
\end{proposition}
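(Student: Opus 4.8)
The plan is to lean on Proposition~\ref{gq2} together with the fundamental-group lemmas of Section~\ref{ss:pb_fg}. Recall that a functor is an equivalence of categories precisely when it is full, faithful, and essentially surjective. In each implication, Proposition~\ref{gq2} disposes of ``full and faithful'' as soon as $\fs$ is a $\pi_0$-bijection that is $\pi_1$-surjective, so the only genuinely new content is to match injectivity of $\fs$ on $\pi_1$ with essential surjectivity of $\fa$.

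\emph{Backward implication.} If $\fa$ is an equivalence it is full and faithful, so Proposition~\ref{gq2} gives that $\fs:\pi_0\pa{X}\to\pi_0\pa{Y}$ is a bijection and $\fs:\pi_1\pa{X,x}\to\pi_1\pa{Y,f\pa{x}}$ is surjective for each $x$; we must upgrade surjectivity to injectivity. Suppose instead that $\fs:\pi_1\pa{X,x_1}\to\pi_1\pa{Y,y_1}$ has nontrivial kernel $N$, and let $C$ be the component of $X$ through $x_1$. Let $q$ be the extrinsic disjoint union of the universal cover $\wt{C}\to C$, viewed as a cover of $X$ via Lemma~\ref{cover_component}, with the identity cover $X\to X$, based (in the based categories) at the point of the second summand over $x_0$; then $q$ is an object of the chosen category. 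By essential surjectivity, $q\cong\fa\pa{p}$ for some object $p:E\to Y$. Since $\fs$ is surjective on $\pi_0$, $\im{f}$ meets every component of $Y$, so by Lemma~\ref{imageclosed} and Proposition~\ref{pbi} the pullback of each component of $E$ is nonempty, and by Lemma~\ref{pb_connected} it is connected; hence the components of $\fa\pa{E}$ are exactly these pieces, and one of them, $Z$ say, is the summand of $q$ isomorphic to $\wt{C}$. By Lemma~\ref{comp_bp}, $\fa\pa{p}$ restricts on $Z$ to a connected cover of a single component of $X$, which must be $C$; picking a point $z_1\in Z$ over some $x_1'\in C$, Lemma~\ref{key} shows the $\pi_1$-image of $Z$ at $z_1$ equals $\fs^{-1}$ of a subgroup of $\pi_1\pa{Y,f\pa{x_1'}}$, hence contains the kernel of $\fs$ at $x_1'$, a conjugate of $N$. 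But $Z\cong\wt{C}$ is simply connected, so this image is trivial; thus $N=\cpa{1}$, a contradiction. Therefore $\fs$ is an isomorphism on each $\pi_1$.

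\emph{Forward implication.} Suppose $\fs$ is a bijection on $\pi_0$ and an isomorphism on each $\pi_1$. These maps are surjective, so Proposition~\ref{gq2} makes $\fa$ full and faithful; it remains to show $\fa$ is essentially surjective. Let $q:E'\to X$ be an object and write $E'=\bigsqcup_{i\in I}C_i$ as the disjoint union of its components. For each $i$ choose $c_i\in C_i$, set $x_i:=q\pa{c_i}$, let $D_i$ be the component of $X$ through $x_i$ (equal to $q\pa{C_i}$ by Lemma~\ref{imageclosed}), $B_i$ the component of $Y$ through $f\pa{x_i}$, and $H_i\subset\pi_1\pa{X,x_i}$ the image of $\pi_1\pa{C_i,c_i}$. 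Since $B_i$ is locally path-connected and semilocally simply-connected there is a connected based cover $p_i':\pa{D_i',d_i}\to\pa{B_i,f\pa{x_i}}$ with image $\fs\pa{H_i}$; extend it to an object $p_i:D_i'\to Y$ by Lemma~\ref{cover_component}. Injectivity of $\fs$ on $\pi_0$ forces $\fa\pa{D_i'}$ to lie over $D_i$ only, Lemma~\ref{pb_connected} makes $\fa\pa{D_i'}$ connected, and Lemma~\ref{comp_bp} with Lemma~\ref{key} identify $\fa\pa{p_i}$ with a connected cover of $D_i$ whose $\pi_1$-image at a point over $x_i$ is $\fs^{-1}\pa{\fs\pa{H_i}}=H_i$, using injectivity of $\fs$ on $\pi_1$. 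Hence Lemma~\ref{basic_group_iso} gives $\fa\pa{p_i}\cong\rest{q}C_i$ (as based covers over $D_i$ when basepoints are present, choosing $c_i$ to be the basepoint of $E'$ for the relevant index). Now set $E:=\coprod_{i\in I}D_i'$ and $p:=\coprod_{i\in I}p_i$: Lemma~\ref{union_covers} makes $p:E\to Y$ a covering map, Proposition~\ref{pbe} gives $\fa\pa{E}\cong\coprod_{i}\fa\pa{D_i'}$, and Lemma~\ref{isodu} gives $\coprod_{i}\fa\pa{D_i'}\cong\coprod_{i}C_i\cong E'$; tracking the basepoint through this chain yields $\fa\pa{p}\cong q$ in the chosen category. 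For $\scov$ and $\bscov$, surjectivity of $q$ forces every component of $X$, hence (via the $\pi_0$-bijection) every component of $Y$, to occur as some $B_i$, so $p$ is surjective; for the based categories the basepoint of $E$ is taken in the copy of $D_i'$ covering the component of $X$ through $x_0$. Thus $\fa$ is essentially surjective, hence an equivalence.

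I expect the main obstacle to be the essential-surjectivity half of the forward implication: reconstructing a cover $p$ of $Y$ from $q$ component-by-component and certifying $\fa\pa{p}\cong q$ forces one to invoke, in concert, Lemmas~\ref{key}, \ref{comp_bp}, \ref{pb_connected}, and~\ref{basic_group_iso}, Propositions~\ref{pbi} and~\ref{pbe}, and the disjoint-union bookkeeping of Lemmas~\ref{isodu} and~\ref{union_covers}; and for $\bcov$ and $\bscov$ one must carry a basepoint coherently through the entire chain of isomorphisms.
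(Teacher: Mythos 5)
Your proof is correct and takes essentially the same route as the paper's: both directions reduce to Proposition~\ref{gq2}; the backward implication derives the contradiction from the universal cover of a component of $X$ (viewed as an object over $X$) via Lemma~\ref{key}; and the forward implication rebuilds a cover of $Y$ component-by-component from the subgroups $\fs\pa{H_i}$ using Lemmas~\ref{key}, \ref{comp_bp}, \ref{pb_connected}, \ref{basic_group_iso}, \ref{cover_component}, \ref{union_covers}, \ref{isodu} and Proposition~\ref{pbe}. The only differences are cosmetic, e.g.\ you adjoin a trivial one-sheeted summand in all four categories where the paper does so only in the surjective and based cases.
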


\begin{proof}[Proof of Proposition~\ref{gq3}]
Recall the well known characterization: a functor is an equivalence of categories if and only it is full, faithful, and essentially surjective~\cite[p.~93]{maclane}.\\

We begin with the backwards implication. By Proposition~\ref{gq2}, it remains to prove $\fs:\pi_1\pa{X,x}\to\pi_1\pa{Y,f\pa{x}}$ is injective for each $x\in X$.
Suppose, by way of contradiction, that $\fs:\pi_1\pa{X,x_1}\to\pi_1\pa{Y,y_1}$ is not injective where $y_1:=f\pa{x_1}$.
Let $C$ be the component of $X$ containing $x_1$.
Let $q:\pa{Z,z_1}\to\pa{C,x_1}$ be a connected and simply-connected covering (here we use semilocal simple-connectedness of $X$).
By Lemma~\ref{cover_component}, $p:\pa{Z,z_1}\to\pa{X,x_1}$ is a covering map where $p(z):=q(z)$.
By Lemma~\ref{key}, there is no object over $Y$ whose pullback is isomorphic to $p$.
But, this contradicts the hypothesis that $\fa$ is essentially surjective.
For the surjective and based categories, consider the (extrinsic) disjoint union of $p$ and a trivial one-sheeted cover of $Y$.
Base at the unique point above $x_0$ in the trivial cover.
Now, the same argument applies.
This completes the proof of the backward implication.\\

Next, we prove the forward implication. By Proposition~\ref{gq2}, it remains to prove $\fa$ is essentially surjective.
Let $p:Z\to X$ be an object.
Then, $Z=\bigsqcup_{i\in I} Z_i$ is an intrinsic disjoint of its components (below, it is more convenient to index by $i\in I$ rather than by $Z_i\in\pi_0\pa{Z}$).
Fix a component $Z_i$, $i\in I$, of $Z$.
By Lemma~\ref{imageclosed}, $X_i:=p\pa{Z_i}$ is a component of $X$.
Let $z_i \in Z_i$ and define $x_i:=p\pa{z_i}$.
By Corollary~\ref{decompose_cover_componentwise}, the restriction $p_i: \pa{Z_i,z_i} \to \pa{X_i,x_i}$ of $p$ is a (based and surjective) covering map,
and the restriction $P_i:\pa{Z_i,z_i} \to \pa{X,x_i}$ of $p$ is a (based) covering map.
Define $y_i:=f\pa{x_i}$.
To the subgroup:
\[
 \fs\pa{  \pa{p_i}_\sharp \pa{\pi_1\pa{Z_i,z_i}}} \subset \pi_1\pa{Y_i,y_i}
\]
there corresponds a connected cover $q_i:\pa{E_i,e_i}\to\pa{Y_i,y_i}$.
By Lemma~\ref{cover_component}, $Q_i:\pa{E_i,e_i}\to\pa{Y,y_i}$ is a covering map where $Q_i\pa{e}:=q_i\pa{e}$.
By hypothesis, $\fs:\pi_1\pa{X,x_i}\to\pi_1\pa{Y,y_i}$ is an isomorphism.
Lemma~\ref{pb_connected} implies that $\fa\pa{E_i}$ is connected.
Lemmas~\ref{key} and~\ref{basic_group_iso} imply that $\fa\pa{Q_i}$ and $P_i$ are isomorphic objects in $\bcov\pa{X,x_i}$.
All disjoint unions are over $i\in I$.
Define $E:=\coprod E_i$ and $Q:=\coprod Q_i$.
By Lemma~\ref{union_covers}, $Q:E\to Y$ is a covering map.
For the category $\cov$, we have:
\begin{equation}\label{string_isos}
	\fa\pa{Q}= \fa\pa{\coprod Q_i}	\cong \coprod \fa\pa{Q_i} \cong \coprod P_i \cong p
\end{equation}
where the first and second isomorphisms follow by Lemmas~\ref{pbe} and~\ref{isodu} respectively, and the last isomorphism is trivial since $Z=\bigsqcup Z_i$.
If $p$ is surjective, then $Q$ is surjective.
If the data $f:\bs{X}\to \bs{Y}$ and $p:\bs{Z}\to \bs{X}$ are based, then let $Z_0$ denote the component of $Z$ containing $z_0$ and, naturally, base $E$ at $\pa{e_0,0}$.
Thus, \eqref{string_isos} holds in all four categories of coverings.
The proof of Proposition~\ref{gq3} is complete.
\end{proof}

The previous proof used semilocal simple-connectedness of $X$ only in the backward implication to deduce that $\fs:\pi_1\pa{X,x}\to\pi_1\pa{Y,f\pa{x}}$ is injective for each $x\in X$.
This implication does not hold in general when $X$ is not semilocally simply-connected, as shown by the following example.

\begin{example}\label{ha_ex}
Let $X$ denote the \emph{Harmonic archipelago}, an interesting noncompact subspace of $\R^3$ discovered by Bogley and Sieradski~\cite[pp.~6--7]{bogley_sieradski} and defined as follows.
Consider the unit disk $D$ in $\R^2\times\cpa{0}$ containing a nice copy of the Hawaiian earring (see Example~\ref{he_ex} above) with wild point $x_0:=\pa{-1,0,0}$.
For each pair of successive circles $C_n$ and $C_{n+1}$ in the Hawaiian earring, let $D_n$ be a nice round subdisk of $D$ between $C_n$ and $C_{n+1}$ and centered on the $x$-axis.
\begin{figure}[h!]
    \centerline{\includegraphics[scale=0.70]{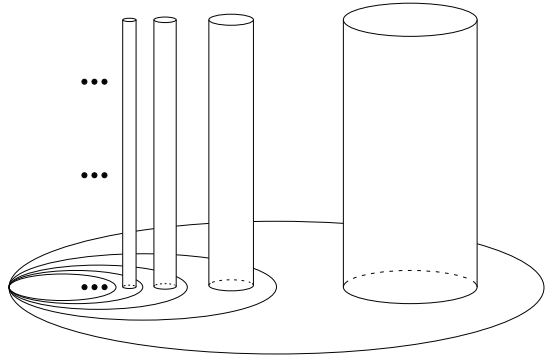}}
    \caption{Harmonic archipelago  $X\subset\R^3$.}
    \label{harm_arch}
\end{figure}
Replace $D_n$ with a parallel copy of $D_n$, raised up a fixed height $h>0$, and include the vertical annulus stretching between their boundaries (see~Figure~\ref{harm_arch}).
The Harmonic archipelago $\bs{X}$ is the resulting based space.
It is not difficult to verify that $X$ is not semilocally simply-connected at $x_0$, and, nonetheless, all coverings of $X$ are trivial.
Let $Y:=\cpa{y_0}$ be a point.
Let $f:\bs{X}\to\bs{Y}$ be the constant map.
Thus, $\fa$ is an equivalence of categories, although $\fs:\pi_1\bs{X}\to\pi_1\bs{Y}$ is not injective.
\end{example}


\begin{thebibliography}{9}

\bibitem[BN02]{barnatan}
	D.~Bar-Natan,
	\emph{Covering spaces done right},
	notes, available at \href{http://www.math.toronto.edu/drorbn/classes/0102/AlgTop/CoveringsDoneRight.pdf}{\curl{http://www.math.toronto.edu/drorbn/classes/0102/AlgTop/CoveringsDoneRight.pdf}} (2002), 2 pp.


\bibitem[BS98]{bogley_sieradski}
	W.A.~Bogley and A.J.~Sieradski,
	\emph{Universal path spaces},
	preprint, available at \href{http://people.oregonstate.edu/~bogleyw/research/ups.pdf}{\curl{http://people.oregonstate.edu/\textasciitilde bogleyw/research/ups.pdf}} (1998), 50 pp.

\bibitem[GZ67]{gabriel_zisman}
	P.~Gabriel and M.~Zisman,
	\emph{Calculus of fractions and homotopy theory},
	Springer-Verlag, New York, 1967.

\bibitem[Har77]{hartshorne}
	R.~Hartshorne,
	\emph{Algebraic geometry},
	Springer-Verlag, New York, 1977.

\bibitem[Hat02]{hatcher}
	A.~Hatcher,
	\emph{Algebraic topology},
	Cambridge University Press, Cambridge, 2002.

\bibitem[Hir94]{hirsch}
	M.W.~Hirsch,
	\emph{Differential topology},
	corrected reprint of the 1976 original,
  Springer-Verlag, New York, 1994.

\bibitem[JS91]{joyal_street}
	A.~Joyal and R.~Street,
	\emph{An introduction to Tannaka duality and quantum groups},
	in Lecture Notes in Math. \textbf{1488},
	Springer, Berlin, 1991, 413--492.

\bibitem[Mac98]{maclane}
	S.~Mac Lane,
	\emph{Categories for the working mathematician},
	Second edition,
	Springer-Verlag, New York, 1998.


\bibitem[M{\o}l11]{moller}
	J.~M{\o}ller,
	\emph{The fundamental group and covering spaces},
	notes, available at \href{http://arxiv.org/abs/1106.5650}{\curl{http://arxiv.org/abs/1106.5650}} (2011), 31 pp.

\bibitem[Qui78]{quillen}
	D.~Quillen,
	\emph{Homotopy properties of the poset of nontrivial {$p$}-subgroups of a group},
	Adv. in Math. \textbf{28} (1978), 101--128.

\bibitem[Spa81]{spanier}
	E.H.~Spanier,
	\emph{Algebraic topology},
	corrected reprint, Springer-Verlag, New York, 1981.


\bibitem[Ste99]{steenrod}
	N.~Steenrod,
	\emph{The topology of fibre bundles},
	reprint of the 1957 ed.,
	Princeton University Press,
	Princeton, NJ, 1999.

\bibitem[Sza09]{szamuely}
	T.~Szamuely,
	\emph{Galois groups and fundamental groups},
	Cambridge University Press, Cambridge, 2009.

\end{thebibliography}
\end{document}